\newtheorem{theorem}{Theorem}[section]
\newtheorem{proposition}[theorem]{Proposition}
\newtheorem{lemma}[theorem]{Lemma}
\newtheorem{corollary}[theorem]{Corollary}
\newtheorem{definition}{Definition}
\newtheorem{example}{Example}[section]
\newtheorem{remark}{Remark}[section]
\newcommand{\Desc}[2]{\State \makebox[2em][l]{#1}#2}
\title{Computing singular simplicial homologies of digraphs and quivers}
\author[1]{Matthew Burfitt}
\author[1]{Jie Wu}
\author[1,2]{Stephen S.-T. Yau}
\author[3]{Shing-Tung Yau}
\affil[1]{Beijing Key Laboratory of Topological Statistics and Applications for Complex Systems, Beijing Institute of Mathematical Sciences and Applications (BIMSA)}
\affil[2]{Department of Mathematical Sciences, Tsinghua University}
\affil[3]{Yau Mathematical Sciences Center, Tsinghua University}
\date{\vspace{-5ex}}
\begin{document}

\maketitle

\begin{abstract}
    The dynamics of large complex systems are predominately modeled through pairwise interactions, the principle underlying structure being a network of the form of a digraph or quiver. Significant success has been obtained in applying the homology of the directed flag complex to study digraphs arising as networks within numerous scientific disciplines. This homology of directed cliques enjoys relative ease of computation when compared to other digraph homologies, making it preferable for use in applications concerning large networks. By extending the ideas of singular simplicial homology to quivers in categories of different morphism types, several new singular simplicial homology theories have recently been constructed.

    Computationally efficient homologies for quivers have in general not previously been seriously considered. In this paper we develop further the homotopy theory of quivers necessary to derive functors that realise isomorphisms between the singular simplicial quiver homologies and the homologies of certain spaces. The simplicial chains of these spaces arise in a conveniently compact form that is at least as convenient as the directed flag complex for computations.
    
    Moreover, our constructions are natural with respect to the isomorphisms on homology making them suitable for applications in conjunction with persistent homology for practical use. In particular, for each of the singular simplicial homologies considered, we provide efficient algorithms for the computation of their persistent homology.
\end{abstract}

\tableofcontents

\section{Introduction}

The investigation of complex systems is the study of a large number of distinct entries influencing each over though pairwise interactions from which emergent behavior results in global dynamics. Complex systems are ubiquitous, describing a vast verity of natural and manmade phenomenon, with greater prevalence resulting from the growing availability of large networked data sets.  As a consequence of their importance, mathematical modeling of complex networks is extremely diverse \cite{Boccaletti2006}. However, network models are based universally on a collection of nodes and pairwise relations between them. Formally, a network has the mathematical structure of a graph, with greater flexibility provided by digraphs where pairwise relations are oriented between nodes. Notably, directed networks demonstrate phenomenon that cannot be captured by symmetric relations alone \cite{Asllani2014}. Significant examples of directed complex networks include food webs \cite{Brose2019}, spreading of disease \cite{Keeling2005}, or neural connectivity both biological \cite{Goodfellow2022} and artificial \cite{Seungwoong2021}.

Yet more structure can be encodes using quivers, allowing for multiple interactions of the same direction in the form of multiple edges and self interactions of a single node in the form of loops. In particular, both of these interaction types occur in circumstances such as gene regularly networks \cite{Macneil2011}, or representations of gauge theories \cite{Douglas1996}.

The development of methods for the analysis of digraphs and quivers therefore forms a foundation for more sophisticated models including higher order interactions \cite{Feng2024}, and networked dynamical systems \cite{Vespignani2012}. Nevertheless, the graphical structure itself is in may circumstances sufficient \cite{Ganmor2011, Merchan2016}. Important large scale network properties often captured though local sub-networks combining to provide global structure \cite{Ugander2013, Karuza2016, Li2018, Hu2024}.

Classical algebraic topology provides invariants of topological spaces, including singular homology, that extract global structure up to continuous deformations know as homotopies \cite{Munkers1984, Hatcher2002}. For computational purposes it is convenient to describe spaces as simplicial complexes, or more generally $\Delta$-sets. Given a weighted undirected network, a filtered sequences of Vietoris-Rips complexes can be obtained and persistent homology applied to capture multiscale network features. Using the graph metric induced by the weightings, persistent homology in demission zero is equivalent to single linkage hierarchical clustering \cite{Carlsson2010}, describing network communities. In higher dimensions persistent homology features can be interpreted as measuring the scale of cycles and voids contained within the network, naturally formalising a description of higher-order structures. The fact that classical community structure alone is in general insufficient to capture all system properties \cite{KartunGiles2019, Ritchie2017, Iacopini2019} makes the detection of higher order structures particularly important. More generally, persistent homology is a widely applied method in the field of topological data analysis \cite{Pun2022}, can be effectively computed \cite{Bauer2021}, and satisfies desirable stability proprieties \cite{Chazal2008}. 

When considering a directed graph, one obtains a space known as the directed flag complex by associating to each directed clique of vertices a simplex, as first described by Masulli and Villa \cite{Masulli2017}. Persistents homology can then be applied to extract global geometric structure from a weighted directed network though the topology of its associated filtered directed flag complex. More generally, once a filtered simplicial complex is obtained from a weighted directed network, other specialized methods can be developed for its analysis without the use of persistent homology \cite{Riihimaki2023, Jones2025}. Applications of the directed flag complex include the interpretation of brain activity \cite{Reimann2017}, dynamics of artificial neural networks \cite{Masulli2016}, protein-ligand binding affinity  \cite{Zia2025}, and the study of various interaction networks \cite{Kannan2019}. To enable the incorporation of persistent homology, in this work we primally consider weighted directed networks, allowing the further capacity to study networks with a real value associated to each interaction.

While the directed flag complex is valued for the simplicity of its construction, in the case of directed networks it is not always immediately obvious to decide which directed sub-networks should be considered topologically trivial within the context of a given application. Therefore, it is harder to determine an appropriate filtered space to associate to the networks underlying digraph or quiver. Furthermore, major drawbacks of the the directed flag complex lie in the fact that it does not satisfy good homotopy \cite[\S 6]{Chaplin2024} or stability properties \cite[\S 5]{Chaplin2024}\cite[\S 5.2]{Turner2019}.

An alternative well developed approach to the homology of directed graphs is provided by the path homology of Grigor’yan, Lin, Muranov and Yau \cite{Grigoryan2013}, formulated around the idea of detecting cycles in equal length paths within a digraph. In contrast to the directed flag complex, path homology has been shown to satisfy variations of the most important properties of classical homology of spaces \cite{Grigoryan2014, Grigoryan2017, Grigoryan2018}, and has stable persistent homology when applied to weighted networks \cite[\S 5]{Chowdhury2018}. However, there is no agreed extensions of path homology to quivers \cite{Grigoryan2018b}\cite[\S 12]{Ivanov2024}, with any extension satisfying the desired homotopy invariance being trivial (see Remark~\ref{rmk:NonStrongRedcuedquiverCase}). Moreover, effective computation of path homology faces major challenges \cite[\S 1.7 and \S 2.3]{Grigoryan2022}. The computational problems originate primarily from difficulties in explicitly describing a basis of the path homology chain complex. Consequently, efficient computation of path homology is presently limited to dimensions zero and one \cite{Dey2022}.

However, computation does not pose as significant a problem for singular homologies such as the directed flag complex, with developed highly parallelizable algorithms publicly available \cite{Lutgehetmann2020} making it possible to analyse large scale complex systems. For the purposes of applications, it would therefore be highly desirable to develop algorithms for singular homology theories with good theoretical proprieties that are also applicable to quivers in general, and that retain the computational advantages of the directed flag complex. 

The digraph consisting of a single clique on $n+1$ vertices $0,\dots,n$ with directed edges from any vertex to one with a larger value, may be considered a discrete directed analog of the usual geometric $n$-simplex. Given a category whose objects are quivers, a singular simplex is a morphism from the directed $n$-simplex to a given quiver. The singular homology of the quiver is then the homology of the chain complex freely generated by singular mappings in each dimension. In a recent paper Li, Muranov, Wu and Yau \cite{Li2024} gave three natural morphisms types: quiver mappings for which edges can be sent to edges or vertices, quiver homomorphisms which only send edges to edges, and the usual quiver inclusions. Using these three categories, three associated singular simplicial homologies $H_*^{\Delta,m}$, $H_*^{\Delta,h}$, and $H_*^{\Delta,i}$ of quiver were introduced with associated functorial and homotopy proprieties provided. Notably, persistent $H_*^{\Delta,m}$ coincides with the homology of the ordered tuple complex of a digraph introduced by Turner, who proved a stability theorem \cite[Theorem 21]{Turner2019} in this case. In particular, strong homotopy invariance \cite[Theorem 5.8]{Li2024} and stability provide $H_*^{\Delta,m}$ with a significant advantage over the homology of the directed flag complex when applied to complex networks.

In this work we further develop the homotopy theoretic properties of the singular simplicial homologies $H_*^{\Delta,m}$, $H_*^{\Delta,h}$, and $H_*^{\Delta,i}$, providing efficient algorithms necessary to apply them to complex systems in practice. This is achieved by associating to each quiver $G$ and singular homology a spaces such that its homology coincides with $H_*^{\Delta,m}(G)$, $H_*^{\Delta,h}(G)$, or $H_*^{\Delta,i}(G)$. Significantly, for a finite quiver the chain complexes of these spaces typically consist of a considerably smaller number of generators. Other approaches to singular homologies of digraphs considered in the context of applied topology are Dowker complexes \cite{Chowdhury2018b}, the preorder homology of strongly connected components \cite[\S 6]{Turner2019}, complexes of tournaments \cite{Govc2021} and the homology of closure spaces \cite{Bubenik2024}.

We begin in Section~\ref{sec:Background} by introducing the necessary algebraic topology background and results from \cite{Li2024}. Furthermore, we provide in Section~\ref{sec:DirectedFlagComplex} a generalised directed flag functor to the category of $\Delta$-sets whose homology naturally coincides with $H_*^{\Delta,i}(G)$ for an arbitrary quiver $G$. Under certain restrictions on $G$, the homologies $H_*^{\Delta,m}$, $H_*^{\Delta,h}$, and $H_*^{\Delta,i}$ are the same, and these relationships are fully described in Section~\ref{sec:HomologyRelations}. Explicitly, $H_*^{\Delta,h}$ and $H_*^{\Delta,i}$ agree when $G$ has no loops, and all three homologies coincide when considering digraphs without double edges. Moreover, these results are seen to be strict through the construction of counter examples in the absence of the necessary conditions.

The main constructions presented in this work begin in Section~\ref{sec:ComputingMappingHomology}, where we detail the necessary theoretical machinery to achieve efficient computation of $H_*^{\Delta,m}$. We initially describe in Section~\ref{sec:ReducedDigraph} the reduced digraph $\bar{\mathcal{R}}(G)$ of a quiver $G$, which we show is a strong homotopy deformation retraction of $G$. As a consequence the strong homotopy invariance of $H_*^{\Delta,m}$ now implies that $H_*^{\Delta,m}(G)$ is not dependent on the quiver structure of $G$, being invariant under operations collapsing loops and multiple edges. This simplification leads in Section~\ref{sec:RducedDirectedFlag} to a proof that $H_*^{\Delta,m}(G)$ is isomorphic to the homology of a construction we call the reduced directed flag complex $\bar{\mathcal{F}}(G)$. Moreover, $\bar{\mathcal{F}}$ is a functor acting naturally with respect to this isomorphism, providing a suitable alternative framework for computation of persistent $H_*^{\Delta,m}$. In fact computing the homology of $\bar{\mathcal{\mathcal{F}}}(G)$ is always simpler than computing $H_*^{\Delta,m}(G)$ on its own, with $H_*^{\Delta,m}$ chains in general being prohibitively large to compute the homology directly. This advantage is made explicit when we consider the computational complexity of the corresponding algorithms in Appendix~\ref{sec:ComplexityDirectedFlag}. 

Building on the ideas of Section~\ref{sec:ComputingMappingHomology}, we develop analogous computational results for $H_*^{\Delta,h}$ in Section~\ref{sec:ComputingHomomorphismHomology}. However, in this case the situation is somewhat more complex. We make use of a new notion of local strong $h$-homotopy initially set-out in Section~\ref{sec:HomotopyTheory}, which $H_*^{\Delta,h}$ is shown to be invariant under. The local strong $h$-homotopies are more flexible than the strong $h$-homotopies provided as an invariant of $H_*^{\Delta,h}$ in \cite{Li2024}. Analogously to the reduced digraph $\bar{\mathcal{R}}$ from Section~\ref{sec:ComputingMappingHomology}, local strong $h$-homotopies yield in \ref{sec:PartiallyReducedQuiver} a deformation retraction of a quiver $G$ to the partially reduced quiver $\tilde{\mathcal{R}}(G)$, which collapses multiple edges incident to vertices with a loop. This construction leads in Section~\ref{sec:PartialFlagComplex} to a proof that $H_*^{\Delta,m}(G)$ is isomorphic to the homology of a construction we call the partially reduced direct flag complex $\tilde{\mathcal{F}}_<(G)$, which in turn is applied to show $H^{\Delta,h}$ is invariant under a weaker notion of weak local strong homotopy of quivers. Crucially, the $\Delta$-set $\tilde{\mathcal{F}}_<(G)$ like $\bar{\mathcal{F}}$ for $H_*^{\Delta,m}$ provides a better intuitive description of the information captured by $H_*^{\Delta,h}$ and is a basis for more efficient computation of persistent $H_*^{\Delta,h}$. In addition, the structure of $\tilde{\mathcal{F}}_<(G)$ demonstrates the dependence of $H_*^{\Delta,h}$ on both loops and multiple edges implying that unlike $H_*^{\Delta,m}$, $H_*^{\Delta,h}$ detects the additional structure of the quiver not captured in its reduced digraph $\bar{\mathcal{F}}(G)$, while simultaneously satisfying a more flexible homotopy invariance proprieties than $H_*^{\Delta,i}$.

Finally, based on the existing memory efficient parallelizable algorithms for computation of the directed flag complex of a digraph \cite{Lutgehetmann2020}, in Appendix~\ref{sec:Algorithms} we detail the precise steps for the computation of the directed flag complex $\mathcal{F}(G)$, reduced directed flag complex $\bar{\mathcal{F}}(G)$, and partially reduced directed flag complex $\tilde{\mathcal{F}}(G)$ of a quiver $G$. All algorithms are considered over filtered quivers so that they may be applied together with existing packages for persistent homology and demonstration code is made available at \cite{Burfitt2025}. We additionally analyse the computational complexity of each algorithm, detailing the time savings when computing the persistent homology of $\bar{\mathcal{F}}(G)$ and $\tilde{\mathcal{F}}(G)$ over direct computation of persistent $H_*^{\Delta,m}(G)$ and $H_*^{\Delta,h}(G)$, respectively.

\section{Background}\label{sec:Background}

We now provide the necessary background on simplicial homology and singular simplicial homologies of digraphs and quivers used during the remainder of this work.
Throughout the section assume that $n$ is a non-negative integer and $R$ a commutative ring, unless stated otherwise.

\subsection{Abstract simplicial complexes and $\Delta$-sets}\label{sec:Spaces}

In this subsection we detail the properties of the models of space used in this work. We make use of abstract simplicial complexes and $\Delta$-sets as they provide flexible combinatorial structures that work well for computer computations.

An \emph{abstract simplicial complex} $(S,V)$ consist of a \emph{vertex set} $V$ and a non-empty set of finite subsets $S$ of $V$ such that
\[
    s\in S \; \text{and} \; s'\subseteq s \implies s'\in S.
\]
An $s \in S$ with size $n+1$ is called an \emph{$n$-simplex} of $S$.

Let $(S_1,V_1)$ and $(S_2,V_2)$ be abstract simplicial complexes. A \emph{simplicial map} of abstract simplicial complexes $f\colon (S_1,V_1) \to (S_2,V_2)$ is a map of sets $f\colon V_1\to V_2$ such that
\[
    \{v_0,\dots,v_n\}\in S_1 \implies \{ f(v_0),\dots,f(v_n)\} \in S_2.
\]
Denote the category of abstract simplicial complexes and simplicial maps by $\textbf{ASim}$.

A \emph{$\Delta$-set} or \emph{semi-simplicial set} $X$ consists of a sequence of sets $\{X_n\}_{n=0}^\infty$ and \emph{face maps}
\[
    d_i^n \colon X_{n+1} \to X_{n}
\]
for each integer $n\geq 0$ and $i=0,\dots,n+1$, such that
\begin{equation}\label{eq:FaceMapConditions}
    d_i^{n} \circ d_j^{n+1} = d_{j-1}^{n} \circ d_i^{n+1}
\end{equation}
where $j=1,\dots,n+1$ and $i<j$. In this case, $X_0$ is the set of \emph{vertices} of the $\Delta$-set $X$.
More generally, $X_n$ is the set of \emph{$n$-simplices} of the $\Delta$-set $X$.

Let $(S,V)$ be an abstract simplicial complex and $<$ a total order on the vertex set $V$.
We obtain a $\Delta$-set $X$ from $(S,V)$ and $<$ by setting each $X_n$ to be the set of $n$-simplices of $(S,V)$ and
\[
    d_i^n(\{v_0,\dots,v_{n+1}\}) = \{v_0,\dots,v_{n+1}\} \setminus \{v_i\}
\]
when $v_0,\dots,v_{n+1}$ are ordered by $<$.
Conversely, not every delta set can be obtained from an abstract simplicial complex.
For example, a $\Delta$-set can poses elements $x_1,x_2\in X_n$ whose set of images in $X_0$ under compositions of face maps
\[
    d^0_{i_0}\circ \cdots \circ d^{n-1}_{i_{n-1}}
\]
for some $i_j=0,\dots,j+1$ and $j=0,\dots,n-1$, are identical.
For a $\Delta$-set to be realised as an abstract simplicial complex using the construction above, the set of vertices of each simplex $x\in X_n$ described by the images of the previous equation must be unique and have size $n+1$ for each $n\geq 0$.

Let $X$ be $\Delta$-set and $R$ a commutative ring.
Then we form a chain complex on the free graded $R$-module $C_n(X)=R[X_n]$, with differential
\begin{equation}\label{eq:DeltaSetDiff}
    \partial_n = \sum_{i=0}^n (-1)^id^{n-1}_i.
\end{equation}
The homology $H_*(X)$ of a $\Delta$-set $X$ is the homology of the chain complex described above.
In the case of a $\Delta$-set obtained from an abstract simplicial complex, the homology is independent of the total order on the vertex set. Therefore, the homology of an abstract simplicial complex may be defined by the same construction.

A morphism $m\colon X \to Y$ between $\Delta$-sets $X$ and $Y$ is a sequence of functions $m_n\colon X_n \to Y_n$ such that
\[
    d_i^n \circ m_{n+1} = m_n \circ d_i^n
\]
for each integer $n\geq 0$ and $i=0,\dots,n+1$.
We denote the category of $\Delta$-sets and morphisms of $\Delta$-sets by $\textbf{DSets}$. 

Given a morphism of $\Delta$-sets $m \colon X\to Y$, the object wise construction of chain complexes described by equation~\eqref{eq:DeltaSetDiff} can be extended to a functor $C_*$ from $\textbf{DSets}$ to the category of chain complex by linearly extending each $m_n$ to a chain map ${m_n}_\#\colon R(X_n) \to R(Y_n)$. Passing to homology, this makes the homology of $\Delta$-sets a functor.

\subsection{Digraphs and quivers}\label{sec:Quivers}

The central objects of study in this work are digraph and quivers, the former being a special case of the latter. The primary distinction between digraphs and quivers lies in the types of edges allowed in each instance. Digraph and quivers appear frequently in applications as the fundamental objects describing directed pairwise relations in complex networks. While the development of homology theories of digraphs has received significant attention, quivers have received far less.

A \emph{digraph} $G=(V_G, E_G)$, consists of a non-empty set of \emph{vertices} $V_G$ and a set of \emph{edges}
\[
    E_G \subseteq \{ (u,v) \in V_G \times V_G \: | \: u \neq v \}.
\]
We also denote an edge $(u,v) \in E_G$ by $u \to v$.

A \emph{quiver} $G=(V_G, E_G, s_G, t_G)$, consists of a non-empty set of \emph{vertices} $V_G$ and a set of \emph{edges} $E_G$, as well as maps
\[
    s_G\colon E_G \to V_G
    \;\;\; \text{and} \;\;\;
    t_G\colon E_G \to V_G
\]
called the \emph{source} and \emph{target} functions, respectively. 
We often drop the subscript $G$ from $s_G$ and $t_G$, as it is usually clear from the context which quiver they are associated to.
A \emph{subquiver} of $G$ is a quiver $H$ such that $V_H \subseteq V_G$, $E_H\subseteq E_G$ and $s_H$, $t_H$ are the restrictions of $s_G$, $t_G$ to $E_H$.
A subquiver $H$ of $G$ is \emph{full} if $E_H=\{e \in E_G \mid s_G(e) \in V_H \: \text{and} \: t_G(e) \in V_H \}$.
In addition, given a quiver $G$ and vertices $u,v\in V_G$, denote by $G_{u,v}$ the number of edges $e\in E_G$ such that $s_G(e)=u$ and $t_G(e)=v$.

A digraph $G$ naturally carries the structure of a quiver by setting
\[
    s_G((u,v)) = u
    \;\;\; \text{and} \;\;\;
    t_G((u,v)) = v
\]
for each $(u,v)\in E_G$.
Therefore, from now on we usually treat a digraph as a special case of a quiver without explicitly stating the application of the above identification.

We also make use of the following additional terminology for quivers.
A quiver $G$ is said to have a \emph{double edge} between $u,v \in E_G$ if there are $d_1,d_2\in E_G$ such that
\[
    s(d_1)=t(d_2)=u\neq v=t(d_1)=s(d_2).
\]
A quiver $G$ \emph{has double edges} if it has at least one double edge between some pair of vertices.
An edge $d \in E_G$ is called a \emph{double edge} if there is a double edge between $s(d)$ and $t(d)$.

A quiver $G$ has a \emph{multiple edge} between $u \in V_G$ and $v\in V_G$
if there are $m_1,m_2\in E_G$ such that
\[
    m_1 \neq m_2, \;\;\;
    s(m_1)=s(m_2) = u,
    \;\;\; \text{and} \;\;\;
    t(m_1)=t(m_2) = v.
\]
A quiver $G$ \emph{has multiple edges} if it has at least one multiple edge between some pair of vertices. An edge $m \in E_G$ is called a \emph{multiple edge} if there is are multiple edges between $s(m)$ and $t(m)$.

Finally, a quiver $G$ is said to have a loop at vertex $v \in V_G$, if there is an $l\in E_G$ such that
\[
    s(l)=t(l).
\]
A quiver $G$ \emph{has loops} if it has at least one loop at a vertex.
An edge $e\in E_G$ is said to \emph{have a loop}, when either of the vertices $s(e)$ or $t(e)$ have a loop. As a quiver, a digraph can have double edges but not multiple edges or loops 

Throughout this work three different class of morphisms between quivers are considered. These morphisms are; quiver maps, quiver homomorphisms, and quiver inclusions, with each a specialisation of the preceding morphism type.

More precisely, let $G$ and $G'$ be quivers. 
A \emph{map} of quivers $m\colon G \to G'$ is a pair of functions
\[
m_V \colon V_G \to V_{G'}
\;\;\; \text{and} \;\;\;
m_E \colon E_{G} \to E_{G'} \coprod V_{G'}
\]
such that for every $e\in E_G$, either
\begin{align*}
    &
    m_E(e) \in E_{G'}
    \: \text{and} \:
    m_V(s(e)) = s(m_E(e)),\: m_V(t(e)) = t(m_E(e)) \: \\
    \text{or} \; &
    m_E(e) = m_V(s(e)) = m_V(t(e)) \in V_{G'}.
\end{align*}
When the image of $m_E$ lies in $E_{G'}$, $m$ is called a \emph{homomorphism}.
If in addition $m_V$ and $m_E$ are also injective, then $m$ is an \emph{inclusion}.
When it is clear from the context what is meant, from now on we write $m$ instead of $m_V$ or $m_E$.

Each choice of morphism type provides a different category of quivers, and each such category will be a subcategory of a category formed using a less strict morphism type. We denote the categories of quivers formed using inclusions, homomorphisms, and maps by
\[
    \textbf{Quiv}_i \subset \textbf{Quiv}_h \subset \textbf{Quiv}_m,
\]
respectively.

\subsection{Singular simplicial homologies of digraphs and quivers}

The homology theories presented in this subsection were defined in \cite{Li2024} alongside cubical homology theories on two types of singular $n$-cubes with respect to the three morphism types set out in the previous subsection. Throughout this section assume that $G$ is a quiver and $n\geq 0$ an integer, unless otherwise stated.

The \emph{$n$-dimensional directed simplex} $\Delta^n$ is a digraph on vertex set
\[
    V_{\Delta^n} = \{0,\dots,n\}
\]
and edge set
\[
    E_{\Delta^n} = \{(a,b) \in V_{\Delta^n} \times V_{\Delta^n} \: | \: a < b  \}.
\]
Face maps $\delta_i\colon \Delta^{n} \to \Delta^{n+1}$ for each $i=0,\dots,n+1$ are given by the unique digraph maps satisfying
\begin{equation}\label{eq:SingularSimplicialFaceMap}
    \delta_i(j) = 
    \begin{cases}
        j \ &\text{if} \: j<i,
        \\
        j+1  &\text{otherwise.}
    \end{cases}
\end{equation}

A \emph{singular $n$-simplex} in $G$ is a quiver map $f\colon\Delta^n \to G$, a \emph{singular $n$-simplex homomorphism} is a homomorphism $f\colon\Delta^n \to G$, and a \emph{singular $n$-simplex inclusion} is an inclusion $f\colon\Delta^n \to G$. 
Collectively each of the types of singular morphism $f\colon\Delta^n \to G$ are referred to as \emph{singular simplices}.
We also usually drop the $n$ from each of the notions of singular simplices when the dimension is clear from the context, writing singular simplex instead of singular $n$-simplex.

\begin{definition}
    Let $G$ be a quiver, $n\geq 0$ an integer and $R$ a commutative ring.
    \begin{itemize}
        \item
        Denote by $C^{\Delta,m}_n(G;R)$ the free $R$-module generated by singular $n$-simplices in $G$.
        \item
        Denote by $C^{\Delta,h}_n(G;R)$ the free $R$-module generated by singular $n$-simplex homomorphisms in $G$.
        \item
        Denote by $C^{\Delta,i}_n(G;R)$ the free $R$-module generated by singular $n$-simplex inclusions in $G$.
    \end{itemize}
    In each case, define $\partial_n(f)\colon \Delta^{n-1} \to G$ by
    \[
        \partial_n(f) = \sum^n_{i=0} (-1)^i(f \circ \delta_i)
    \]
    for $f\colon \Delta^n \to G$ and $n \geq1$, which linearly extends to a differential on each graded $R$-modules, respectively. 
    Collectively the resulting chain complexes are referred to as \emph{singular simplicial chains}.
    The homology of these chain complexes provides \emph{singular simplicial homologies}
    \[
        H^{\Delta,m}_*(G;R),\;\;\; H^{\Delta,h}_*(G;R),\;\;\; \text{and} \;\;\; H^{\Delta,i}_*(G;R),
    \]
    called \emph{singular simplicial homology}, \emph{singular simplicial homomorphism homology}, and \emph{singular simplicial inclusion homology}, respectively.
\end{definition}

The results presented in this work are not affected by the choice of coefficients $R$. Therefore, following standard convention we now suppress the $R$ dependence from each of the chain complexes and homologies defined above.

Given a quiver map, homomorphism, or inclusion $\phi \colon G \to G'$, there are induced chain maps
\begin{equation}\label{eq:InducedMapHomology}
    \phi_{\#} \colon C^{\Delta,m}_*(G) \to C^{\Delta,m}_*(G'),\;\;\;
    \phi_{\#} \colon C^{\Delta,h}_*(G) \to C^{\Delta,h}_*(G'),
    \;\;\; \text{and} \;\;\;
    \phi_{\#} \colon C^{\Delta,i}_*(G) \to C^{\Delta,i}_*(G') 
\end{equation}
given by linearly extending $\phi_\#(f)=\phi \circ f$ for each type of singular simplex $f \colon \Delta^n \to G$, respectively. These chain maps induce graded module homomorphisms
\[
    \phi_* \colon H^{\Delta,m}_*(G) \to H^{\Delta,m}_*(G'),\;\;\;
    \phi_* \colon H^{\Delta,h}_*(G) \to H^{\Delta,h}_*(G'),
    \;\;\; \text{and} \;\;\;
    \phi_* \colon H^{\Delta,i}_*(G) \to H^{\Delta,i}_*(G') 
\]
in each case, respectively.

The following proposition resulting from \cite[Proposition 5.2, 5.13 and \S 6]{Li2024} is obtained from the construction of the induced homomorphisms above.

\begin{proposition}[\cite{Li2024}]\label{prop:InitialFuctoriality}
    Using the induced homomorphisms provided above, the following statements hold.
    \begin{enumerate}[(1)]
        \item 
        The homology $H^{\Delta,m}_*$ is functorial with respect to quiver maps.
        \item 
        The homology $H^{\Delta,h}_*$ is functorial with respect to of quiver homomorphisms.
        \item
        The homology $H^{\Delta,i}_*$ is functorial with respect to quiver inclusions.
    \end{enumerate}
\end{proposition}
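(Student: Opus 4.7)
The plan is to verify all three parts simultaneously, since each reduces to a routine categorical verification once the chain-level construction is understood. Fix one of the three morphism types $\bullet \in \{m,h,i\}$ and let $\phi \colon G \to G'$ be a morphism in $\textbf{Quiv}_\bullet$. I would proceed in three short steps: (i) show that $\phi_\#$ is well-defined on the appropriate chain module, (ii) show that $\phi_\#$ is a chain map, and (iii) show that the assignments $\phi \mapsto \phi_\#$ respect identities and composition. Passing to homology then gives the claimed functoriality.

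For step (i), I would observe that if $f \colon \Delta^n \to G$ is a morphism in $\textbf{Quiv}_\bullet$, then so is $\phi \circ f \colon \Delta^n \to G'$, because the three classes of morphisms form the stated subcategories $\textbf{Quiv}_i \subset \textbf{Quiv}_h \subset \textbf{Quiv}_m$ and composition in each is again of the same type. This guarantees $\phi_\#(f) = \phi \circ f$ lies in the correct module, so $\phi_\#$ extends linearly to a well-defined homomorphism between the generating modules in each degree. For step (ii), I would use associativity of composition of quiver morphisms to compute, for any singular simplex $f \colon \Delta^n \to G$,
\[
\phi_\#(\partial_n f) \;=\; \sum_{i=0}^{n}(-1)^i \phi \circ (f \circ \delta_i) \;=\; \sum_{i=0}^{n}(-1)^i (\phi \circ f) \circ \delta_i \;=\; \partial_n(\phi_\# f),
\]
so $\phi_\#$ commutes with the differential given by the formula defining $\partial_n$ in the previous definition.

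For step (iii), the identity $(\mathrm{id}_G)_\#(f) = \mathrm{id}_G \circ f = f$ gives $(\mathrm{id}_G)_\# = \mathrm{id}$, and for a further morphism $\psi \colon G' \to G''$ in $\textbf{Quiv}_\bullet$, associativity gives $(\psi \circ \phi)_\#(f) = (\psi \circ \phi) \circ f = \psi \circ (\phi \circ f) = \psi_\#(\phi_\#(f))$. Thus $C_*^{\Delta,\bullet}$ is a functor from $\textbf{Quiv}_\bullet$ to chain complexes, and composing with the homology functor yields the desired $\phi_*$ satisfying $(\mathrm{id})_* = \mathrm{id}$ and $(\psi \circ \phi)_* = \psi_* \circ \phi_*$.

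The main obstacle, such as it is, lies in step (i) for the case $\bullet = m$: one must unpack the definition of a quiver map and verify that the composite $\phi \circ f$ of two quiver maps is again a quiver map, since edges may be sent either to edges or to vertices. Concretely, if $f$ sends an edge $e \in E_{\Delta^n}$ to an edge $e' \in E_G$ that $\phi$ collapses to a vertex $v' \in V_{G'}$, one must check $v' = \phi(s(e')) = \phi(t(e')) = \phi(f(s(e))) = \phi(f(t(e)))$, so that the compatibility condition for $\phi \circ f$ at $e$ is satisfied; the remaining cases are similar and follow directly from the source/target conditions for $\phi$ and $f$. Once this case analysis is recorded, the rest of the argument is purely formal. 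Note that the corresponding check for $\bullet = h$ is easier, since neither $\phi$ nor $f$ collapses edges, and for $\bullet = i$ one additionally has injectivity, both of which are clearly preserved under composition.
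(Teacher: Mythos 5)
Your proof is correct and takes essentially the approach the paper has in mind: the paper simply cites \cite[Proposition 5.2, 5.13 and \S 6]{Li2024} and remarks that the result follows from the construction of the induced chain maps in equation~\eqref{eq:InducedMapHomology}, which is exactly what you spell out (well-definedness of $\phi_\#$ via closure of each morphism class under composition, compatibility with $\partial_n$ by associativity, and preservation of identities and composites). Your case analysis for $\bullet = m$ of a quiver map collapsing an edge that $f$ hits is the one nontrivial verification, and you carry it out correctly.
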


\subsection{The acyclic carrier theorem}

Motivated by the structure of simplicial chains, the acyclic carrier theorem is a classical result that provides a general algebraic framework for the construction of chain homotopies.
The material covered this subsection can be found in \cite[\S 1.13]{Munkers1984} and provides a key tool for the proofs of some of our main results. In this subsection, all chain complexes are considered to be over a commutative ring $R$, unless stated otherwise.

Throughout this work, we denote by $\textbf{Chain}_R$ or $\textbf{Chain}$ the category of chain complexes and chain maps with coefficient in $R$. Given a chain map $g\colon C_* \to D_*$, we denote by $g_* \colon H_*(C_*)\to H_*(D_*)$ the induced map on the homology of the chain complexes $C_*$ and $D_*$.

An \emph{augmentation} of the chain complex $(C_*,\partial)$ is a ring homomorphism $\varepsilon \colon C_0 \to R$ such that
\[
    \varepsilon \circ \partial = 0.
\]
In this case $(C_*,\partial,\varepsilon)$ is called an \emph{augmented chain complex}. The homology of the chain complex obtained from an augmented chain complex $(C_*,\partial,\varepsilon)$ by setting
\[
    C_{-1}=R 
    \;\;\; \text{and} \;\;\;
    \partial_{-1} = \varepsilon
\]
is called the \emph{reduced homology} of $(C_*,\partial,\varepsilon)$.
If all the reduced homology groups of an augmented chain complex are trivial then it is called \emph{acyclic}.

All chain complexes $C_*$ considered in this work are freely generated in degree $0$. In this case, we may chose an augmentation by linearly extending $\varepsilon(v)=1_R$ for each element $v$ in a given basis of $C_0$.
A quiver $G$ is called \emph{acyclic} with respect to a particular singular simplicial homology, if its chain complex is acyclic with respect to the augmentation above provided by the basis of singular $0$-simplices.

A chain map $f_*\colon C_* \to C'_*$ between augmented chain complexes $(C_*,\partial,\varepsilon)$ and $(C'_*,\partial',\varepsilon')$ is called \emph{augmentation preserving} if
\[
    \varepsilon' \circ f_0 = \varepsilon.
\]

\begin{definition}
    Let $(C_*,\partial,\varepsilon)$ and $(C'_*,\partial',\varepsilon')$ be augmented chain complexes, such that $C_*$ is a free module in each degree.
    Suppose that $\{ c_n^i \}_{i\in I_n}$ is an $R$-basis of $C_n$ indexed over the set $I_n$ for each $n\geq 0$.
    Then an \emph{acyclic carrier}, $\varphi$ from $C_*$ to $C'_*$ on basis $\{c_*^i\}_{I_*}$ is a sequence of functions $\varphi_n$ assigning $c_n^i$ to a chain subcomplex $\varphi_n(c_n^i)$ of $C'_*$ for each integer $n\geq 0$ and $i\in I_n$, such that:
    \begin{enumerate}[(1)]
        \item 
        Each $\varphi_n(c_n^i)$ is augmented by $\varepsilon'$ and is acyclic.
        \item
        For $n\geq 1$ and each $j\in I_{n-1}$ such that element $c_{n-1}^{j}$ appears in a non-zero term of some $\partial c_n^i$, we have that $\varphi_n(c_{n-1}^j)$ is a subcomplex of $\varphi_n(c_n^i)$.
    \end{enumerate}
    An augmentation preserving chain map $f_*\colon C_* \to C'_*$ is said to be carried by $\varphi$ if
    \[
        f_n(c_n^i) \in \varphi_n(c_n^i)
    \]
    for any $n\geq 0$ and $i \in I_n$.
\end{definition}

The following theorem is know as the acyclic carrier theorem.

\begin{theorem}\label{thm:AcyclicCarriers}
    Let $(C_*,\partial,\varepsilon)$ and $(C'_*,\partial',\varepsilon')$ be augmented chain complexes, such that $C_*$ is a free module in each degree.
    Suppose that $\varphi$ is an acyclic carrier from $C_*$ to $C'_*$ with respect to some chosen basis on $C_*$.
    Then any two augmentation preserving chain maps carried by $\varphi$ are chain homotopy equivalent.
\end{theorem}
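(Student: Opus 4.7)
The plan is to construct the chain homotopy $D_n \colon C_n \to C'_{n+1}$ satisfying $\partial'_{n+1} D_n + D_{n-1} \partial_n = f_n - g_n$ by induction on $n$, defining $D_n$ on each basis element $c_n^i$ and extending $R$-linearly. The guiding principle throughout is the auxiliary condition that $D_n(c_n^i) \in \varphi_n(c_n^i)_{n+1}$, which allows us to use acyclicity of the carrier at every stage.

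For the base case $n = 0$, I would observe that for each basis element $c_0^i$, the chain $f_0(c_0^i) - g_0(c_0^i)$ lies in $\varphi_0(c_0^i)_0$ because both $f_*$ and $g_*$ are carried by $\varphi$. Applying $\varepsilon'$ and using augmentation preservation shows this chain lies in $\ker \varepsilon'$. Acyclicity of the augmented subcomplex $\varphi_0(c_0^i)$ then yields a $D_0(c_0^i) \in \varphi_0(c_0^i)_1$ with $\partial'_1 D_0(c_0^i) = f_0(c_0^i) - g_0(c_0^i)$, and we take this as the definition of $D_0$ on the basis.

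For the inductive step, assuming $D_k$ has been constructed on basis elements in each degree $k < n$ with $D_k(c_k^j) \in \varphi_k(c_k^j)_{k+1}$, define
\[
    z = f_n(c_n^i) - g_n(c_n^i) - D_{n-1}\bigl(\partial_n c_n^i\bigr).
\]
The key use of condition (2) of the carrier is here: writing $\partial_n c_n^i = \sum r_j c_{n-1}^{i_j}$, each $\varphi_{n-1}(c_{n-1}^{i_j})$ is a subcomplex of $\varphi_n(c_n^i)$, so $D_{n-1}(c_{n-1}^{i_j}) \in \varphi_n(c_n^i)_n$, and combined with $f_n(c_n^i), g_n(c_n^i) \in \varphi_n(c_n^i)_n$ this shows $z \in \varphi_n(c_n^i)_n$. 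A short calculation using the chain map property of $f, g$ and the inductive hypothesis (with the base case handled separately when $n = 1$) gives $\partial' z = 0$. Acyclicity of $\varphi_n(c_n^i)$ in degree $n \geq 1$ now produces $D_n(c_n^i) \in \varphi_n(c_n^i)_{n+1}$ with $\partial'_{n+1} D_n(c_n^i) = z$, which is exactly the chain homotopy relation on the basis element.

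The main obstacle, and the step that motivates the technical form of the definition of acyclic carrier, is arranging that $z$ both lies inside $\varphi_n(c_n^i)$ and is a cycle there. Condition (2) of the carrier is tailored precisely so that the lower-dimensional pieces of $D_{n-1}\partial_n c_n^i$ stay inside the same acyclic subcomplex; without it, $z$ would be a cycle in $C'_*$ but not necessarily in any acyclic subcomplex, and we would lose the ability to solve for $D_n(c_n^i)$. Once this bookkeeping is in place, the rest of the argument is the standard chain-level computation, and linear extension of $D_n$ from the basis to all of $C_n$ is automatic because $C_n$ is free.
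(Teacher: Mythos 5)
Your proof is correct and is precisely the standard inductive argument the paper defers to (it cites Munkres, Theorem 13.4, rather than reproducing a proof); the base case via augmentation preservation, the invariant $D_n(c_n^i)\in\varphi_n(c_n^i)_{n+1}$, and the use of carrier condition (2) to keep $z$ inside the acyclic subcomplex all match the reference argument.
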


A proof of the acyclic carrier theorem can be found in \cite[proof of Theorem 13.4]{Munkers1984}.

\subsection{The directed flag complex}\label{sec:DirectedFlagComplex}

The directed flag or clique complex was first defined in \cite{Masulli2017} as a simplicial complex constructed from a digraph without double edges. Following its construction, the directed flag complex has been widely used in applications due to its intuitive simplicity and relative ease of computation \cite{Lutgehetmann2020}.

For the purposes of comparison to the other spaces constructed later in this work, we provide here a generalised $\Delta$-set construction of the directed flag complex defined over any quiver. This new functor generalise the usual simplicial complex associated to a digraph, see \cite[\S 4.1.3]{Reimann2017}. In addition, we provide an efficient algorithm for the computation of the directed flag complex in Appendix~\ref{sec:AlgDirectedFlagComplex}. However, we note that the presence of loops in a quiver does not effect the construction of the directed flag complex (see Prostitution~\ref{prop:SimplicalNoLoop}).

\begin{definition}\label{def:DirectedFlag}
    Let $G$ be a quiver. Then the \emph{directed flag complex} $\mathcal{F}(G)$ is a $\Delta$-set defined as follows. The $n$-simplices of $\mathcal{F}(G)$ are given by
    \begin{align*}
        \mathcal{F}(G)_n = \{ & \{ v_i \in V_G , e_{j,k} \in E_G \} \: | \: i=0,\dots,n, \: \text{the} \: v_i \: \text{are distinct,}
        \\ &
        0 \leq j < k \leq n \: \text{are integers, and} \: s(e_{j,k}) = v_j, \:t(e_{j,k}) = v_k \}.
    \end{align*}
    The face map $d^{n-1}_t(x)$ for $t= 0,\dots,n$ and $x\in \mathcal{F}_n(G)$ is given by the subset of $x$ obtained by removing $v_t$, all edges $e_{a,t} \in x$ for $a=0,\dots,t-1$, and all edges $e_{t,b} \in x$ for $b=t+1,\dots,n$.
\end{definition}

As the elements of $\mathcal{F}(G)_n$ coincidence precisely with the images of singular simplex inclusions, we immediately obtain that
\begin{equation}\label{eq:FlagComplexInclusionHomology}
    H_*^{\Delta,i}(G) = H_*(\mathcal{F}(G)).
\end{equation}
Moreover, an inclusion of quivers $\phi \colon G \to G'$ induces an inclusion of $\Delta$-sets 
\begin{equation}\label{eq:FlagFunctor}
    \mathcal{F}(\phi)\colon \mathcal{F}(G) \to F(G')
    \;\;\; \text{given by} \;\;\;
    \mathcal{F}(\{ v_i , e_{l,k} \}_{\substack{0\leq i \leq n \\ 0 \leq j < k \leq n}}) = \{ \phi(v_i) , \phi(e_{j,k}) \}_{\substack{0\leq i \leq n \\ 0 \leq j < k \leq n}},
\end{equation}
making $\mathcal{F}$ a functor that acts naturally with respect to the isomorphism in equation~\eqref{eq:FlagComplexInclusionHomology}. In particular, the naturality of the functor $\mathcal{F}$ provides the structure necessary for persistent $H_*^{\Delta,i}$ to be computed as the persistent homology of the corresponding filtered directed flag complex.

\subsection{Quiver homotopies}

The first construction of digraph homotopies of which we are aware appeared in \cite{Grigoryan2014} as an invariant of the path homology of digraphs containing an earlier constructions of homotopies for undirected graphs as a special cases.
A more general framework of homotopies for path complexes (originating in \cite[\S 3]{Grigoryan2013}) was recently provided by \cite{Chaplin2024}, within the context of an abstract categorical formulation.
In addition, homotopies in the alterative setting of closure spaces that restrict to several notions of homotopy within the category of digraphs where detailed in \cite[\S 4]{Bubenik2024}.
For the purposes of this work we focus on the generalisations of digraph homotopy to quivers first appearing in \cite{Li2024}, which we further develop in Section~\ref{sec:HomotopyTheory}. Throughout this section assume that $G$ is a quiver, unless otherwise stated.

The \emph{line digraph}, $I$ is the digraph with
\begin{equation}\label{eq:Interval}
    V_I = \{ 0,1 \}
    \;\;\; \text{and} \;\;\;
    E_I = \{ (0,1) \}
\end{equation}
which coincides with the digraph $1$-simplex $\Delta^1$.

The original notion of digraph homotopy was based on the box or cartesian product $G \square I$ of a digraph $G$ with $I$. However, it is the strong box product that is the categorical product in $\textbf{Quiv}_m$ and for the singular simplicial homology theories considered in this work we require only the strong box product of quivers. Therefore, for the purpose of avoiding confusion we retain the word strong throughout the reminder of the paper. 

\begin{definition}\label{def:StrongBoxProduct}
    Let $G=(V_G,E_G,s_G,t_G)$ and $G=(V_{G'},E_{G'},s_{G'},t_{G'})$ be quivers.
    For simplicity, assume also that $V_G,\:E_G,\:V_{G'}$ and $E_{G'}$ are disjoint sets.
    The \emph{strong box product} of $G$ and $G'$ denoted $G \boxtimes G'$, is the quiver with
    \begin{align*}
        V_{G \boxtimes G'}
        = V_G \times V_{G'}, \;\;\;
        E_{G \boxtimes G'}
        = (E_G \times V_{G'}) \cup (V_G \times E_{G'}) \cup (E_G \times E_{G'})
    \end{align*}
    and
    \begin{align*}
        s_{G\boxtimes G'}((e,v')) &= (s_G(e),v'), \;\;\;
        s_{G\boxtimes G'}((v,e')) = (v,s_{G'}(e')), \;\;\;
        s_{G\boxtimes G'}((e,e')) = (s_G(e),s_{G'}(e')), \\
        t_{G\boxtimes G'}((e,v')) &= (t_G(e),v'), \;\;\;\:
        t_{G\boxtimes G'}((v,e')) = (v,t_{G'}(e')), \;\;\;\:
        t_{G\boxtimes G'}((e,e')) = (t_G(e),t_{G'}(e')) \;
    \end{align*}
    where $v\in V_G$, $v'\in V_{G'}$, $e\in E_G$, and $e' \in E_{G'}$.
\end{definition}

In the case of $G \boxtimes I$, there are two natural inclusions
\begin{align}
     &
     i_0 \colon G \to G \boxtimes I
     \;\;\; \text{given by} \;\;\;
     v \mapsto (v,0),
     \;
     e \mapsto (e,0)
     \nonumber
     \\
     \text{and} \;\;\;
     &
     i_1 \colon G \to G \boxtimes I
     \;\;\; \text{given by} \;\;\;
     v \mapsto (v,1),
     \;
     e \mapsto (e,1)
     \label{eq:BoxInclusion}
\end{align}
where $v \in V_G$ and $e\in E_G$.

A choice of quiver product and type of morphisms leads to the construction of digraph homotopies in the following manner.

\begin{definition}\label{def:StrongHomtopoic}
    Quiver maps $f_0,f_1\colon G \to H$ are \emph{$1$-step strong homotopic} if there is a quiver map
    \begin{align*}
        F\colon G\boxtimes I \to H
        \;\;\; \text{such that} \;\;\;
        F \circ i_0 = f_0,
        \;
        F \circ i_1 = f_1.
    \end{align*}
    In this case we write $f_0 \simeq_1^S f_1$.
    More generally, quiver maps $f,g \colon G \to H$ are \emph{strong homotopic} if they are related by the equivalence relation generated by $1$-step strong homotopies.
    In which case we write $f \simeq^S g$.
    Quivers $G$, $G'$ are \emph{strong homotopy equivalent} if there are quiver maps
    \begin{align*}
        f\colon G \to G',
        \;
        h\colon G' \to G
        \;\;\; \text{such that} \;\;\;
        h \circ f \simeq^S \text{id}_G,
        \;
        f \circ h \simeq^S \text{id}_{G'}.
    \end{align*}
    A quiver is called \emph{strongly contractable} if it is strong homotopy equivalent to a quiver consisting of a single vertex and no edges.
    Similarly, we also obtain the construction of \emph{$1$-step strong $h$-homotopic}, \emph{strong $h$-homotopic}, $\simeq^{Sh}$, \emph{strong $h$-homotopy equivalent}, and \emph{strongly $h$-contactable}, by making the above definitions with respect to quiver homomorphisms rather than quiver maps.
\end{definition}

In the reminder of this work, we often develop the strong homotopy and strong $h$-homotopy cases in parallel indicating inside brackets the differences in terminology.

\begin{remark}\label{rmk:OneStepHomotopyTransitivity}
    To be sure $\simeq_1^S$ generates an equivalence relation with a well defined composition of homotopy classes, we need to check that the \emph{transitivity} property. More precisely, given quiver maps $f \colon G_1 \to G_2$, $g,g' \colon G_2 \to G_3$, and $h \colon G_3 \to G_4$ we require that
    \[
        g \simeq_1^S g' \implies 
        g\circ f \simeq_1^S g' \circ f
        \;\;\; \text{and} \;\;\;
        h \circ g \simeq_1^S h \circ g'.
    \]
    If $F \colon G_2 \boxtimes I \to G_3$ is a one step strong homotopy from $g$ to $g'$, then $h \circ g \simeq_1^S h \circ g'$ is realised by $h \circ F$. Furthermore, let $F'\colon G_1 \boxtimes I \to G_2 \boxtimes I$ be given by 
    \begin{align*}
        F'((v,0)) &= (f(v),0),\; 
        F'((v,1)) = (f(v),1),\;
        F'((e,0)) = (f(e),0), \\
        F'((e,1)) &= (f(e),1),
        F'((v,0\to 1)) = (f(v), 0\to 1), 
        \; \text{and} \;
        F'((e,0\to 1)) = (f(e), 0\to 1)
    \end{align*}
    for each $v \in V_{G_1}$ and $e \in E_{G_1}$. Then we verify that $g\circ f \simeq_1^S g' \circ f$ using the map $F \circ F'$. Moreover, when $f$ is a homomorphism $F'$ is also a homomorphism. Therefore, transitivity of one step strong $h$-homotopies is obtained by precisely the same argument as above.
\end{remark}
 
A map $r \colon G \to G$ is called a \emph{retraction} onto subquiver $H$ of $G$ if $r(V_G) \subseteq V_H$, $r(E_G) \subseteq V_H \coprod E_H$ and the restrictions of $r$ to the vertices and edges of $H$ are the identities.
Retraction $r \colon G \to G$ is called a \emph{strong ($h$-)deformation retraction} if there is a strong ($h$-)homotopy from $r$ to the identity on $G$. As in classical homotopy theory, the existence of a strong ($h$-)deformation retraction of a quiver $G$ onto a subquiver $H$ implies that $G$ is strong ($h$-)homotopy equivalent to $H$.

The next theorem is obtained from \cite[Theorem 5.8 and Theorem 5.17]{Li2024}.

\begin{theorem}[\cite{Li2024}]
\label{thm:PreviousHomotopyInv}
    The following homotopy invariance properties hold.
    \begin{enumerate}[(1)]
        \item 
        If quiver maps $f,g \colon G \to G'$ satisfy $f \simeq^S g$, then 
        \[
            f_*=g_* \colon H_*^{\Delta,m}(G) \to H_*^{\Delta,m}(G).
        \]
        \item 
        If quiver homomorphisms $f,g \colon G \to G'$ satisfy $f \simeq^{Sh} g$, then 
        \[
            f_*=g_* \colon H_*^{\Delta,h}(G) \to H_*^{\Delta,h}(G).
        \]
    \end{enumerate}
\end{theorem}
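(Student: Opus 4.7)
The plan is to apply the acyclic carrier theorem (Theorem~\ref{thm:AcyclicCarriers}), mirroring the classical proof of homotopy invariance for singular homology of spaces. Since the equivalence relations $\simeq^S$ and $\simeq^{Sh}$ are generated by their one-step versions, and chain homotopy of chain maps composes well (see Remark~\ref{rmk:OneStepHomotopyTransitivity}), it suffices to treat the case where $f$ and $g$ differ by a single one-step strong (resp.\ strong $h$-) homotopy. Fix therefore a quiver map (resp.\ homomorphism) $F \colon G \boxtimes I \to G'$ with $F \circ i_0 = f$ and $F \circ i_1 = g$. By Proposition~\ref{prop:InitialFuctoriality}, $F_{\#}$ is a chain map, and the identities $f_{\#} = F_{\#} \circ i_{0,\#}$ and $g_{\#} = F_{\#} \circ i_{1,\#}$ reduce the problem to producing a chain homotopy between $i_{0,\#}$ and $i_{1,\#}$ as chain maps $C_*^{\Delta,\star}(G) \to C_*^{\Delta,\star}(G \boxtimes I)$ for $\star \in \{m,h\}$.

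To set up the acyclic carrier, I would assign to each singular $n$-simplex $\sigma \colon \Delta^n \to G$ the subcomplex $\varphi_n(\sigma) \subseteq C_*^{\Delta,\star}(G \boxtimes I)$ generated by those singular simplices of $G \boxtimes I$ that factor through $\sigma \boxtimes \mathrm{id}_I \colon \Delta^n \boxtimes I \to G \boxtimes I$. Compatibility with faces is immediate from $(\sigma \circ \delta_j) \boxtimes \mathrm{id}_I = (\sigma \boxtimes \mathrm{id}_I) \circ (\delta_j \boxtimes \mathrm{id}_I)$, giving $\varphi_{n-1}(\sigma \circ \delta_j) \subseteq \varphi_n(\sigma)$; and $i_{k,\#}(\sigma) \in \varphi_n(\sigma)$ for $k \in \{0,1\}$ because $i_k \circ \sigma = (\sigma \boxtimes \mathrm{id}_I) \circ i_k'$, where $i_k' \colon \Delta^n \to \Delta^n \boxtimes I$ is the corresponding level inclusion analogous to~\eqref{eq:BoxInclusion}. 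Provided each $\varphi_n(\sigma)$ is acyclic with respect to the standard augmentation sending every vertex to $1_R$, the acyclic carrier theorem yields the required chain homotopy.

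The main obstacle is acyclicity of $\varphi_n(\sigma)$, which I would establish via an explicit prism construction anchored at the vertex $v_0 := (0,0)$ of $\Delta^n \boxtimes I$: inspection of Definition~\ref{def:StrongBoxProduct} shows that $v_0$ is the source of an edge to every other vertex of $\Delta^n \boxtimes I$, using the edge types $(0 \to k, 0)$, $(0, 0 \to 1)$, and $(0 \to k, 0 \to 1)$. The prism operator $P\sigma = \sum_{i=0}^{n}(-1)^i \tau_i$, with $\tau_i \colon \Delta^{n+1} \to G \boxtimes I$ defined on vertices by $\tau_i(j) = (\sigma(j),0)$ for $j \leq i$ and $\tau_i(j) = (\sigma(j-1),1)$ for $j > i$, then gives rise to the standard identity $\partial P + P \partial = i_{1,\#} - i_{0,\#}$ and simultaneously delivers contracting homotopies on the individual $\varphi_n(\sigma)$. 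In the map case ($\star = m$) each $\tau_i$ is automatically a quiver map, since edges of $\Delta^{n+1}$ that would collapse under $\tau_i$ may be sent to vertices of $G \boxtimes I$. The harder case is $\star = h$, where a case analysis on the three types of edges of $\Delta^{n+1}$ under $\tau_i$ (the "horizontal" edges at level $0$ or $1$, the "vertical" edges of the form $(i,i+1)$, and the remaining "diagonal" edges) is required to confirm that each $\tau_i$ is a genuine homomorphism; the three families of edges of the strong box product supply exactly what is needed, and any identification $\sigma(j) = \sigma(k)$ produces a loop in $G$ that remains a valid edge of $G \boxtimes I$ under the product. Once acyclicity is established, the acyclic carrier theorem concludes the proof of both parts.
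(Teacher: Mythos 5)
The paper does not prove this result; Theorem~\ref{thm:PreviousHomotopyInv} is cited directly from \cite[Theorems 5.8 and 5.17]{Li2024}, so there is no in-paper proof to compare against. Your argument is therefore an independent reconstruction.

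The heart of your argument is sound: reduce to a single one-step homotopy $F$, write $f_\# = F_\# \circ i_{0\#}$ and $g_\# = F_\# \circ i_{1\#}$, and construct the explicit prism operator $P\sigma = \sum_{i=0}^{n}(-1)^{i}\tau_i$. Your checks that each $\tau_i$ is a quiver map (in the $m$ case, since collapsed edges may land on vertices) and a homomorphism (in the $h$ case, via the three edge types of $\boxtimes$) are both correct, and the standard telescoping cancellation gives $\partial P + P\partial = i_{1\#} - i_{0\#}$; post-composing with $F_\#$ then gives the required chain homotopy. This is exactly the device the present paper uses when it proves the stronger Theorem~\ref{thm:LocalStrongHInvIntial}, which builds a hybrid prism operator.

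The acyclic-carrier framing surrounding the prism is, however, confused and partly wrong. Once you have the explicit identity $\partial P + P\partial = i_{1\#} - i_{0\#}$, Theorem~\ref{thm:AcyclicCarriers} is superfluous. Moreover, the claim that the prism ``simultaneously delivers contracting homotopies on the individual $\varphi_n(\sigma)$'' does not hold: $P$ is a chain homotopy between two specific chain maps, not a contraction of the carrier subcomplexes $\varphi_n(\sigma)$, which contain many singular simplices outside the image of $P$. If you genuinely wanted the acyclic-carrier route you would need a separate contraction of each $\varphi_n(\sigma)$; and note that you defined $\varphi_n(\sigma)$ as the \emph{image} of $(\sigma\boxtimes\mathrm{id}_I)_\#$ rather than as the chains of the image subquiver, so acyclicity of $C_*^{\Delta,\star}(\Delta^n\boxtimes I)$ does not automatically transfer and would require an additional argument. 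None of this breaks the proof — the explicit prism identity already closes it — but the acyclic-carrier scaffolding should simply be removed.
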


As stated in \cite[\S 6]{Li2024}, it is straightforward to check that $H_*^{\Delta,i}$ is invariant under isometries.
In the case of digraphs, a more general construction of homotopies under which $H_*^{\Delta,i}$ is invariant is provided for a certain class of triangle collapsing maps on directed flag complexes in \cite[Definition 5.13]{Chaplin2024}.
In particular, these homotopies are derived from homotopies of path complexes, which generalise the original notion of digraph homotopy.

\section{Conditions on equivalence between singular simplicial morphisms and homologies}\label{sec:HomologyRelations}

In this section we examine the relationships between the singular simplicial homologies within the categories $\textbf{Quiv}_m$, $\textbf{Quiv}_h$, and $\textbf{Quiv}_i$. More precisely, we investigate under what conditions homologies $H^{\Delta,m}_*$, $H^{\Delta,h}_*$, $H^{\Delta,i}_*$ coincide and provide examples to demonstrate when these singular simplicial homologies are in general distinct for quivers with and without double edges, multiple edges, or loops. The main results presented in the section largely follow directly from the realisation that under certain conditions the different types of singular simplices are in fact identical, and we do not require the construction of quasi-isomorphisms between the chain complexes.

Throughout the section assume that $G$ is a quiver and $n\geq 0$ is an integer, unless otherwise stated.
Note that some results in this section require the axiom of choice if the quiver $G$ contains an infinite number of edges.

We first consider the case when the quiver has no loops.

\begin{proposition}\label{prop:SimplicalNoLoop}
    Let $G$ be a quiver without loops, then a singular simplicial map $f\colon \Delta^n \to G$ is an inclusion if and only if it is a homomorphism.
    In particular,
    \[
        H^{\Delta,i}_*(G) = H^{\Delta,h}_*(G)
        .
    \]
\end{proposition}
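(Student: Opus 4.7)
The plan is to show that every singular simplicial homomorphism $f\colon \Delta^n \to G$ is automatically an inclusion when $G$ has no loops; the reverse direction is immediate from the definitions since every inclusion is a homomorphism. Once this equivalence of singular simplex types is established, the chain complexes $C_*^{\Delta,h}(G)$ and $C_*^{\Delta,i}(G)$ have identical generating sets and identical differentials (the latter is determined entirely by pre-composition with the $\delta_i$), so their homologies coincide, yielding the claimed equality.

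First I would prove that $f_V$ is injective. Suppose for contradiction there are $i<j$ in $V_{\Delta^n} = \{0,\dots,n\}$ with $f_V(i) = f_V(j)$. By the edge set of $\Delta^n$, the pair $(i,j)$ is an edge of $\Delta^n$. Since $f$ is a homomorphism, $f_E((i,j)) \in E_G$ satisfies $s(f_E((i,j))) = f_V(i) = f_V(j) = t(f_E((i,j)))$, exhibiting a loop at $f_V(i)$ in $G$ and contradicting the hypothesis. Hence $f_V$ is injective.

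Next I would deduce that $f_E$ is injective. Any two distinct edges of $\Delta^n$ are pairs $(i,j)$ and $(i',j')$ with $i<j$ and $i'<j'$ and $(i,j)\neq(i',j')$; since the natural ordering on endpoints forces $\{i,j\}\neq\{i',j'\}$, the ordered pairs of endpoints are distinct. Because $f$ is a homomorphism, the source and target of each image edge are determined by $f_V$ on the endpoints, and since $f_V$ has just been shown injective, the ordered pairs $(f_V(i),f_V(j))$ and $(f_V(i'),f_V(j'))$ are distinct in $V_G\times V_G$. Any edge of $G$ has a uniquely determined source and target, so $f_E((i,j))$ and $f_E((i',j'))$ must be distinct edges of $G$. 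Therefore $f$ is an inclusion.

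The main (and only) obstacle is the first step, where the no-loops hypothesis enters; the rest is bookkeeping using the fact that $\Delta^n$ has at most one edge between any ordered pair of vertices. After combining with the trivial reverse direction we conclude the generator-by-generator identification $C_n^{\Delta,h}(G) = C_n^{\Delta,i}(G)$ for every $n$, and since the boundary formulas in \eqref{eq:InducedMapHomology} and the definition of $\partial_n$ depend only on compositions $f\circ \delta_i$, the differentials agree on the nose, giving $H_*^{\Delta,h}(G) = H_*^{\Delta,i}(G)$.
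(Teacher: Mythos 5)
Your proof is correct and follows essentially the same approach as the paper: both reduce to showing that a failure of injectivity on vertices would force an edge of $\Delta^n$ (which is a complete tournament) to map to a loop, contradicting the no-loop hypothesis. Your write-up is slightly more careful than the paper's in spelling out why injectivity of $f_V$ automatically yields injectivity of $f_E$, a step the paper passes over implicitly when it asserts that a non-inclusion homomorphism must identify two vertices.
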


\begin{proof}
    When $n=0$ homomorphisms $f \colon \Delta^0 \to G$ are the same as inclusions of a vertex.
    For $n\geq 1$, suppose $f\colon \Delta^n \to G$ is a homomorphism which is not an inclusion, then there must be a pair of distinct vertices sent to the same vertex under $f$.
    Any pair of distinct vertices in the $n$-simplex $\Delta^n$ are connected by a directed edge in some direction.
    Therefore, as $f$ is a homomorphism there must exist an edge of $\Delta^n$ sent to a loop under $f$, which contradicts the assumption that $G$ is a quiver without loops. 
\end{proof}

Given a quiver with at least one loop, the statement of the proposition above immediately fails to hold,
as in this case there is a homomorphism of $\Delta^1$ onto any loop and this is not an inclusion. In particular, $H^{\Delta,i}_*$ is unaffected by the removal or addition of loops in the quiver, while $H^{\Delta,h}_*$ might vary. However, it is straightforward to see that all homomorphisms from $\Delta^n$ to a loop in dimensions $n\geq 1$ generate an acyclic sub-chain complex of $C^{\Delta,h}_*(G)$. A demonstration that $H^{\Delta,i}_*$ and $H^{\Delta,h}_*$ differ in general is provided in Example~\ref{ex:DoubleCone}.

\begin{definition}
    A singular simplex $f \colon \Delta^n \to G$ is called \emph{degenerate} if there is an $i=1,\dots,n$ such that
    \[
        f(i-1) = f(i) = f(i-1 \to i).
    \]
    Otherwise, $f$ is called \emph{non-degenerate}.
\end{definition}

Ignoring the additional condition on the image of the edge,  the definition of degenerate singular simplices provided above coincides with the property satisfied by the elements of the conical basis corresponding to degenerated simplices in the usual chain complex associated to a $\Delta$-sets. Consequently, the $C^{\Delta,m}_*(G)$ sub-chain complex $C^{\Delta,dm}_*(G)$ generated by degenerate simplices is an acyclic direct summand of $C^{\Delta,m}_*(G)$. Therefore, we obtain that
\begin{equation}\label{eq:DegenerateQuotientHomology}
    H^{\Delta,m}_*(G) = H_*\left(\frac{C^{\Delta,m}_*(G)}{C^{\Delta,dm}_*(G)}\right).
\end{equation}

We now consider when $H^{\Delta,m}_*$ coincides with $H^{\Delta,i}_*(G)$ and $ H^{\Delta,h}_*(G)$ in general.

\begin{proposition}\label{prop:SimplicalNoDoubleMultiLoop}
    Let $G$ be a digraph without double edges,
    then non-degenerate singular simplicial maps coincide with singular simplicial homomorphisms and singular simplicial inclusions.
    In particular,
    \begin{align*}
        &
        H^{\Delta,i}_n(G) = H^{\Delta,h}_n(G)= H^{\Delta,m}_n(G)
        .
    \end{align*}
\end{proposition}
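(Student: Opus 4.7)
My plan is to decompose the statement into two assertions. First, $H^{\Delta,i}_n(G) = H^{\Delta,h}_n(G)$ is immediate from Proposition~\ref{prop:SimplicalNoLoop} since a digraph has no loops. Second, I will identify the set of non-degenerate singular simplicial maps with the set of singular simplicial inclusions; combined with equation~\eqref{eq:DegenerateQuotientHomology}, this will yield $H^{\Delta,m}_n(G) = H^{\Delta,i}_n(G)$. Inclusions are automatically non-degenerate (they are injective on vertices, so $f(i-1) = f(i)$ cannot occur), and homomorphisms coincide with inclusions by Proposition~\ref{prop:SimplicalNoLoop}, so it remains to prove that every non-degenerate map $f \colon \Delta^n \to G$ sends each edge of $\Delta^n$ to an edge of $G$ rather than to a vertex.

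The key step is a minimality argument that invokes the absence of double edges. Suppose toward a contradiction that some edge $(i,j) \in E_{\Delta^n}$ with $i < j$ has $f(i,j) \in V_G$, and pick such a pair with $j - i$ minimal; then $f(i) = f(j) = f(i,j)$. If $j - i = 1$ this is precisely the defining condition for $f$ to be degenerate, contradicting the hypothesis. If $j - i \geq 2$, minimality forces both $f(i, i+1)$ and $f(i+1, j)$ to be genuine edges of $G$; since $G$ has no loops, this gives $f(i+1) \neq f(i)$ together with an edge $f(i) \to f(i+1)$ and an edge $f(i+1) \to f(j) = f(i)$ in $G$, producing a double edge between $f(i)$ and $f(i+1)$ and contradicting the hypothesis. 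Hence every edge of $\Delta^n$ is sent to an edge of $G$, so $f$ is a homomorphism, and therefore an inclusion.

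Having identified non-degenerate maps with inclusions in each degree, I still need the boundary operators to agree. This is routine: restricting an inclusion along any face map $\delta_k$ is again an inclusion, so the quotient chain complex of equation~\eqref{eq:DegenerateQuotientHomology} is canonically identified with $C^{\Delta,i}_*(G)$ as a chain complex. The main obstacle is the double-edge argument of the second paragraph, which is the only place where the hypotheses on $G$ enter in an essential way; everything else is either a direct appeal to Proposition~\ref{prop:SimplicalNoLoop} or bookkeeping at the chain level.
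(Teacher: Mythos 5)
Your proof is correct and essentially the same as the paper's: both reduce $H^{\Delta,i}_n=H^{\Delta,h}_n$ to Proposition~\ref{prop:SimplicalNoLoop}, then identify non-degenerate singular maps with inclusions via a minimality argument that ends in a double-edge contradiction, and finally invoke equation~\eqref{eq:DegenerateQuotientHomology}. The only cosmetic difference is that you minimize over edges with $f(i\to j)\in V_G$ and conclude $f$ is a homomorphism (re-applying Proposition~\ref{prop:SimplicalNoLoop}), whereas the paper minimizes over vertex pairs with $f(a)=f(c)$ and deduces injectivity directly --- for a loopless digraph these two sets coincide, so the arguments are interchangeable.
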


\begin{proof}
    The equivalence between singular simplicial inclusions and homomorphisms follow immediately from Proposition~\ref{prop:SimplicalNoLoop}.
    Therefore, we need only prove that singular simplicial inclusions and non-degenerate singular simplicial maps coincide.
    
    When $n=0$ or $n=1$ all non-degenerate maps $f \colon \Delta^n \to G$ are inclusions.
    For $n\geq 2$, suppose that $f\colon \Delta^n \to G$ is a non-degenerate map which is not an inclusion. In this case, there must be a pair of vertices $a$ and $c$ of $\Delta^n$ such that $a<c$ and $f(a)=f(c)$. Without loss of generality, we may also assume that $c-a$ is minimal among all pair of vertices satisfying the same property.
    If $c=a+1$, as $f$ is non-degenerate the image of $f$ must contain a loop contradicting the fact that $G$ is a digraph.
    This means that, $c > a+1$ and there is a vertex $b$ of $\Delta^n$ such that $a<b<c$. In particular, as $\Delta^n$ is a simplex $a\to b$, $a\to c$, and $b\to c$ are edges of $\Delta^n$. 
    If $f(c)=f(a)=f(b)$ then $c-a$ would not be minimal among vertices such that $f(a) = f(c)$, contradicting our assumption.
    Hence, $f(c) = f(a) \neq f(b)$ and the image of $f$ contains a double edge as the image of edges $a\to b$ and $b\to c$, which contradicts the assumption in the statement of the proposition.
    Therefore, $f$ must be an inclusion as required.
    
    Finally, the isomorphism between modules $H^{\Delta,i}_n(G)$ and $H^{\Delta,m}_n(G)$ now follows from equation~\eqref{eq:DegenerateQuotientHomology}.
\end{proof}

Given a quiver $G$ with a double edge between vertices $u,v\in G$, a non-degenerate digraph map $f\colon\Delta^2 \to V_G$ that is not an inclusion is constructed by setting $f(0)= f(2) = u $ and $f(1) = v$.
Furthermore, when $u$ has a loop the map $f$ described above can be made into a homomorphism by sending $0\to 2$ to the loop at $u$.
\begin{center}
    \tikz {
        \node (a) at (0,0) {$u$};
        \node (b) at (2,0) {$v$};
        \node (d) at (-1,-0.145) {};
        \node (c) at (-1,0.145) {};
        \draw[->] (a) to [out=45,in=135] node[pos=0.5,above] {$f(0\to 1)$} (b);
        \draw[->] (b) to [out=225,in=315] node[pos=0.5,below] {$f(1 \to 2)$} (a);
        \draw[-] (a) to [out=210,in=270] (c);
        \draw[->] (d) to [out=90,in=140] (a);
        }
\end{center}

Similarly, given a quiver $G$ with a multiple edge between vertices $u$ and $v$, there is a non-degrade digraph map $f'\colon\Delta^2 \to G$ that is not an inclusion constructed by setting by setting $f'(0) = u$, $f'(1) = f'(2) = v$, and requiring that the edges from $0\to 1$ and $0 \to 2$ have different images under $f'$.
Furthermore, when $v$ has a loop the map $f'$ described above can be made into a homomorphism by sending $1\to 2$ to the loop at $v$.
\begin{center}
    \tikz {
        \node (a) at (0,0) {$u$};
        \node (b) at (2,0) {$v$};
        \node (d) at (3,-0.145) {};
        \node (c) at (3,0.145) {};
        \draw[->] (a) to [out=45,in=135] node[pos=0.5,above] {$f'(0\to 1)$} (b);
        \draw[->] (a) to [out=315,in=225] node[pos=0.5,below] {$f'(0 \to 2)$} (b);
        \draw[-] (b) to [out=330,in=270] (c);
        \draw[->] (d) to [out=90,in=30] (b);
    }
\end{center}

While the previous constructions established that singular non-degenerate maps, singular homomorphisms, and singular inclusions do not in general coincide, the following example demonstrates that the presence of double edges, multiple edge, and loops may result in distinct singular simplicial homology in each case.

\begin{example}\label{ex:DoubleCone}
   Let $G$ be the following digraph.
   \begin{center}
        \tikz {
            \node (u) at (0,0) {$u$};
            \node (v) at (2,0) {$v$};
            \node (w) at (1,2) {$w$};
            \node (w') at (1,-2) {$w'$};
            \draw[->] (u) to [out=35,in=145] (v);
            \draw[->] (v) to [out=215,in=325] (u);
            \draw[->] (w) -- (u);
            \draw[->] (w) -- (v);
            \draw[->] (w') -- (u);
            \draw[->] (w') -- (v);
        }
    \end{center}
    The inclusion and homomorphism singular simplicial homologies of $G$ are given by
    \begin{equation*}
        H^{\Delta,i}_n(G;\mathbb{Z})=H^{\Delta,h}_n(G;\mathbb{Z})
        =
        \begin{cases}
            \mathbb{Z} &
            \text{if} \:
            n=0 \: \text{or} \: n=2
            \\
            0 & \text{otherwise}.
        \end{cases}
    \end{equation*}
    Where the dimension $2$ homology is generated by the sum of the image of the two singular $2$-simplices with $0$ sent to $w$ subtracted from the sum of the two singular $2$-simplices with $0$ sent to $w'$.
    
    When a loop is added at vertex $u$ to obtain a quiver $G'$ from $G$, the chain complex $C^{\Delta,i}_*(G')$ remains unchanged and $H^{\Delta,i}_n(G';\mathbb{Z}) = H^{\Delta,i}_n(G;\mathbb{Z})$. However, there is a single additional singular generator of $C^{\Delta,h}_{1}(G)$
    lying on the loop, one additional non-degenerate singular $2$-simplex $f$ and two additional non-degenerate singular $3$-simplices $g$ and $h$, uniquely determined by their vertex images 
    \begin{align*}
        f(0)&=u,\:f(1)=v,\:f(2)=u,
        \\
        g(0)&=w,\:g(1)=u,\:g(2)=v,\:g(3)=u
        \\
        \text{and} \;
        h(0)&=w',\:h(1)=u,\:h(2)=v,\:h(3)=u.
    \end{align*}
    As a result, we have that $H^{\Delta,h}_n(G';\mathbb{Z})=0$ for each $n\geq 1$.
    In either case, the homology of the quivers $G$ and $G'$ is acyclic under $H^{\Delta,m}_*$ by Theorem~\ref{thm:PreviousHomotopyInv}, as both $G$ and $G'$ are strongly contractible.
    
    In addition, when the double edge between $u$ and $v$ in the original digraph $G$ is replaced with a single edge, any homology in dimension $2$ vanishes in all cases. Furthermore, the above homologies of $G$ and $G'$ remain unchanged if the double edge $v \to u$ is replaced with an additional edge $u \to v$ forming a multiple edge.
\end{example}

\section{Local strong $h$-homotopy}\label{sec:HomotopyTheory}

We now develop new quiver homotopy theory that will be applied in the subsequent sections of this work.
Motivated by the constrained conditions under which strong $h$-homotopies can be obtained \cite[Lemma 3.7]{Li2024}, the central construction of this section is a weaker notion called local strong $h$-homotopy under which $H_*^{\Delta,h}$ remains invariant. This result greatly improves upon the $H_*^{\Delta,h}$ strong $h$-homotopy invariance provided by \cite[Theorem 5.8]{Li2024} and serves as a key tool for revealing further properties of $H_*^{\Delta,h}$. Throughout the section assume that $G$ is a quiver and $n\geq 0$ an integer, unless otherwise stated.

We are not aware of the next definition having been made previously.
However, the definition would be a special case of the homotopy mapping cylinder construction \cite{Grigoryan2018} if we were making use of the box products of digraphs rather the strong product of quivers.

\begin{definition}\label{def:MappingCylinder}
    Let $G$ be a quiver and $G'$ a full subquiver.
    Then define the \emph{strong mapping cylinder} $M_{G' \hookrightarrow G}^S$ of the inclusion $G' \hookrightarrow G$ to be the quiver with vertices
    \[
        V_{M_{G' \hookrightarrow G}^S}
        = V_G \times \{ 0 \} \cup V_{G'} \times \{ 1 \}
    \]
    and edges
    \begin{align*}
        E_{M_{G' \hookrightarrow G}^S} =
        (E_G \times \{ 0 \}) \cup E_{G' \boxtimes I} \cup E(G,G')
    \end{align*}
    where
    \begin{align*}
        E(G,G') =
        \{ e_{e_G} \: | \:
        e_G \in E_G \setminus E_{G'}, 
        \; s(e_G) \in V_{G'} \; \text{and} \; t(e_G) \notin V_{G'},\; 
        \\
        \text{or}
        \; s(e_G) \notin V_{G'} \; \text{and} \; t(e_G) \in V_{G'} \}
    \end{align*}
    such that
    \[
        s(e_{e_G}) = 
        \begin{cases}
            (s(e_G),1) \ &\text{if} \; s(e_G) \in V_{G'}
            \\
            (s(e_G),0)  &\text{otherwise,}
        \end{cases}
        \;\;\;
        t(e_{e_G}) =
        \begin{cases}
            (t(e_G),1) \ &\text{if} \; t(e_G) \in V_{G'}
            \\
            (t(e_G),0)  &\text{otherwise.}
        \end{cases}
    \]
\end{definition}

Similarly to equation~\eqref{eq:BoxInclusion}, we can define two natural inclusions into the strong mapping cylinder.
The first being
\begin{align*}
     &
     i_0 \colon G \to V_{M_{G' \hookrightarrow G}^S}
     \;\;\; \text{given by} \;\;\;
     v \mapsto (0,v),
     \;
     e_G \mapsto (0,e_G)
     \nonumber
\end{align*}
where $v \in V_G$ and $e_G\in E_G$ and
the second $i_1 \colon G \to M_{G' \hookrightarrow G}^S$ given by
\begin{align*}
     i_1(v)=&
     \begin{cases}
        (v,1) \ &\text{if} \; v \in V_{G'}
        \\
        (v,0)  &\text{otherwise,}
    \end{cases}
     \\
     i_1(e_G)=&
     \begin{cases}
        (e_G,1) \ &\text{if} \; e_G \in E_{G'}
        \\
        (e_G,0) \ &\text{if} \; s(e_G) \notin V_{G'} \: \text{and} \: t(e_G) \notin V_{G'}
        \\
        e_{e_G}  &\text{otherwise.}
    \end{cases}
\end{align*}

The construction of $M_{G' \hookrightarrow G}^S$ is demonstrated in following example.

\begin{example}
    Let $G$ be the quiver
    \begin{center}
        \tikz {
            \node (u1) at (0,0) {$u_1$};
            \node (u2) at (2,0) {$u_2$};
            \node (u3) at (4,0) {$u_3$};
            \node (u4) at (6,0) {$u_4$};
            \node (u5) at (8,0) {$u_5$};
            \node (u6) at (10,0) {$u_6$};
            \node (u4b1) at (5.855,-0.75) {};
            \node (u4b2) at (6.145,-0.75) {};
            \node (u6b1) at (9.855,-0.75) {};
            \node (u6b2) at (10.145,-0.75) {};
            \draw[->] (u2) -- (u1);
            \draw[->] (u2) to [out=15,in=165] (u3);
            \draw[->] (u2) to [out=345,in=195] (u3);
            \draw[->] (u3) -- (u4);
            \draw[->] (u4) -- (u5);
            \draw[->] (u5) to [out=15,in=165] (u6);
            \draw[->] (u6) to [out=195,in=345] (u5);
            \draw[-] (u4) to [out=315,in=0] (u4b1);
            \draw[->] (u4b2) to [out=180,in=225] (u4);
            \draw[-] (u6) to [out=315,in=0] (u6b1);
            \draw[->] (u6b2) to [out=180,in=225] (u6);
        }
    \end{center}
    and $G'$ the full subquiver with $v_{G'} = \{ u_3, u_4 \}$.
    Then $M^S_{G'\hookrightarrow G}$ is the following quiver.
    \begin{center}
        \tikz {
            \node (u1) at (0,0) {$(u_1,0)$};
            \node (u2) at (2,0) {$(u_2,0)$};
            \node (u3) at (4,0) {$(u_3,0)$};
            \node (u4) at (6,0) {$(u_4,0)$};
            \node (u5) at (8,0) {$(u_5,0)$};
            \node (u6) at (10,0) {$(u_6,0)$};
            \node (u4b1) at (5.855,-0.75) {};
            \node (u4b2) at (6.145,-0.75) {};
            \node (u6b1) at (9.855,-0.75) {};
            \node (u6b2) at (10.145,-0.75) {};
            \node (u3a) at (4,2) {$(u_3,1)$};
            \node (u4a) at (6,2) {$(u_4,1)$};
            \node (u4ab2) at (6.145,2.75) {};
            \node (u4ab1) at (5.855,2.75) {};
            \draw[->] (u2) -- (u1);
            \draw[->] (u2) to [out=15,in=165] (u3);
            \draw[->] (u2) to [out=345,in=195] (u3);
            \draw[->] (u3) -- (u4);
            \draw[->] (u4) -- (u5);
            \draw[->] (u5) to [out=15,in=165] (u6);
            \draw[->] (u6) to [out=195,in=345] (u5);
            \draw[-] (u4) to [out=315,in=0] (u4b1);
            \draw[->] (u4b2) to [out=180,in=225] (u4);
            \draw[-] (u6) to [out=315,in=0] (u6b1);
            \draw[->] (u6b2) to [out=180,in=225] (u6);
            \draw[->] (u3) -- (u3a);
            \draw[->] (u3) -- (u4a);
            \draw[->] (u3a) -- (u4a);
            \draw[->] (u4) to [out=105,in=255] (u4a);
            \draw[->] (u4) to [out=75,in=285] (u4a);
            \draw[-] (u4a) to [out=45,in=0] (u4ab1);
            \draw[->] (u4ab2) to [out=180,in=135] (u4a);
            \draw[->] (u4a) -- (u5);
            \draw[->] (u2) to [out=30,in=240] (u3a);
            \draw[->] (u2) to [out=60,in=210] (u3a);
        }
    \end{center}
\end{example}
There is a retraction
\[
    r \colon M_{G' \hookrightarrow G}^S \to M_{G' \hookrightarrow G}^S
\]
onto $G$, given by
\begin{align*}
    r((v,0)) &= r((v,1)) = (v,0), \\
    \text{and} \;
    r((e_G,0)) &= r((e_G,1)) = r((e_G, 0\to 1))= r(e_{e_G}) = (e_G,0)
\end{align*}
where $v \in v_G$ and $e_G \in E_G$.
However, as demonstrated by the next example, $r \colon M_{G' \hookrightarrow G}^S \to M_{G' \hookrightarrow G}^S$ is not a strong $h$-retraction, which differs from the situation for mapping cylinders of topological spaces.

\begin{example}
    Consider the line digraph $I$ given in equation~\eqref{eq:Interval} with itself as a full subquiver, then
    \[
        M_{I\hookrightarrow I}^S \boxtimes I = (I \boxtimes I) \boxtimes I.
    \]
    However, there are no digraph homomorphisms from $M_{I\hookrightarrow I}^S \boxtimes I$ to $I$ as any digraph map is required to send at least one edge to a vertex.
    Therefore, $r\colon M_{I\hookrightarrow I}^S \to M_{I\hookrightarrow I}^S$ is not a strong $h$-deformation retraction.

    More generally, it is shown \cite[Theorem 3.8]{Li2024} that a $1$-step strong $h$-homotopy can only exist when there is a directed closed walks contained in the quiver.
\end{example}

We now provide a weaker notion of homotopy containing strong $h$-homotopy as a special case, under which we will show that $H^{h,\Delta}_*$ remains invariant.

\begin{definition}\label{def:LocalStrongHHomotopy}
    Two quiver maps $f_0,f_1\colon G_1 \to G_2$ are \emph{$1$-step local strong $h$-homotopic} if there is a full subquiver $G'_1$ of $G_1$ and quiver homomorphism
    \begin{align*}
        F\colon M^S_{G'_1 \hookrightarrow G_1} \to G_2
        \;\;\; \text{such that} \;\;\;
        F \circ i_0 = f_0,
        \;
        F \circ i_1 = f_1.
    \end{align*}
    In which case we write $f_0 \simeq_1^{lSh} f_1$.
    Quiver maps $f$ and $g$ are \emph{local strong $h$-homotopic} if they are related by the equivalence relation generated by $1$-step local strong $h$-homotopyies.
    In which case we write $f \simeq^{lSh} g$.
    Quivers $G_1$, $G_2$ are \emph{locally strong $h$-homotopy equivalent} if there are quiver maps
    \begin{align*}
        f\colon G_1 \to G_2,
        \;
        h\colon G_2 \to G_1
        \;\;\; \text{such that} \;\;\;
        h \circ f \simeq^{lSh} \text{id}_{G_1},
        \;
        f \circ h \simeq^{lSh} \text{id}_{G_2}.
    \end{align*}
    A quiver is called \emph{locally strongly $h$-contactable} if it is strong homotopy equivalent to a quiver consisting of a single vertex and no edges.
\end{definition}

We note that to fully justify the definition of local strong homotopies we must check that it is compatible with composition of homotopy classes. This can be achieved similarly to the construction in Remark~\ref{rmk:OneStepHomotopyTransitivity}.

Since strong $h$-homotopy is a special case of local strong $h$-homotopy, the next theorem is a stronger version of the homotopy invariance of $H^{h,\Delta}_*$ with respect to strong $h$-homotopy from \cite[Theorem 5.17]{Li2024}.

\begin{theorem}\label{thm:LocalStrongHInvIntial}
    Let $\phi,\varphi \colon G_1 \to G_2$ be quiver homomorphisms such that $\phi \simeq^{lSh} \varphi$, then
    \[
        \phi_* = \varphi_* \colon H^{h,\Delta}_*(G_1) \to H^{h,\Delta}_*(G_2).
    \]
\end{theorem}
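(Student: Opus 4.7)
The plan is to produce a chain homotopy between $\phi_\#$ and $\varphi_\#$ via the acyclic carrier theorem (Theorem~\ref{thm:AcyclicCarriers}). Since local strong $h$-homotopy is the equivalence relation generated by $1$-step local strong $h$-homotopies, and since chain homotopies compose, it suffices to handle the $1$-step case. Assume therefore that we have a full subquiver $G'_1 \subseteq G_1$ and a quiver homomorphism $F \colon M^S_{G'_1 \hookrightarrow G_1} \to G_2$ satisfying $\phi = F \circ i_0$ and $\varphi = F \circ i_1$. For each singular simplex homomorphism $f \colon \Delta^n \to G_1$, let $\Delta^n_f$ denote the full subquiver of $\Delta^n$ on vertex set $\{v \in V_{\Delta^n} : f(v) \in V_{G'_1}\}$. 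Because $G'_1$ is full and $f$ is a homomorphism, the assignment $(v,\epsilon) \mapsto (f(v),\epsilon)$, with the evident extension on all edge types (including cross edges $e_{e_{\Delta^n}} \mapsto e_{f(e_{\Delta^n})}$), defines a quiver homomorphism $\tilde{f} \colon M^S_{\Delta^n_f \hookrightarrow \Delta^n} \to M^S_{G'_1 \hookrightarrow G_1}$ with $\tilde{f} \circ i_0 = i_0 \circ f$ and $\tilde{f} \circ i_1 = i_1 \circ f$.

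Let $\Phi(f) \subseteq C^{\Delta,h}_*(G_2)$ be the chain subcomplex $C^{\Delta,h}_*(B_f)$, where $B_f$ is the subquiver of $G_2$ generated by the image of $F \circ \tilde{f}$. Monotonicity, $\Phi(f \circ \delta_j) \subseteq \Phi(f)$, follows from the factorisation $\widetilde{f \circ \delta_j} = \tilde{f} \circ \widetilde{\delta_j}$, which implies $B_{f \circ \delta_j} \subseteq B_f$. Moreover, both $\phi_\#$ and $\varphi_\#$ are augmentation-preserving chain maps carried by $\Phi$, since $\phi \circ f = F \circ \tilde{f} \circ i_0$ and $\varphi \circ f = F \circ \tilde{f} \circ i_1$ both land inside $B_f$.

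The main technical obstacle is proving that each $\Phi(f)$ is acyclic. The strategy is to exploit a cone structure on $M^S_{\Delta^n_f \hookrightarrow \Delta^n}$: the vertex $w = (n,1)$ when $f(n) \in V_{G'_1}$, and $w = (n,0)$ otherwise, serves as a universal sink---a direct verification using the definition of $M^S$ and the simplex structure of $\Delta^n$ shows that every other vertex admits a unique edge to $w$, while $w$ itself has no outgoing edges and no loops. This yields an explicit chain contraction on $C^{\Delta,h}_*(M^S_{\Delta^n_f \hookrightarrow \Delta^n})$: for $\sigma \colon \Delta^k \to M^S_{\Delta^n_f \hookrightarrow \Delta^n}$ with $\sigma(k) \neq w$, set $h(\sigma) = (-1)^{k+1}\sigma'$, where $\sigma'$ adjoins $w$ as the new final vertex with the canonical connecting edges; set $h(\sigma) = 0$ whenever $\sigma(k) = w$; and augment by $h_{-1}(1) = w$. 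A routine verification, using that $w$ has no outgoing edges so that $\sigma(i) = w$ forces $i = k$, yields $\partial h + h\partial = \mathrm{id}$. The apex $w^* := F(\tilde{f}(w))$ retains the analogous sink role within $B_f$, since any outgoing edge from $w^*$ in $B_f$ would have to be the image of an edge out of a vertex of $M^S_{\Delta^n_f \hookrightarrow \Delta^n}$ distinct from $w$ that $F \circ \tilde{f}$ identifies with $w^*$. This is the delicate point: one handles it by separating off the chains supported on such identifications as an acyclic direct summand, leaving a chain contraction on the remainder of $\Phi(f)$ that pushes forward from the contraction on the source. Once acyclicity of $\Phi(f)$ is established, Theorem~\ref{thm:AcyclicCarriers} provides the required chain homotopy between $\phi_\#$ and $\varphi_\#$, and hence $\phi_* = \varphi_*$ on $H^{\Delta,h}_*$.
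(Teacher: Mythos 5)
Your approach is genuinely different from the paper's: you aim for the chain homotopy via the acyclic carrier theorem with carrier $\Phi(f)=C^{\Delta,h}_*(B_f)$, whereas the paper constructs an explicit modified prism operator $P$ and verifies $\partial P+P\partial=\varphi_\#-\phi_\#$ directly. The setup of your carrier is sound: the induced map $\tilde{f}\colon M^S_{\Delta^n_f\hookrightarrow\Delta^n}\to M^S_{G'_1\hookrightarrow G_1}$ is well defined, monotonicity $B_{f\circ\delta_j}\subseteq B_f$ follows from $\widetilde{f\circ\delta_j}=\tilde{f}\circ\widetilde{\delta_j}$, both $\phi_\#$ and $\varphi_\#$ are carried, and the sink vertex $w$ of $M^S_{\Delta^n_f\hookrightarrow\Delta^n}$ (with a unique incoming edge from every other vertex, no loop, and no outgoing edge) is correctly identified, yielding a cone contraction of $C^{\Delta,h}_*(M^S_{\Delta^n_f\hookrightarrow\Delta^n})$.

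The gap is exactly at the point you flag as delicate, and it cannot be repaired by ``separating off the chains supported on such identifications as an acyclic direct summand'': $\Phi(f)=C^{\Delta,h}_*(B_f)$ is simply not acyclic in general. Take $G_1=G'_1=\Delta^1$, $f=\mathrm{id}$, so $M^S_{\Delta^1_f\hookrightarrow\Delta^1}=\Delta^1\boxtimes I$, and let $G_2$ have vertices $a,b,c$ with two edges $a\to b$, two edges $b\to c$, one edge $a\to c$, and no loops. Sending $(0,0)\mapsto a$, $(1,0)\mapsto b$, $(0,1)\mapsto b$, $(1,1)\mapsto c$ and the five edges bijectively to the five edges of $G_2$ defines a valid homomorphism $F\colon\Delta^1\boxtimes I\to G_2$, hence a $1$-step local strong $h$-homotopy; here $B_f=G_2$. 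But $G_2$ has four singular $2$-simplex homomorphisms and no $3$-simplices, and a direct rank count gives $H^{\Delta,h}_2(G_2)\cong\mathbb{Z}$, so $\Phi(f)$ fails to be acyclic and Theorem~\ref{thm:AcyclicCarriers} does not apply. The underlying issue is that when $F\circ\tilde{f}$ identifies vertices of $M^S_{\Delta^n_f\hookrightarrow\Delta^n}$ not joined by an edge (possible precisely between $(j,1)$ and $(i,0)$ with $j<i$, $j,i\in V_{\Delta^n_f}$), $B_f$ can acquire multiple parallel edges with no compensating loop, and neither the cone contraction nor a strong $h$-deformation retraction to a loop is available. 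The paper's prism construction sidesteps this entirely: it never needs the image subquiver to be acyclic, only that the specific prism chain lies in $C^{\Delta,h}_{n+1}(G_2)$ and satisfies the homotopy identity, which it does by an explicit index computation.
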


\begin{proof}
    By the construction of local strong homotopies, it is sufficient to consider only $1$-step local strong $h$-homotopies. Therefore, assume $\phi \simeq_1^{lSh} \varphi$. This implies that there exists an full subquiver $G'_1$ of $G_1$ and a quiver homomorphism $F\colon M^S_{G_1' \hookrightarrow G_1} \to G_2$ such that $F \circ i_0 = \phi$, $ F \circ i_1 = \varphi$.
    
    Let $f \colon \Delta^n \to G_1$ be a singular $n$-simplex homomorphism.
    When $f(i) \in V_{G_1} \setminus V_{G'_1}$ for each $i=0,\dots,n$, then by construction of $M^S_{G_1' \hookrightarrow G_1}$ and $F$ we have $\phi_{\#} \circ f = \varphi_{\#} \circ f$.
    Alternatively, when $f(i) \in V_{G'_1}$ for each $i=0,\dots,n$, then $\phi_{\#} \circ f$ and $\varphi_{\#} \circ f$ are related by the boundary of a quiver construction analogous to the classical prism operator \cite[proof of Corollary 2.11]{Hatcher2002}.
    More generally, we require the construction of a hybrid prism operator transitioning between the two cases above, which we obtain as follows. 

    Let $f_{M^S_{G'_1 \hookrightarrow G_1}}$ be the subquiver of $M^S_{G_1' \hookrightarrow G_1}$ on vertex set
    \[
        \{
        (f(i),0)
        \: | \:
        i=0,\dots,n
        \}
        \cup
        \{
        (f(i),1)
        \: | \:
        i = 0,\dots,n
        \: \text{with} \:
        f(i) \in G'_1
        \}
    \]
    and with edges
    \begin{align*}
        &(f(j\to k),0);
        \\
        & (f(j\to k),1), \;
        (f(j\to k),0\to 1), \;
        \text{or} \; (f(i),0\to 1) \;
        \text{when} \; f(i),f(j),f(k) \in G'_1;
        \\
        & e_{f(j\to k)} \; \text{when} \; 
        f(j) \in G'_1 \; \text{and} \;  f(k) \notin G'_1, \; \text{or} \;
        f(j) \notin G'_1 \; \text{and} \;  f(k) \in G'_1
    \end{align*}
    for $i=0,\dots,n$ and integers $0\leq j < k \leq n$.
    Define the indicator function $\mathcal{I}_f\colon \{0,\dots,n \}\to \{0,1\}$ by
    \[
        \mathcal{I}_f(i) =
        \begin{cases}
            1 & \text{if} \; f(i) \in G'_1
            \\
            0 & \text{otherwise}.
        \end{cases}
    \]
    Then the modified prism operator is given by
    \[
        P(f) = \sum_{i=0}^n
        \mathcal{I}_f(i)
        (-1)^i
        F\left(
        f_{M^S_{G'_1 \hookrightarrow G'_1}}|_{(f(0),0),\dots,(f(i),0),(f(i),\mathcal{I}_f(i)),\dots,(f(n),\mathcal{I}_f(n))}\right)
    \]
    where
    $f_{M^S_{G'_1 \hookrightarrow G'_1}}|_{(f(0),0),\dots,(f(i),0),(f(i),\mathcal{I}_f(i)),\dots,(f(n),\mathcal{I}_f(n))}$ indicates the singular simplex
    $g_i\colon \Delta^{n+1} \to M^S_{G'_1 \hookrightarrow G'}$
    given on vertices by
    \[
        g_i(t) = 
        \begin{cases}
            (f(t),0) & \text{if} \: t \leq i \\
            (f(t),\mathcal{I}_f(t)) & \text{otherwise}
        \end{cases}
    \]
    for $t=0,\dots,n+1$ and on edges by
    \begin{equation*}
        g_i(j\to k) =
        \begin{cases}
            (f(j\to k),0) & \text{if} \:  i \geq k \: \text{or} \: \mathcal{I}_f(j) = \mathcal{I}_f(k) = 0 \\
            (f(j\to k),1) & \text{if} \: i < j \: \text{and} \: \mathcal{I}_f(j) = \mathcal{I}_f(k) = 1 \\
            (f(j\to k), 0\to 1) & \text{if} \: j \leq i < k \: \text{and} \: \mathcal{I}_f(j) = \mathcal{I}_f(k) = 1  \\
            e_{f(j\to k)} & \text{otherwise}
        \end{cases}
    \end{equation*}
    for integers $0\leq j < k \leq n+1$.

    When $\mathcal{I}_{f}(i) = 0$ for some singular $n$-simplex $f\colon \Delta^n \to G$ and each $i=0,\dots,n$, then $P(f) = 0$. In this case we have $\partial_{n+1}P(f) = P\partial_{n}(f) = 0$ and $\phi_{\#}(f) = \varphi_{\#}(f)$.
    Otherwise, define $i_{\text{min}}$ and $i_{\text{max}}$ to be the minimum and maximum value among $0,\dots,n$ such that $\mathcal{I}_{f}(i_{\text{min}}) \neq 0$ and $\mathcal{I}_{f}(i_{\text{max}}) \neq 0$, respectively.
    Making use of this notation, we obtain that
    \begin{align*}
        \partial_{n+1} P(f) = &
        \sum_{0\leq j \leq i \leq n}
        \mathcal{I}_f(i)
        (-1)^i (-1)^j
        F\left(f_{M^S_{G'_1 \hookrightarrow G'_1}}|_{(f(0),0),\dots,\widehat{(f(j),0)},\dots,(f(i),0),(f(i),\mathcal{I}_f(i)),\dots,(f(n),\mathcal{I}_f(n))}\right)
        \\ + &
        \sum_{0\leq i \leq j \leq n}
        \mathcal{I}_f(i)
        (-1)^i (-1)^{j+1}
        F\left(f_{M^S_{G'_1 \hookrightarrow G'_1}}|_{(f(0),0),\dots,(f(i),0),(f(i),\mathcal{I}_f(i)),\dots,,\widehat{(f(j),\mathcal{I}_f(j))},\dots,(f(n),\mathcal{I}_f(n))}\right)
        \\ = &
        \mathcal{I}_f(i_{\text{min}})
        F\left(f_{M^S_{G'_1 \hookrightarrow G'_1}}|_{(f(0),0),\dots,\widehat{(f(i_{\text{min}}),0)},(f(i_{\text{min}}),\mathcal{I}_f(i_{\text{min}})),\dots,(f(n),\mathcal{I}_f(n))}\right)
        \\ + &
        \sum_{0\leq j < i \leq n}
        \mathcal{I}_f(i)
        (-1)^i (-1)^j
        F\left(f_{M^S_{G'_1 \hookrightarrow G'_1}}|_{(f(0),0),\dots,\widehat{(f(j),0)},\dots,(f(i),0),(f(i),\mathcal{I}_f(i)),\dots,(f(n),\mathcal{I}_f(n))}\right)
        \\ + &
        \sum_{0\leq i < j \leq n}
        \mathcal{I}_f(i)
        (-1)^{i-1} (-1)^{j}
        F\left(f_{M^S_{G'_1 \hookrightarrow G'_1}}|_{(f(0),0),\dots,(f(i),0),(f(i),\mathcal{I}_f(i)),\dots,\widehat{(f(j),\mathcal{I}_f(j))},\dots,(f(n),\mathcal{I}_f(n))}\right)
        \\ - &
        \mathcal{I}_f(i_{\text{max}})
        F\left(f_{M^S_{G'_1 \hookrightarrow G'_1}}|_{(f(0),0),\dots,(f(i_{\text{max}}),0),\widehat{(f(i_{\text{max}}),\mathcal{I}_f(i_{\text{max}}))},\dots,(f(n),\mathcal{I}_f(n))}\right)
        \\ = &
        \varphi_{\#}(f) - P \partial_{n} - \phi_{\#}(f)
        .
    \end{align*}
    Hence,
    \[
        \partial_{n+1} P + P \partial_{n} = \varphi_{\#} - \phi_{\#}
    \]
    making $P$ a chain homotopy between $\phi_\#$ and $\varphi_\#$.
    Therefore, $\phi_{\#} $ and $\varphi_{\#}$
    induce the same graded module homomorphism on homology as required.
\end{proof}

The next example demonstrates a further useful context in which $H^{h,\Delta}_*$ is invariant that is not captured by considering homomorphisms from a strong mapping cylinders alone.  

\begin{example}
    Consider the line digraph $I$ given in equation~\eqref{eq:Interval}. In particular, $H^{\Delta,h}_*(I)$ is acyclic.
    There are three possible full subquivers of $I$; $I$ itself, $I_0$ consisting of only vertex $0$, and $I_1$ consisting of only vertex $1$. In all three cases, there does not exist a digraph homomorphism
    \[
        h\colon M_{I\hookrightarrow I}^S \to I, \;\;\; h\colon M_{I_0\hookrightarrow I}^S \to I,\;\;\;\; \text{or} \;\;\; h\colon M_{I_1\hookrightarrow I}^S \to I.
    \]
    However, if we form a quiver $I'$ by placing an additional loop at vertex $1$ of $I$ and consider the full subquiver $I'_1$ consisting of vertex $1$ along with its loop, there are digraph homomorphisms
    \[
        h\colon M_{I'_1\hookrightarrow I'}^S \to I'.
    \]
    Moreover, $H^{\Delta,h}_*(I')$ remains acyclic.
    The homomorphisms $h$ can be chosen such that $i_0 \circ h = \text{id}_{I'}$, and when factored though $i_1$ the image consists precisely of $I'_1$. Therefore, $I'_1$ is a strong $h$-deformation retraction of $I'$.
\end{example}

The example above motivates the following definitions.

\begin{definition}\label{def:WeakLocalStrongHHomotopy}
    A loop $l$ at $v$ is called \emph{degenerate} if it is the unique loop at $v$ and for any $u\in V_G$ such that $u\neq v$ with an $e \in E_G$ such that
        \[
            s(e) = u 
            \;\: \text{and} \;\:
            t(e) = v
            \;\;\; \text{or} \;\;\;
            s(e) = v
            \;\: \text{and} \;\:
            t(e) = u
        \]
        the edge $e$ is not a multiple edge, and either
        \begin{enumerate}[(i)]
            \item 
            $u$ has a loop or
            \item
            $e$ is not a double edge.
        \end{enumerate}
        A subquiver $G'$ obtained from $G$ by removing one or more degenerate loops from $E_G$ is called a \emph{loop contraction} of $G$.
\end{definition}

\begin{definition}\label{def:QuiverLocalStrongHomotopy}
    A pair of quivers $G_1$, $G_2$ are \emph{$1$-step  weak local strong $h$-homotopy equivalent} if 
    \begin{enumerate}[(1)]
        \item 
        there are quiver homomorphisms $f\colon G_1 \to G_2$, $g\colon G_2 \to G_1$ such that
        $g \circ f \simeq_1^{lSh} \text{id}_{G_1}$, $f \circ g \simeq_1^{lSh} \text{id}_{G_2}$;
        \item 
        $G_1$ is a loop contraction of $G_2$;
        \item 
        or $G_2$ is a loop contraction of $G_1$.
    \end{enumerate}
    In which case we write $G_1 \simeq^{wlSh}_1 G_2$.
    Quivers $G_1$ and $G_2$ are \emph{weak local strong $h$-homotopic} if they are related by the equivalence relation generated by (1), (2) and (3) above, in which case we write $G_1 \simeq^{wlSh} G_2$.
    A quiver is called \emph{weak local strongly $h$-contactable} if it is weak local strong $h$-homotopy equivalent to a quiver on one vertex with no edges.
\end{definition}

We will prove at the end of Section~\ref{sec:PartialFlagComplex} that $H_*^{\Delta}$ is invariant under weak local strong $h$-homotopy of quivers. To complete the section we consider some partial results in this direction. In order make these statements, we introduce the following terminology.

Let $G$ be a quiver and $l$ a loop in $G$ and recall that we assume all coefficients lie in a commutative ring $R$. Then consider the $C_*^{\Delta,h}(G)$ submodules,
\begin{align*}
    C_*^{\Delta,h,l}(G) =
    R[\{ 
    & f \colon \Delta^n \to G 
    \: | \:
    f \: \text{is a homomorphism}, \:
    f(a\to c) = l \:
    \text{for some} \: 0 \leq a < c \leq n,
    \\ &
    \text{and} \:
    \forall 
    \: 0 \leq a < b < c \leq n 
    \: \text{with} \:
    f(a\to c) = l \:
    \text{we have} \:
    f(b) = f(a)
    \}]
\end{align*}
and 
\begin{align*}
    C_*^{\Delta,h,ld}(G) =
    R[\{ &
    f \colon \Delta^n \to G 
    \: | \:
    f \: \text{is a homomorphism}, \: \text{and}
    \\ & \exists \:
    0 \leq a < b < c \leq n \:
    \text{with} \:
    f(a\to c) = l
    \: \text{and} \:
    f(b) \neq f(a)
    \}].
\end{align*}
Any singular simplex containing the loop $l$ in its image lies in $C_*^{\Delta,h,l}(G)$ or $C_*^{\Delta,h,ld}(G)$ and $C_*^{\Delta,h,l}(G) \cap C_*^{\Delta,h,ld}(G) = \emptyset$. Crucially, when $l$ is degenerate singular simplices containing the loop $l$ in their image are precisely the generators of $C_*^{\Delta,h}(G)$ that no longer exist after a loop contraction removing $l$.
The key distinction between singular simplicies $f\colon \Delta^n \to G$ generating $C_*^{\Delta,h,l}(G)$ and $C_*^{\Delta,h,ld}(G)$ is provided by the presence of a $G$ subquiver of the form
\begin{center}
    \tikz {
        \node (a) at (0,0) {$f(a)=f(c)$};
        \node (b) at (3.25,0) {$f(b)$};
        \node (d) at (-1.5,-0.145) {};
        \node (c) at (-1.5,0.145) {};
        \node (l) at (-1.725,0) {$l$};
        \draw[->] (a) to [out=35,in=145] (b);
        \draw[->] (b) to [out=215,in=325] (a);
        \draw[-] (a) to [out=210,in=270] (c);
        \draw[->] (d) to [out=90,in=140] (a);
        }
\end{center}
in the image of $f$ when $f \in C_*^{\Delta,h,ld}(G)$. Such an image of a singular simplex homomorphism can only occur when $l$ is incident to a double edge and in this case $H_*^{\Delta,h}(G)$ can be altered after the removal of $l$ if there is no loop at $f(b)$, as made explicit in Example~\ref{ex:DoubleCone}. The case of $C_*^{\Delta,h,l}(G)$ alone is easier handle. To demonstrate this we make use of the following operations.

Let $f \colon \Delta^n \to G$ be a singular $n$-simplex homomorphism in $C_*^{\Delta,h}(G)$ such that $f(i)$ has a loop $l$ for some $i=0,\dots,n$. Then define singular $(n+1)$-simplex $s_{i,l}(f)$ in $C_{n+1}^{\Delta,h}(G)$, given on vertices by
\[
    s_{i,l}(f)(t) = 
    \begin{cases}
        f(t) &
        \text{if} \: t \leq i \\
        f(i) &
        \text{if} \: t = i + 1 \\
        f(t-1) &
        \text{otherwise}
    \end{cases}
\]
for $t=0,\dots,n+1$ and on edges by
\[
    s_{i,l}(f)(j \to k) = 
    \begin{cases}
        f(j \to k) &
        \text{if} \: k \leq i \\
        f(j \to i) &
        \text{if} \: j< i, \: k = i + 1 \\
        l &
        \text{if} \: j = i, \: k = i + 1 \\
        f(i \to k-1) &
        \text{if} \: j = i+1 \\
        f(j-1 \to k - 1) &
        \text{otherwise}
    \end{cases}
\]
for integers $0 \leq j < k \leq n+1$.

Analogously to the construction of simplicial sets, we can think of $s_{i,l}(f)$ as a degeneracy operator on a vertex with a loop. However, in the present case we require that a singular simplex homomorphism contains a loop in its image, since a homomorphism cannot map an edge to a vertex. Nevertheless, given a singular simplex homomorphism $\Delta^n \colon G$ such that $f(i)$ has a loop $l$ for some $i = 0,\dots,n$, $s_{i,l}(f)$ and the face operators $\delta_j$ satisfy the usual relations
\begin{equation}\label{eq:LoopExtensionRelations}
    s_{i,l}(f) \circ \delta_j = 
    \begin{cases}
        s_{i-1,l}(f\circ \delta_j)
        & \text{if} \: j < i \\
        f
        & \text{if} \: i = j \: \text{or} \: i + 1 = j \\
        s_{i,l}(f\circ \delta_{j-1})
        & \text{if} \: j > i + 1
    \end{cases}
\end{equation}
for each $j = 0,\dots,n+1$.

\begin{lemma}\label{lem:FirstLoopCase}
    Let $G$ be a quiver and $l$ the unique loop in $G$ at vertex $v_l\in V_G$ with no multiple edges indecent to $v_l$. Then $C_*^{\Delta,h,l}(G)$ is an acyclic sub-chain complex of $C_*^{\Delta,h}(G)$. 
\end{lemma}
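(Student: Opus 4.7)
The plan is to establish acyclicity by constructing an explicit chain contraction on $C_*^{\Delta,h,l}(G)$ using the loop-degeneracy operators $s_{i,l}$ introduced immediately before the lemma. The first observation is that, because $l$ is the unique loop at $v_l$ and no multiple edges are incident to $v_l$, any edge of $\Delta^n$ whose two endpoints both map to $v_l$ under a singular simplex homomorphism $f$ must be sent to $l$. Consequently the $v_l$-preimage $\{i \in \{0,\dots,n\} : f(i) = v_l\}$ of a generator $f$ of $C_n^{\Delta,h,l}(G)$ is forced to be an interval $[i_0(f), i_1(f)]$ of size at least two, and the two defining conditions of $C_n^{\Delta,h,l}(G)$ reduce to this interval-shape property.

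I would then verify that $C_*^{\Delta,h,l}(G)$ is closed under $\partial$. For a generator $f$ with $v_l$-interval $[i_0,i_1]$, each face $f\circ\delta_k$ still has its $v_l$-preimage forming an interval of size at least two and so lies in $C_{n-1}^{\Delta,h,l}(G)$, except in the degenerate case $i_1 = i_0+1$ where the two faces $f\circ\delta_{i_0}$ and $f\circ\delta_{i_1}$ each contain only a single $v_l$-vertex. In that exceptional case, the no-multiple-edges hypothesis at $v_l$ forces $f\circ\delta_{i_0} = f\circ\delta_{i_1}$ as singular simplices, so the two terms cancel in $\partial f$ thanks to their opposite signs and no generator outside the subcomplex survives.

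Next I would define
\[
    T \colon C_n^{\Delta,h,l}(G) \to C_{n+1}^{\Delta,h,l}(G),
    \qquad
    T(f) = (-1)^{i_0(f)} s_{i_0(f),l}(f).
\]
Because $s_{i_0,l}$ inserts a duplicate copy of $v_l$ immediately after position $i_0(f)$ with the new edge equal to $l$, the $v_l$-interval of $T(f)$ is $[i_0(f), i_1(f)+1]$, so $T(f) \in C_{n+1}^{\Delta,h,l}(G)$. Computing $\partial T(f) + T(\partial f)$ by means of the face-degeneracy identities of equation~\eqref{eq:LoopExtensionRelations}, the sums split according to whether the face index $k$ in $f\circ\delta_k$ lies below $i_0(f)$, equals $i_0(f)$ or $i_0(f)+1$, or exceeds $i_0(f)+1$. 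A sign computation shows that the $k<i_0(f)$ sums cancel between $\partial T(f)$ and $T(\partial f)$, the $k>i_0(f)+1$ sums cancel after a reindexing shift, and a single surviving term $s_{i_0(f),l}(f\circ\delta_{i_0(f)})$, replaced by $s_{i_0(f),l}(f\circ\delta_{i_0(f)+1})$ in the size-two case, remains. The uniqueness of edges incident to $v_l$ then identifies this surviving term with $f$ itself.

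The main obstacle is the bookkeeping in the borderline case $i_1(f) = i_0(f)+1$: the two outer faces $f\circ\delta_{i_0}$ and $f\circ\delta_{i_1}$ fall outside the subcomplex and hence contribute zero to $T(\partial f)$, shifting the cancellation pattern, and it is precisely this gap that leaves the term $s_{i_0,l}(f\circ\delta_{i_0+1}) = f$ uncanceled after summation. Since $T$ satisfies $\partial T + T\partial = \mathrm{id}$ in every positive degree and $C_0^{\Delta,h,l}(G) = 0$ (a singular $0$-simplex has no edges and so cannot contain $l$ in its image), every homology group of $C_*^{\Delta,h,l}(G)$ vanishes, which is the desired acyclicity.
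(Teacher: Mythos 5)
Your proposal is correct and follows essentially the same route as the paper: both characterise the defining conditions on generators of $C_*^{\Delta,h,l}(G)$ via the interval shape of the $v_l$-preimage (the paper phrases this as the unique maximal pair $a<c$ with $f(a\to c)=l$), both use the degeneracy operator $s_{i,l}$ on the least $v_l$-vertex to build the contracting homotopy, and both handle the borderline case $i_1-i_0=1$ by the cancellation $f\circ\delta_{i_0}=f\circ\delta_{i_0+1}$ forced by the no-multiple-edge hypothesis, with the surviving term then coming from $\partial T(f)$ rather than $T(\partial f)$. The only differences are presentational; in particular you are slightly more explicit than the paper about the interval characterisation and about how the surviving term shifts in the size-two case.
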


\begin{proof}
    Suppose that $f$ is a singular $n$-simplex homomorphism in $C_n^{\Delta,h,l}(G)$. Then there are unique integers $0 \leq a < c \leq n$ maximizing $c-a$ such that $f(a \to c) = l$.
    In this case, for any integer $0 < i< a$ or $c < i\leq n$, $f \circ \delta_{i} \in C_{n-1}^{\Delta,h,l}(G)$.
    
    As $l$ is unique and $f \in C_n^{\Delta,h,l}(G)$, for any $0\leq a < b < c \leq n$ we have $f(b)=f(a)=f(c)$ and $f(a\to b)=f(b \to c) = l$. 
    Hence, when $c-a > 1$ we also have that $f \circ \delta_{i} \in C_{n-1}^{\Delta,h,l}(G)$ for $i=a,\dots,c$.
    If $c-a = 1$, then as there are no multiple edges incident to $v_l$ $f \circ \delta_{a}-f \circ \delta_{c}= 0 \in C_{n-1}^{\Delta,h,l}(G)$.
    Therefore, $C_*^{\Delta,h,l}(G)$ is a sub-chain subcomplex of $C_*^{\Delta,h}(G)$ and it remains to show that $C_*^{\Delta,h,l}(G)$ is acyclic.

    To this end, define $h_n \colon C_n^{\Delta,h,l}(G) \to C_{n+1}^{\Delta,h,l}(G)$ by linearly extending $h_n(f) = (-1)^i s_{i,l}(f)$ where $f \colon \Delta^n \to G$ is a singular $n$-simplex homomorphism in $C_n^{\Delta,h,l}(G)$ and $i$ is the least integer from $0,\dots,n$ such that $f(i) = v_l$. 
    Applying equation~\eqref{eq:LoopExtensionRelations}, for each singular $n$-simplex homomorphism $f \in C_n^{\Delta,h,l}(G)$ we have
    \[
        (\partial_{n+1} \circ h_n)(f)
        = 
        (-1)^i\sum_{j=0}^{n+1}(-1)^j s_{i,l}(f) \circ \delta_j
        =
        (-1)^i \sum_{0\leq j < i} (-1)^j s_{i-1,l}(f \circ \delta_j)
        +
        (-1)^i \sum_{i+1 < j \leq n+1} (-1)^j s_{i,l}(f \circ \delta_{j-1})
    \]
    and
    \begin{align*}
        (h_{n-1} \circ \partial_{n})(f)
        = &
        \sum_{j=0}^n(-1)^j h_{n-1}(f \circ \delta_j)
        \\ = &
        (-1)^i h_{n-1}(f \circ \delta_i)
        +
        (-1)^{i-1} \sum_{0\leq j <i} (-1)^js_{i-1,l}(f\circ \delta_j)
        +
        (-1)^i \sum_{i < j \leq n} (-1)^j s_{i,l}(f\circ \delta_j).
    \end{align*}
    Since $f \in C_*^{\Delta,h,l}(G)$ it is necessarily the case that $f(i \to i+1) = l$ and $f(i+1) = f(i) = v_l$. This implies that $h_{n-1}(f \circ \delta_i) = (-1)^i s_{i,l}(f \circ \delta_i) = (-1)^i f$.
    Therefore, $\partial_{n+1} \circ h_n + h_{n} \circ \partial_{n} = \text{id}_{C_*^{\Delta,h,l}(G)}$.
    Which implies that $C_*^{\Delta,h,l}(G)$ is chain homotopic to the zero complex.
    Since $C_0^{\Delta,h,l}(G) = 0$, any augmented chain complex is also chain homotopic to the zero complex and $C_*^{\Delta,h,l}(G)$ is acyclic, as required.
\end{proof}

The next lemma is included here due to the fact that the lemma following it makes use of a related deformation retraction.

\begin{lemma}\label{lem:ExtenededSimplexStrongContraction}
    For any digraph $G$, if there exists a digraph map $m\colon \Delta^n \to G$ that is surjective on vertices, then $G$ is strongly contractable.
\end{lemma}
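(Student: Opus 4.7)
The plan is to exhibit a one-step strong homotopy from the constant map at $v_0 := m(0)$ to the identity on $G$, from which strong contractibility follows directly by taking the one-vertex quiver $P=(\{\ast\},\varnothing)$ together with the constant map $G \to P$ and the inclusion $P \to G$ sending $\ast$ to $v_0$; these compose to the identity on $P$ and to the constant map at $v_0$ on $G$.

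The key preliminary observation is that every vertex $v$ of $G$ distinct from $v_0$ receives a unique edge from $v_0$. Indeed, since the source vertex $0$ of $\Delta^n$ has an edge $0 \to i$ to every $i > 0$, and $m$ is surjective on vertices, any such $v$ can be written as $v = m(i)$ for some $i > 0$; since $m(0) \neq m(i)$ and $G$ is a digraph (hence has no loops), the edge $0 \to i$ cannot be collapsed to a vertex by the quiver map $m$, so $m(0\to i)$ is an edge $v_0 \to v$ of $G$, and this edge is unique because $G$ is a digraph. Using this, I would define $F\colon G\boxtimes I\to G$ on each family of generators of the strong box product as follows. On vertices set $F(v,0)=v_0$ and $F(v,1)=v$. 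On horizontal edges set $F(e,0) = v_0$, which is forced since both endpoints collapse to $v_0$ and $G$ has no loops, and set $F(e,1) = e$. On a vertical edge $(v, 0\to 1)$, let the image be the unique edge $v_0 \to v$ when $v \neq v_0$, and the vertex $v_0$ otherwise. On a diagonal edge $(e,0\to 1)$, apply the same recipe using $v := t(e)$, which is well defined by the preliminary observation. Compatibility with the source and target functions from Definition~\ref{def:StrongBoxProduct} is then a case-by-case verification, and by construction $F \circ i_0$ is the constant map at $v_0$ while $F \circ i_1 = \text{id}_G$, giving the required one-step strong homotopy.

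The only substantive step is the preliminary observation; once the edges $v_0 \to v$ in $G$ are guaranteed, the construction of $F$ and the verification of the quiver map axioms are essentially forced, with the routine bookkeeping distributed over the three families of edges listed in Definition~\ref{def:StrongBoxProduct}. I do not anticipate any obstacle beyond this bookkeeping, and the remaining edges $i \to j$ of $\Delta^n$ with $i > 0$ play no role in the argument; only the star of outgoing edges from the source vertex $0$ is used.
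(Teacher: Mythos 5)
Your proof is correct and essentially mirrors the paper's argument: the paper collapses $G$ onto the sink vertex $m(n)$, which receives an edge from every other vertex, and realises the strong homotopy with $F\circ i_0=\text{id}_G$ and $F\circ i_1$ the constant retraction, whereas you collapse onto the source vertex $v_0=m(0)$, which emits an edge to every other vertex, with the roles of $i_0$ and $i_1$ swapped. Since $\simeq^S$ is the generated equivalence relation, these choices are interchangeable, and the rest of your verification (the forced collapse of $(e,0)$ to $v_0$ because $G$ is loop-free, and the use of $t(e)$ for diagonal edges) matches the paper's bookkeeping up to this source/sink duality.
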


\begin{proof}
    Let $r \colon G \to G$ be the unique map sending all vertices and edges of $G$ to $m(n)$.
    Since $m\colon \Delta^n \to G$ is a simplex,  $(v,m(n))\in E_G$ for all $v \in V_G \setminus \{m(n)\}$.
    A $1$-step strong homotopy between $\text{id}_G$ and $r$ is given by $F\colon G \boxtimes I \to G$, with
    \begin{align*}
        & F(v,0) = v, \\
        & F(e,0) = e,  \\
        & F(v,1) = F(e,1) = m(n),  \\
        & F(v, 0 \to 1) = 
        \begin{cases}
            m(n) & \text{if} \: v = m(n) \\
            (v, m(n)) & \text{otherwise}
        \end{cases}
        \\
        \text{and} \;
        & F(e, 0\to 1) = 
        \begin{cases}
            m(n) & \text{if} \: s_G(e) = m(n) \\
            (s(e), m(n)) & \text{otherwise}
        \end{cases}
    \end{align*}
    where $v\in V_G$ and $e \in E_G$.
    Therefore, $r$ is a strong deformation retract of $G$ onto vertex $m(n)$ and $G$ is strongly contactable as required.
\end{proof} 

The next lemma determines an initial special case when $C_*^{\Delta,h}(G)$ acyclic under certain assumptions.

\begin{lemma}\label{lem:MaxLoopSimplexAcyclic}
    Let $G$ be a quiver and $h\colon \Delta^n \to G$ a homomorphism surjective on vertices such that $h(n)$ has a loop.
    Then $C_*^{\Delta,h}(G)$ is acyclic.
\end{lemma}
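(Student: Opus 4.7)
\; The plan is to construct a cone-style chain contraction on $C_*^{\Delta,h}(G)$ with apex $h(n)$, showing that the augmented chain complex is chain-homotopic to zero and hence acyclic.

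First, I would use the hypotheses on $h$ to produce, for every $v \in V_G$, a chosen edge $\epsilon_v \in E_G$ with source $v$ and target $h(n)$. For $v = h(n)$ set $\epsilon_v = l$. Otherwise, surjectivity of $h$ on vertices yields some $i < n$ with $h(i) = v$, and since $h$ is a homomorphism the edge $i \to n$ of $\Delta^n$ is sent to an edge $h(i \to n) \in E_G$ with source $v$ and target $h(n)$; take $\epsilon_v$ to be this edge. The availability of these edges is the only place where the two hypotheses on $h$ and on the loop $l$ are used.

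Next I would define a cone operator $c_k \colon C_k^{\Delta,h}(G) \to C_{k+1}^{\Delta,h}(G)$ for $k \geq 0$ by linearly extending the rule that sends a singular $k$-simplex homomorphism $f \colon \Delta^k \to G$ to the quiver homomorphism $c_k(f) \colon \Delta^{k+1} \to G$ determined by $c_k(f)(j) = f(j)$ for $0 \leq j \leq k$, $c_k(f)(k+1) = h(n)$, $c_k(f)(j \to l) = f(j \to l)$ for $0 \leq j < l \leq k$, and $c_k(f)(j \to k+1) = \epsilon_{f(j)}$ for $0 \leq j \leq k$. The choice of $\epsilon$-edges guarantees this is a genuine quiver homomorphism. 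The core computation then consists of the two face relations
\[
c_k(f) \circ \delta_i = c_{k-1}(f \circ \delta_i) \text{ for } 0 \leq i \leq k, \qquad c_k(f) \circ \delta_{k+1} = f,
\]
from which the standard cone identity $\partial_{k+1} c_k(f) = c_{k-1}(\partial_k f) + (-1)^{k+1} f$ follows for $k \geq 1$, with the base case $\partial_1 c_0(f) = [h(n)] - f$ at the bottom, where $[h(n)]$ denotes the singular $0$-simplex at $h(n)$.

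To finish, I would augment $C_*^{\Delta,h}(G)$ by $\varepsilon(v) = 1_R$ on singular $0$-simplices, extend the cone to $c_{-1} \colon R \to C_0^{\Delta,h}(G)$ by $c_{-1}(1_R) = [h(n)]$, and set $H_k = (-1)^{k+1} c_k$ for $k \geq -1$. A short sign-checking computation then gives $\partial H + H \partial = \mathrm{id}$ on the augmented complex, so the identity is null-homotopic and the reduced homology vanishes, proving $C_*^{\Delta,h}(G)$ is acyclic. The only real obstacle is verifying the face-map identity $c_k(f) \circ \delta_i = c_{k-1}(f \circ \delta_i)$, which reduces to observing that the chosen edge $\epsilon_{f(j)}$ depends only on the vertex $f(j) \in V_G$ and not on the position $j$ in the simplex, so dropping any of the first $k+1$ vertices preserves the cone structure on the nose.
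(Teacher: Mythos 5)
Your cone-contraction argument is correct, but it takes a genuinely different and more self-contained route than the paper. The paper's proof is homotopy-theoretic: it modifies the strong deformation retraction from Lemma~\ref{lem:ExtenededSimplexStrongContraction} (replacing the image vertex by the loop $l$ at $h(n)$ so that every edge is sent to an edge) to obtain a strong $h$-deformation retraction of $G$ onto the subquiver $G'$ consisting of $h(n)$ together with $l$, notes that $C_*^{\Delta,h}(G')$ is acyclic, and then invokes the strong $h$-homotopy invariance of $H_*^{\Delta,h}$ from Theorem~\ref{thm:PreviousHomotopyInv}(2). Your argument instead builds the chain contraction $c_k$ directly on $C_*^{\Delta,h}(G)$: the same underlying data (an edge $\epsilon_v$ from each $v$ to $h(n)$ supplied by surjectivity, and the loop $l$ when $v=h(n)$) is used to cone every singular simplex off the vertex $h(n)$, and the face identities together with the standard sign bookkeeping give the null-homotopy of the identity on the augmented complex. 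What the paper's proof buys is brevity and reuse of machinery it has already established; what your proof buys is that it is purely chain-level, does not depend on \cite[Theorem 5.17]{Li2024} or on Lemma~\ref{lem:ExtenededSimplexStrongContraction}, and makes explicit the chain homotopy. One cosmetic point: where you write ``surjectivity of $h$ on vertices yields some $i<n$ with $h(i)=v$,'' you should say explicitly that this applies to $v\neq h(n)$ (which your separate handling of $v=h(n)$ already covers); and it is worth noting that $c_k(f)$ may be degenerate, which is harmless since $C_*^{\Delta,h}$ is freely generated on all singular simplex homomorphisms.
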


\begin{proof}
    With the modification that every edge that would have been sent to the vertex $m(n)$ being instead sent to the loop at $h(n)$, the retraction $r$ and strong homotopy given in the proof of Lemma~\ref{lem:ExtenededSimplexStrongContraction} provide a strong $h$-deformation retraction to $G$ subquiver $G'$ consisting of vertex $h(n)$ and a loop at $h(n)$.
    Since $C_*^{\Delta,h}(G')$ is acyclic, the strong homotopy invariance of $H_*^{\Delta,h}$ from Theorem~\ref{thm:PreviousHomotopyInv} now implies that $C_*^{\Delta,h}(G)$ acyclic.
\end{proof}

We can now give a variation of the previous lemma under different assumptions.

\begin{lemma}\label{lem:ExtenededSimplexStrongLocalHContraction}
    Let $G$ be a quiver with no multiple edges, such that any double edge has a loop, there exists a digraph homomorphism $h \colon \Delta^n \to G$ surjective on vertices, and for every edge of $e \in E_G$ there exists some digraph homomorphism $h_e\colon \Delta^n \to G$ surjective on vertices of $G$ with $e$ lying in its image.
    Then $C_*^{\Delta,h}(G)$ is acyclic.
\end{lemma}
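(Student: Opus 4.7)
The plan is to split into two cases depending on whether $G$ contains any loops, handling the loop-free case by a direct identification with $\Delta^n$ and the loop case by a reduction to Lemma~\ref{lem:MaxLoopSimplexAcyclic}.

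First, suppose $G$ has no loops. The hypothesis that every double edge has a loop then forces $G$ to be free of double edges, and combined with the no-multiple-edges assumption, $G$ is a digraph without double edges. The vertex-surjective homomorphism $h\colon\Delta^n\to G$ must then be injective on vertices, since any collision $h(i)=h(j)$ with $i<j$ would force $h(i\to j)$ to be a loop in the loop-free $G$. Hence $|V_G|=n+1$, and for each $i<j$ the edge $h(i\to j)$ runs from $h(i)$ to $h(j)$; the absence of double and multiple edges precludes any further edges, making $h$ an isomorphism $\Delta^n\xrightarrow{\cong} G$. By Proposition~\ref{prop:SimplicalNoLoop} together with the identification in equation~\eqref{eq:FlagComplexInclusionHomology}, $C_*^{\Delta,h}(G)$ coincides with the chain complex of the directed flag complex $\mathcal{F}(\Delta^n)$, which realises the standard simplicial $n$-simplex and hence has vanishing reduced homology.

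Next, suppose $G$ has a loop $l$ at some vertex $v$. The plan is to construct a new vertex-surjective homomorphism $h'\colon\Delta^{n'}\to G$ with $h'(n')=v$, after which Lemma~\ref{lem:MaxLoopSimplexAcyclic} immediately delivers acyclicity. I would start from the edge-covering map $h_l$, which is vertex-surjective with $h_l(i)=h_l(i')=v$ and $h_l(i\to i')=l$ for some $i<i'$, and extend it by appending further copies of $v$ after position $n$. The loop $l$ handles the self-edges among the appended vertices, while the edge-covering maps $h_e$ for edges incident to $v$ are used to supply the new incoming edges to $v$ required by the enlarged domain $\Delta^{n'}$.

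The main obstacle lies in the loop case. A naive extension of $h_l$ by copies of $v$ can fail because some vertex $h_l(k)$ appearing only after the last occurrence of $v$ in $h_l$ may lack a direct edge back to $v$; indeed such an edge would form a double edge with $h_l(i'\to k)$, which the no-double-edges-without-loops hypothesis allows precisely because $v$ carries the loop $l$. Overcoming this will likely require an inductive interleaving of $h_l$'s vertices with additional copies of $v$, carefully invoking the edge-covering hypothesis on edges targeting $v$ to assemble the required intermediate edges while respecting the no-multiple-edges constraint.
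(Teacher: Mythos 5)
Your Case 1 (no loops in $G$) is correct and self-contained: the vertex-surjectivity forces injectivity, the no-double/no-multiple hypotheses force $h$ to be an isomorphism $\Delta^n\cong G$, and then Proposition~\ref{prop:SimplicalNoLoop} and equation~\eqref{eq:FlagComplexInclusionHomology} identify $C_*^{\Delta,h}(G)$ with the chain complex of the standard $n$-simplex. This is a valid argument, and in fact cleaner than the paper's treatment in that narrow sub-case (the paper never isolates the loop-free case, instead uniformly casing on whether $h(n)$ carries a loop).

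Case 2, however, is an outline with an acknowledged and unresolved hole, and the hole is real. You want to produce a vertex-surjective homomorphism $h'\colon\Delta^{n'}\to G$ with $h'(n')$ the loop vertex $v$, but your own description of the obstruction is exactly what blocks this: for a vertex $u=h_l(j)$ appearing strictly after the last occurrence of $v$ in $h_l$, you would need an edge $u\to v$, and the hypothesis ``every double edge has a loop'' only \emph{permits} such an edge, it never \emph{forces} one to exist. ``Inductive interleaving'' and ``invoking the edge-covering hypothesis'' are not arguments, and I do not see how they could become one --- there is no reason a priori that $G$ contains any directed closed walk through $v$ visiting every vertex, which is what building $h'$ amounts to. The paper's proof sidesteps precisely this by never trying to relocate the loop vertex to the end; instead it cases on whether $h(n)$ itself has a loop, and in the sub-case where $h(n)$ is loop-free with no incident double edge, it \emph{adds} an artificial loop at $h(n)$, shows via $C_*^{\Delta,h,ld}(G')=0$ and Lemma~\ref{lem:FirstLoopCase} that this does not change the homology, and then applies Lemma~\ref{lem:MaxLoopSimplexAcyclic}. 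In the remaining sub-case (a double edge $d$ at $h(n)$) it uses $h_d$ and deduces from the double-edge-implies-loop hypothesis that $h_d(n)$ must carry a loop, so Lemma~\ref{lem:MaxLoopSimplexAcyclic} applies to $h_d$ directly. That argument is uniform and does not rely on finding a globally vertex-surjective simplex terminating at a prescribed vertex; you would need to adopt something like it to close the gap.
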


\begin{proof}
    When $h(n)$ has a loop, $G$ is acyclic by Lemma~\ref{lem:MaxLoopSimplexAcyclic}.
    If $h(n)$ has no loop and is not incident to a double edge in $G$, then let $G'$ be the quiver obtained from $G$ by adding an additional loop $l$ at $h(n)$.
    In this case, as $h$ is surjective on vertices and there are no double edges indecent to $h(n)$, all edges incident to $h(n)$ in $G'$ have target vertex $h(n)$.
    This implies that for any integer $m \geq 0$ and singular $m$-simplex homomorphism $f \colon \Delta^m \to G'$, $f(i) \neq h(n)$ for each $i=0,\dots,m-1$. Hence, $C_*^{\Delta,h,ld}(G') = 0$ which implies $C_*^{\Delta,h}(G')= C_*^{\Delta,h}(G) + C_*^{\Delta,h,l}(G')$ as a chain complexes. Since $G$, hence, $G'$ contains no multiple edges, Lemma~\ref{lem:FirstLoopCase} implies that $C_*^{\Delta,h,l}(G')$ is an acyclic summand of $C_*^{\Delta,h}(G')$. Therefore, $H_*^{\Delta,h}(G) = H_*^{\Delta,h}(G')$ and $C_*^{\Delta,h}(G)$ is acyclic as $C_*^{\Delta,h}(G')$ is acyclic by Lemma~\ref{lem:MaxLoopSimplexAcyclic}.

    The only remaining possibility is that $h(n)$ has no loop and is incident to a double edge $d$ such that $s(d) = h(n)$ and $t(d) \neq h(n)$. Applying the conditions in the statement of the lemma, there exists a $h_d \colon \Delta^n \to G$ surjective on vertices of $G$ with $d$ in its image. 
    If $h_d(n) = h(n)$ then as $d$ lies in the image of $h_d$, there is an $i=0,\dots,n-1$ such that $h_d(i) = s(d) = h(n)$. This implies that $h_d(i \to n)$ is a loop at $h(n)$, which contradicts the present assumption that $h(n)$ has no loop. 
    Therefore, $h_d(n) \neq h(n)$. By construction of singular simplex homomorphisms $h$ and $h_d$ there must be an edge from every element of $V_G \setminus \{h(n)\}$ to $h(n)$ and from every from every element of $V_G \setminus \{h_d(n)\}$ to $h_d(n)$.
    This implies there is a double edge in $G$ between $h_d(n)$ and $h(n)$. Using the conditions of the lemma, since $h(n)$ does not have loop $h_d(n)$ must have a loop. Hence, $C_*^{\Delta,h}(G)$ is acyclic by Lemma~\ref{lem:MaxLoopSimplexAcyclic} with the singular simplex homomorphism $h_d$.
\end{proof}

\section{Computation of \texorpdfstring{$H^{\Delta,m}_*$}{singular quiver homology}}\label{sec:ComputingMappingHomology}

In this section, we provide an object-wise map from a quiver to a suitably simpler simplicial complex called the reduced directed flag complex, whose simplicial homology coincides with $H^{\Delta,m}_*$. In particular, this construction extends to a functor natural with recept to the isomorphism between homologies. The naturality of our result implies that persistent $H^{\Delta,m}_*$ can be computed more efficiently using the persistent simplicial homology of the reduced directed flag complex.

Throughout the section assume that $G$ is a quiver and $n\geq 0$ is an integer, unless otherwise stated. Note also that some results in the section require the axiom of choice if the quiver $G$ contains an infinite number of edges.

\subsection{The reduced digraph of a quiver}\label{sec:ReducedDigraph}

In this subsection, we show that the presence of loops and multiple edges does not effect the structure of $H^{\Delta,m}_*(G)$. More precisely, we show that the removal of loops and multiple edges realises a strong homotopy equivalence between any quiver and a digraph. The strong homotopy invariance of $H_*^{\Delta,m}$ shown in \cite{Li2024}, then provides us with our desired result.

\begin{definition}
    Let $G$ be a quiver. Define the \emph{reduced digraph} $\bar{\mathcal{R}}(G)$ of the quiver $G$ to have vertices $V_{\bar{\mathcal{R}}(G)}=V_G$ and edges
    \[
        E_{\bar{\mathcal{R}}(G)} =
        \{ \bar{\mathcal{R}}(G,u,v)
        \: | \:
        \exists \:
        e \in E_G \:
        \text{with} \:
        s(e) = u, \:
        t(e) =v \:
        \text{and} \:
        u \neq v
        \}
    \]
    where
    \[
        s(\bar{\mathcal{R}}(G,u,v)) = u
        \;\;\; \text{and} \;\;\;
        t(\bar{\mathcal{R}}(G,u,v)) = v.
    \]
\end{definition}
The reduced digraph construction extends to a functor $\bar{\mathcal{R}}\colon \textbf{Quiv}_m \to \textbf{Quiv}_m$ by setting
\begin{equation}\label{eq:ReducedDigraphFunctor}
        \bar{\mathcal{R}}(m)(v) = m(v)
        \;\;\; \text{and} \;\;\;
        \bar{\mathcal{R}}(m)(\bar{\mathcal{R}}(G,s(e),t(e))) = (\bar{\mathcal{R}}(G',s(m(e)),t(m(e))))
\end{equation}
for quiver map $m \colon G \to G'$, $v\in V_G$ and $e\in E_G$.

A weaker version of the following proposition for quiver homotopies with respect to the box product rather than strong homotopies can be found in course notes of Muranov \cite[Lecture 7]{Muranov2024}, and is expected to appear in a textbook on the subject of digraph homotopy and homology.  

\begin{proposition}\label{prop:StrongMultiEdgeContraction}
    Quivers $G$ and $\bar{\mathcal{R}}(G)$ are strong homotopy equivalent.
\end{proposition}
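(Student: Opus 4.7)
The plan is to exhibit explicit quiver maps realising the equivalence together with a one-step strong homotopy witnessing it. Define a canonical projection $p \colon G \to \bar{\mathcal{R}}(G)$ as the identity on vertices, sending each non-loop edge $e \in E_G$ to $\bar{\mathcal{R}}(G, s(e), t(e))$ and sending each loop $l$ at a vertex $v$ to the vertex $v$ itself. This is a genuine quiver map (but not in general a homomorphism), exploiting the edge-to-vertex clause in the definition of quiver maps. Using the axiom of choice, select for each edge $\bar{\mathcal{R}}(G, u, v) \in E_{\bar{\mathcal{R}}(G)}$ a representative $e_{u,v} \in E_G$ with $s(e_{u,v}) = u$ and $t(e_{u,v}) = v$, and let $i \colon \bar{\mathcal{R}}(G) \to G$ be the quiver homomorphism given by the identity on vertices and $\bar{\mathcal{R}}(G, u, v) \mapsto e_{u,v}$ on edges. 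Then $p \circ i = \mathrm{id}_{\bar{\mathcal{R}}(G)}$ is immediate from the construction, since each reduced edge is uniquely determined by its endpoints.

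The heart of the argument is to construct a $1$-step strong homotopy $F \colon G \boxtimes I \to G$ between $\mathrm{id}_G$ and $i \circ p$; that is, $F \circ i_0 = i \circ p$ and $F \circ i_1 = \mathrm{id}_G$. I would define $F$ to be constant in the $I$-coordinate on vertices, $F(v,0) = F(v,1) = v$, and on edges by the rules $F(e, 0) = i(p(e))$, $F(e, 1) = e$, $F(v, 0\to 1) = v$, and $F(e, 0\to 1) = e$ for all $v \in V_G$ and $e \in E_G$. Once $F$ is verified to be a quiver map, the relations $F \circ i_0 = i \circ p$ and $F \circ i_1 = \mathrm{id}_G$ follow directly from the definitions of $i_0$ and $i_1$, yielding $\mathrm{id}_G \simeq_1^S i \circ p$ and hence strong homotopy equivalence of $G$ and $\bar{\mathcal{R}}(G)$.

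The main obstacle, and essentially the only real content, is a case analysis to confirm that $F$ is a bona fide quiver map. The nontrivial cases are: an edge $(l,0)$ with $l$ a loop at $v$, where $F(l,0) = v = F(v, 0) = F(v, 0) $ so $(l,0)$ falls into the edge-to-vertex clause since both its endpoints already map to $v$; and each edge $(v, 0\to 1)$, which similarly collapses to the vertex $v$ under $F$ and again satisfies the edge-to-vertex clause. All remaining edges, namely $(e,1)$, non-loop $(e,0)$, and $(e, 0\to 1)$, map to genuine edges of $G$ with matching source and target by construction, and multiple edges pose no difficulty because $F$ is defined edgewise so distinct edges sharing a common source/target pair are simply sent to the same chosen representative by $F(\cdot, 0)$. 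These checks conclude the proof.
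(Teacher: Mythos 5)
Your proposal is correct and takes essentially the same approach as the paper: the maps you call $p$ and $i$ are exactly the paper's $f$ and $g$, the identity $p\circ i = \mathrm{id}_{\bar{\mathcal{R}}(G)}$ is verified the same way, and your one-step strong homotopy $F$ (with $F(e,0)=i(p(e))$, $F(e,1)=F(e,0\to 1)=e$, $F(v,0\to 1)=v$) coincides with the paper's, including the case analysis showing loops and the $(v,0\to 1)$ edges collapse correctly under the edge-to-vertex clause.
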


\begin{proof}
    Define $f\colon G \to \bar{\mathcal{R}}(G)$ by
    \[
        f(v) = v
        \;\;\; \text{and} \;\;\;
        f(e) = 
        \begin{cases}
            s(e) & \text{if} \; s(e) = t(e)
            \\
            \bar{\mathcal{R}}(G,s(e),t(e)) & \text{otherwise} 
        \end{cases}
    \]
    for any $v \in V_G$ and $e \in E_G$.
    Given distinct $u,v\in E_G$ such that there is an $e\in E_G$ with $s(e)=u$ and $t(e)=v$, choose some edge
    \[
        e_{u,v}\in \{ e \in E_G \: | \: s(e) = u \: \text{and} \: t(e) = v \}.
    \]
    Define $g\colon \bar{\mathcal{R}}(G) \to G$ by
    \[
        g(v) = v
        \;\;\; \text{and} \;\;\;
        g(\bar{\mathcal{R}}(G,u,v)) = e_{u,v}.
    \]
    By construction of $\bar{\mathcal{R}}(G)$, both $f$ and $g$ are well defined and $f \circ g = \text{id}_{\bar{\mathcal{R}}(G)}$.
    Moreover, $g\circ f \simeq_1^S \text{id}_G$ as we have well defined $F\colon G \boxtimes I \to G$ given on edges by
    \begin{align*}
        &F((v,0)) = F((v,1)) = F((v, 0\to 1)) = v, \\
        &F((e,0)) = 
        \begin{cases}
            s(e) & \text{if} \; s(e) = t(e)
            \\
             e_{u,v} & \text{otherwise,} 
        \end{cases}
        \\
        \text{and} \;
        &F((e, 1)) = F((e, 0\to 1)) = e
    \end{align*}
    for all $v\in V_G$ and $e\in E_G$.
\end{proof}

In fact we have proved that $\bar{\mathcal{R}}(G)$ is a strong deformation retraction of $G$.
Note however, that the proof of the lemma above does not hold in the category of quivers and quiver homomorphism $\textbf{Quiv}_h$, as we are always required to send at least one edge of the form $(v, 0\to 1)\in E_{G \boxtimes I}$ for $v\in V_G$ to a vertex.

\begin{remark}\label{rmk:NonStrongRedcuedquiverCase}
    As stated prior to Proposition~\ref{prop:StrongMultiEdgeContraction}, the cartesian or box product $G \square G'$ of quivers $G$ and $G'$ can be obtained similarly to the strong box product in Definition~\ref{def:StrongBoxProduct} by restricting only to edges $E_{G \square G'} = E_G \times V_{G'} \cup V_G \times E_{G'}$. In precisely the same way as for strong homotopies in Definition~\ref{def:StrongHomtopoic}, we obtain a notion of homotopy of digraphs and quivers using $\square$ in place of $\boxtimes$. In particular, Proposition~\ref{prop:StrongMultiEdgeContraction} is equally valid for quivers homotopies, as less conditions are required to be checked in the poof. The path homology of a digraph is known to be a homotopy invariant of digraphs \cite{Grigoryan2014}. Therefore, any extension of path homology that is a homotopy invariant of quivers when applied to a quiver $G$ would coincide with the path homology of the reduced digraph $\bar{\mathcal{R}}(G)$, which is a digraph by construction. 
\end{remark}

Proposition~\ref{prop:StrongMultiEdgeContraction}, has the following immediate consequence. 

\begin{corollary}\label{cor:SimplexMapReducedOnly}
    Let $G$ be a quiver, then
    \[
        H_*^{\Delta,m}(G) = H_*^{\Delta,m}(\bar{\mathcal{R}}(G))
    \]
    naturally with respect to induced maps.
\end{corollary}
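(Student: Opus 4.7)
The isomorphism will follow immediately by combining Proposition~\ref{prop:StrongMultiEdgeContraction} with the homotopy invariance in Theorem~\ref{thm:PreviousHomotopyInv}(1). Naturality will follow from the separate observation that the reducing quiver maps constructed in the proof of Proposition~\ref{prop:StrongMultiEdgeContraction} fit into a strictly commutative square with any quiver map $m$ and its functorial image $\bar{\mathcal{R}}(m)$, so naturality holds already before passing to homology.

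The first step is to take the explicit quiver maps $f\colon G\to \bar{\mathcal{R}}(G)$ and $g\colon \bar{\mathcal{R}}(G)\to G$ built in the proof of Proposition~\ref{prop:StrongMultiEdgeContraction}, together with the identities $f\circ g = \text{id}_{\bar{\mathcal{R}}(G)}$ and $g\circ f \simeq^S_1 \text{id}_G$. Proposition~\ref{prop:InitialFuctoriality}(1) yields induced homomorphisms $f_*$, $g_*$ on $H^{\Delta,m}_*$, and Theorem~\ref{thm:PreviousHomotopyInv}(1) upgrades the one-step strong homotopy to the equality $g_*\circ f_* = \text{id}$. Combined with $f_*\circ g_* = \text{id}$, this makes $f_*$ a natural isomorphism $H^{\Delta,m}_*(G)\xrightarrow{\cong} H^{\Delta,m}_*(\bar{\mathcal{R}}(G))$, giving the stated equality.

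For naturality, given a quiver map $m\colon G\to G'$ I would verify that the square with horizontal maps $m$, $\bar{\mathcal{R}}(m)$ and vertical maps $f$, $f'$ already commutes in $\textbf{Quiv}_m$, after which applying $H^{\Delta,m}_*$ gives the required compatibility with $m_*$ and $\bar{\mathcal{R}}(m)_*$. On vertices both compositions act as the underlying vertex map of $m$. On an edge $e\in E_G$ one splits into cases according to whether $s(e)=t(e)$ in $G$ and whether $m(e)$ is a vertex, a loop, or a non-loop edge in $G'$; in each case both $f'\circ m$ and $\bar{\mathcal{R}}(m)\circ f$ send $e$ either to the vertex $m(s(e))$ (whenever a collapse occurs) or to the unique reduced edge $\bar{\mathcal{R}}(G', s(m(e)), t(m(e)))$ otherwise.

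The only real subtlety, and hence the main obstacle, is pinning down the functor $\bar{\mathcal{R}}$ on morphisms in the cases that equation~\eqref{eq:ReducedDigraphFunctor} does not literally address, namely when $m$ collapses a non-loop edge $e$ to a vertex or to a loop. In each such case $m(s(e))=m(t(e))$ by the quiver-map axiom, so the natural extension is to send the reduced edge $\bar{\mathcal{R}}(G, s(e), t(e))$ to the vertex $m(s(e))$. With this convention the case analysis above is routine, and the corollary reduces to the two earlier results already cited.
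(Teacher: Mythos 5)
Your proof is correct and follows the same overall route as the paper: the isomorphism comes from Proposition~\ref{prop:StrongMultiEdgeContraction} and the strong homotopy invariance in Theorem~\ref{thm:PreviousHomotopyInv}(1), and naturality comes from checking compatibility with the functorial construction of $\bar{\mathcal{R}}$ from equation~\eqref{eq:ReducedDigraphFunctor}. The paper's proof is terse (essentially two sentences of citation), whereas you fill in the details. In particular, you correctly flag a real but minor gap in equation~\eqref{eq:ReducedDigraphFunctor}: as written, the formula $\bar{\mathcal{R}}(m)(\bar{\mathcal{R}}(G,s(e),t(e))) = \bar{\mathcal{R}}(G',s(m(e)),t(m(e)))$ does not literally apply when $m(e)$ is a vertex or a loop, since $\bar{\mathcal{R}}(G')$ has no loops. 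Your convention --- send the reduced edge to the vertex $m(s(e)) = m(t(e))$ in that situation --- is exactly what is needed, and with it your case-by-case verification that $f'\circ m = \bar{\mathcal{R}}(m)\circ f$ strictly in $\textbf{Quiv}_m$ (before passing to homology) is complete and correct. One cosmetic point: in the paper's proof the identity $f\circ g = \text{id}_{\bar{\mathcal{R}}(G)}$ already gives $f_*\circ g_* = \text{id}$ without invoking homotopy invariance; you handle this the same way, so no issue, but it is worth being explicit that only the $g\circ f$ direction needs Theorem~\ref{thm:PreviousHomotopyInv}.
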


\begin{proof}
    It is shown in \cite[Theorem 5.6]{Li2024} that $H_*^{\Delta,m}$ is a strong homotopy invariant. Hence the isomorphism of homology follows from Proposition~\ref{prop:StrongMultiEdgeContraction}. The naturality of this isomorphism can be obtained directly from the functorial construction of $\bar{\mathcal{R}}$ in equation~\eqref{eq:ReducedDigraphFunctor} and induced maps of $H_*^{\Delta,m}$ given in equation~\eqref{eq:InducedMapHomology}.
\end{proof}

\subsection{The reduced directed flag complex}\label{sec:RducedDirectedFlag}

Motivated by computation, we now extend the construction of the reduced digraph functor $\bar{\mathcal{R}}$ from the previous subsection and construct for any quiver a simplicial complex with $H_*^{\Delta,m}$ isomorphic to its simplicial homology. This construction is functorial, providing a functor we call the reduced direct flag complex $\bar{\mathcal{F}}\colon \textbf{Quiv}_m \to \textbf{ASim}$. Furthermore, $\bar{\mathcal{F}}$ is natural with respect to the isomorphisms between homologies.

\begin{definition}\label{def:ReducedDirectedFlagComplex}
    Let $G$ be a quiver. Then the \emph{reduced directed flag complex}, $\bar{\mathcal{F}}(G)$ is the abstract simplicial complex on vertex set $V_G$, with an $n$-simplex on the set of vertices $v_0,\dots,v_n \in V_G$ if and only if these is an inclusion $f \colon \Delta^n \to G$ such that $v_0,\dots,v_n$ lie in the image of $f$.
\end{definition}

We note that for digraphs the reduced directed flag complex also appears in \cite{milicevic2024} where it is called the directed Vietoris-Rips complex. In this case, the reduced directed flag complex is considered in a different theoretical context related to the homology of digraphs as pseudotopological spaces. However, analogously to the direction of this section, the directed Vietoris-Rips complex was shown to have homology coinciding with the homology of a digraph as a certain pseudotopological spaces.

The following example demonstrates the difference between the two notions of flag complex that have been introduced so far in this work.

\begin{example}\label{ex:FlagComplexDifferences}
    The reduced directed flag complex $\bar{\mathcal{F}}(G)$ is not the same as the directed flag complex $\mathcal{F}(G)$ (Definition~\ref{def:DirectedFlag}) whenever multiple or double edges are present. For example, if $G$ is a quiver that contains two vertices and precisely one double edge or multiple edge then $\text{F(x)}(G)$ is a circle while $\bar{\mathcal{F}}(G)$ is an interval.
    
    Moreover, neither $\bar{\mathcal{F}}(G)$ or $\mathcal{F}(G)$ in general coincide with the classical flag complex on the undirected graph containing $G$.
    For example, the directed cycle digraph $C$ with $V_C=\{u,v,w\}$ and edges
    \begin{center}
        \tikz {
            \node (u) at (0,0) {$u$};
            \node (v) at (2.5,0) {$v$};
            \node (w) at (1.25,1.5) {$w$};
            \draw[->] (u) -- (v);
            \draw[->] (v) -- (w);
            \draw[->] (w) -- (u);
            }
    \end{center}
    provides $\bar{\mathcal{F}}(C)$ and $\mathcal{F}(C)$ that contain no $2$-simplices, while the classical flag complex contains a $2$-simplex.
\end{example}

Given a map of quivers $\phi \colon G_1\to G_2$ we obtain an induced map $\bar{\mathcal{F}}(\phi)\colon \bar{\mathcal{F}}(G_1) \to \bar{\mathcal{F}}(G_2)$ given by
\begin{equation}\label{eq:FlagInducedMap}
    \bar{\mathcal{F}}(\phi)(\{v_0,\dots,v_n\})
    =
    \{\phi(v_0),\dots,\phi(v_n)\}
\end{equation}
for each simplex $\{v_0,\dots,v_n\}$ in $\bar{\mathcal{F}}(G_1)$.
The induced maps make
$\bar{\mathcal{F}}\colon \textbf{Quiv}_m \to \textbf{ASim}$ a functor, as for any quiver maps $\phi \colon G_1 \to G_2$ and $\varphi \colon G_2 \to G_3$ we have
\begin{equation}\label{eq:FlagFunctoriality}
    \bar{\mathcal{F}}(\varphi \circ \phi)(\{v_0,\dots,v_n\})
    =
    \{\varphi\circ\phi(v_0),\dots,\varphi \circ \phi(v_n)\}
    =
    (\bar{\mathcal{F}}(\varphi) \circ \bar{\mathcal{F}}(\phi))(\{v_0,\dots,v_n\})
\end{equation}
for each simplex $\{v_0,\dots,v_n\}$ in $\bar{\mathcal{F}}(G_1)$.

Every map of abstract simplicial complexes is by construction provided by a function on its vertices, while a map of quivers is determined also by a function on its edges. This implies that $\bar{\mathcal{F}}$ cannot be a faithful functor. However, $\bar{\mathcal{F}}$ is faithful when restricted to digraphs, in which case there is a unique arrow in either direction between any pair of vertices. Furthermore, $\bar{\mathcal{F}}$ is not a full functor even when restricted to digraphs. Consider for example the interval digraph $I$ with a single arrow between two vertices. Then a digraph map switching the order of the vertices does not exist. However, a simplicial map switching the vertices of $\bar{\mathcal{F}}(I)$ is well defined.

The next example demonstrates that $\bar{\mathcal{F}}$ is not surjective on objects, though $\bar{\mathcal{F}}$ can realise any simplicial complex up to homotopy type. 

\begin{example}
    Consider the abstract simplicial complex $(V,S)$ on vertex set $V=\{v_0,v_1,v_2,v_3\}$ with
    \[
        S = \{ \{v_0,v_1\} \{v_0,v_2\}, \{v_0,v_3\} \{v_1,v_2\}, \{v_1,v_3\}, \{v_2,v_3\} \}
    \]
    That is the $1$-skeleton of a $2$-simplex. We now attempt to construct a quiver $G$ such that $\bar{\mathcal{F}}$ is $(V,S)$.
    
    Since $(V,S)$ contains no $2$ simplices, all triples of vertices contained in $V_G=V$ must be directed cycles (see the second part of Example~\ref{ex:FlagComplexDifferences}), otherwise there is an inclusion of $\Delta^2$ into $G$. However, once any two triples of vertices in $V_G$ are assigned arrows in the form of directed cycles the remaining two tipples are forced to be non-cyclic.
    Moreover, any additional edges added to $G$ only increases the number of possible inclusions $\colon \Delta^2 \to G$. Therefore, there does not exist a quiver $G$ such that $\bar{\mathcal{F}}(G) = (V,S)$.

    However, given an arbitrary abstract simplicial complex $(V,S)$, we can construct a quiver $G$ such that $\bar{\mathcal{F}}(G)$ and $(V,S)$ have the same homotopy type as follows.
    Using a simplicial construction similar to the cubical one from \cite{Grigoryan2014}, we will consider the Barycentric subdivision $B$ of $(V,S)$ and form a digraph $G_B$ such that $\bar{\mathcal{F}}(G_B) = B$.
    Recall that the Barycentric subdivision $B$ of $(V,S)$ is the flag complex of the graphs whose vertices are $S$ and with an edge between $s_1,s_2 \in S$ if and only if $s_1 \subset s_2$ or $s_2 \subset s_1$.
    In particular, $B$ has the same homotopy type as $(V,S)$.
    
    The digraph $G_B$ is obtained by setting $V_{G_B}$ as the vertices of $B$ and $E_{G_B}$ to be in bijection with the edges of $B$ assigning direction in each case from the smaller element of $S$ to the larger element of $S$. In this case, inclusions $f \colon \Delta^n \to G_B$ are in one to one correspondence with $n$-simplices of $B$. Where the bijection is given by assigning to $f \colon \Delta^n \to G_B$ the simplex $i(n)$. Therefore, $\bar{\mathcal{F}}(G_B) = B$ by definition.
\end{example}

The following theorem and subsequent corollary demonstrate that the computation of $H_*^{\Delta,m}(G)$ can be reduced to computing the homology of the abstract simplicial complex $\bar{\mathcal{F}}(\bar{\mathcal{R}}(G))$. In particular, for a finite quiver the size of a basis of $C_*(\bar{\mathcal{F}}(\bar{\mathcal{R}}(G)))$ is in general far smaller than for $C_*^{\Delta,m}(G)$, leading to a considerably more efficient algorithm in Appendix~\ref{sec:AlgReducedDirectedFlagComplex}.

\begin{theorem}\label{thm:SimplicalGraphHomologies}
    Let $G$ be a quiver, then
    \[
        H_*^{\Delta,m}(G) = H_*(\bar{\mathcal{F}}(G)).
    \]
    Moreover, the above isomorphism on homology is natural with respect to $\bar{\mathcal{F}}$.
\end{theorem}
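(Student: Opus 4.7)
The plan is to reduce to a digraph and then apply the acyclic carrier theorem. Applying Corollary~\ref{cor:SimplexMapReducedOnly}, $H^{\Delta,m}_*(G) \cong H^{\Delta,m}_*(\bar{\mathcal{R}}(G))$ naturally, and directly from Definition~\ref{def:ReducedDirectedFlagComplex} one sees $\bar{\mathcal{F}}(G) = \bar{\mathcal{F}}(\bar{\mathcal{R}}(G))$, since only the existence of directed edges between distinct vertices affects which vertex sets span a simplex. I therefore assume $G = \bar{\mathcal{R}}(G)$, a digraph with no loops and no multiple edges (but possibly with double edges).

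Fixing a total order on $V_G$, define $\Phi\colon C^{\Delta,m}_*(G) \to C_*(\bar{\mathcal{F}}(G))$ on a quiver map $f\colon \Delta^n \to G$ with vertex values $v_i = f(i)$ by $\Phi(f) = [v_0, v_1, \ldots, v_n]$, the oriented $n$-simplex, when the $v_i$ are pairwise distinct, and $\Phi(f) = 0$ otherwise. The absence of loops and multiple edges forces any $f$ with distinct vertex values to be an inclusion, so $\{v_0,\dots,v_n\}$ is indeed a simplex of $\bar{\mathcal{F}}(G)$. To verify the chain-map identity: when $f$ is vertex-injective, $\partial\Phi(f) = \Phi(\partial f)$ is immediate; when $f$ has precisely two coincident values $v_a = v_b$, a short sign calculation shows the $\delta_a$- and $\delta_b$-contributions cancel and all other faces remain non-injective, contributing zero. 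The explicit formula makes naturality of $\Phi$ with respect to $\bar{\mathcal{F}}$ immediate as well.

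To prove $\Phi$ is a quasi-isomorphism I apply Theorem~\ref{thm:AcyclicCarriers} in both directions. Define a carrier $\varphi$ from $C_*(\bar{\mathcal{F}}(G))$ to $C^{\Delta,m}_*(G)$ by sending a simplex $S$ to the subcomplex $C^{\Delta,m}_*(G|_S)$, where $G|_S$ is the full subquiver of $G$ on $S$. Because $S$ is a simplex of $\bar{\mathcal{F}}(G)$ there is an inclusion $\Delta^{|S|-1} \to G|_S$ surjective on vertices, so by Lemma~\ref{lem:ExtenededSimplexStrongContraction} $G|_S$ is strongly contractable, and by Theorem~\ref{thm:PreviousHomotopyInv}(1) each $\varphi(S)$ is acyclic. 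The existence part of the acyclic carrier theorem yields a chain map $\Psi$ carried by $\varphi$; the uniqueness part then identifies $\Phi\Psi$ with the identity on $C_*(\bar{\mathcal{F}}(G))$ (both are carried by the standard simplex carrier $S \mapsto \langle S\rangle$) and $\Psi\Phi$ with the identity on $C^{\Delta,m}_*(G)$ (both are carried by $f \mapsto C^{\Delta,m}_*(G|_{V_f})$, where $V_f$ is the image vertex set, itself a simplex of $\bar{\mathcal{F}}(G)$ by the same argument). Hence $\Phi$ is a chain homotopy equivalence, inducing a natural isomorphism on homology.

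The main obstacle is establishing acyclicity of the carriers, which reduces via strong homotopy invariance of $H^{\Delta,m}_*$ to the strong contractability of $G|_S$ for every simplex $S$ of $\bar{\mathcal{F}}(G)$---precisely the content of Lemma~\ref{lem:ExtenededSimplexStrongContraction}. A minor additional step is that Theorem~\ref{thm:AcyclicCarriers} as stated records only the uniqueness direction of the acyclic carrier theorem, so I construct $\Psi$ explicitly by induction on dimension, at each stage choosing a chain in the acyclic $\varphi(S)$ whose boundary matches the already-defined $\Psi(\partial S)$; alternatively one may invoke the standard existence part of the theorem from \cite[\S 13]{Munkers1984}.
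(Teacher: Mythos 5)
Your proof is correct and follows the same essential strategy as the paper: construct the collapse chain map $\Phi$ (the paper's $g$), verify the chain-map identity by the sign cancellation for non-injective singular simplices, and establish the quasi-isomorphism through an acyclic carrier built from the full subquivers $G|_{V_f}$, whose acyclicity is exactly Lemma~\ref{lem:ExtenededSimplexStrongContraction} combined with Theorem~\ref{thm:PreviousHomotopyInv}. There are two small but genuine divergences worth noting. First, you reduce at the outset to $G = \bar{\mathcal{R}}(G)$ via Corollary~\ref{cor:SimplexMapReducedOnly} together with the observation $\bar{\mathcal{F}} = \bar{\mathcal{F}}\circ\bar{\mathcal{R}}$; the paper does not make this reduction and consequently applies the digraph-only Lemma~\ref{lem:ExtenededSimplexStrongContraction} to the full subquiver $G_{f(0),\dots,f(n)}$, which is in general only a quiver — so your reduction tidies up a loose end, at the cost of having to also check naturality of the identification $\bar{\mathcal{F}}(G)=\bar{\mathcal{F}}(\bar{\mathcal{R}}(G))$ (which is immediate on chains). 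Second, you obtain the inverse $\Psi$ via the existence half of the acyclic carrier theorem and prove both composites are homotopic to the identity by two uniqueness applications, whereas the paper constructs its $h$ explicitly (a choice of representative singular simplex per simplex set), gets $g\circ h = \mathrm{id}$ on the nose, and invokes the carrier theorem only for $h\circ g$. Your version is cleaner and more symmetric but, as you note, needs the existence statement of the carrier theorem which the paper's Theorem~\ref{thm:AcyclicCarriers} does not record; your explicit induction (or the citation to \cite[\S 13]{Munkers1984}) closes that gap.
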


\begin{proof}
    Fix a total order on $V_G$. In particular, the homology of the abstract simplicial complex $\bar{\mathcal{F}}(G)$ is independent of this choice. We will show that there are chain maps
    \[
        g\colon C^{\Delta,m}_*(G) \to C_*(\bar{\mathcal{F}}(G))
        \;\;\; \text{and} \;\;\;
        h \colon C_*(\bar{\mathcal{F}}(G)) \to C^{\Delta,m}_*(G)
    \]
    that induce a chain homotopy equivalence between chain complexes $C^{\Delta,m}_*$ and $C_*(\bar{\mathcal{F}}(G))$.
    
    First we define the chain map $g$. Given distinct $v_0,\dots,v_n\in V_G$, define 
    \[
        \sigma_{v_0,\dots,v_n} \in \Sigma_{n+1}
    \]
    to be the permutation in the symmetric group $\Sigma_{n}$ on $n$ elements such that $v_{\sigma(0)},\dots,v_{\sigma(n)}$ is ordered under the total order chosen on $V_G$.  
    Write also $\text{sgn}(\sigma_{v_0,\dots,v_n})$ for the usual sign of the permutation.
    
    We may now define the function $g\colon C^{\Delta,m}_*(G) \to C_*(\bar{\mathcal{F}}(G))$ by linearly extending
    \begin{align*}
        g(f) = 
        \begin{cases}
            \text{sgn}(\sigma_{f(0),\dots,f(n)})\{ f(0),\dots,f(n) \}
            &
            \text{if} \: f(0),\dots,f(n) \: \text{are distinct}
            \\
            0
            &
            \text{otherwise}
        \end{cases}
    \end{align*}
    where $f \colon \Delta^n \to G$ is a singular $n$-simplex.
    
    By construction, 
    \[
        (g \circ \partial_n)(f) = \sum_{i=0}^n \alpha_i \{f(0),\dots,f(n) \}\setminus\{f(i)\} 
        \;\;\; \text{and} \;\;\;
        (\partial_n \circ g)(f) = \sum_{i=0}^n \beta_i \{f(0),\dots,f(n) \}\setminus\{f(i)\} 
    \]
    for some $\alpha_0,\dots,\alpha_n,\beta_0,\dots,\beta_n \in \{1,-1\}$.
    Therefore, to show that $g$ is a chain map it sufficient to check that $\alpha_i = \beta_i$ for $i=0,\dots,n$.
    Furthermore, since $\sigma_{f(0),\dots,f(n)} \in \Sigma_n$ is a composition of transpositions, it is sufficient to check the case when $\sigma_{f(0),\dots,f(n)}$ is a transposition.

    Fix $i=0,\dots,n$. When $\sigma_{f(0),\dots,f(n)}$ exchanges two elements not equal to $f(i)$, then $\alpha_i = \beta_i = (-1)^{i+1}$. Otherwise, $\sigma_{f(0),\dots,f(n)}$ exchanges $f(i)$ with $f(j)$, for some $j=0,\dots,n$ and $j\neq i$.
    If $j<i$, then
    \begin{equation}\label{eq:TranspositionComposition}
        \sigma_{f(0),\dots,\widehat{f(i)},\dots,f(n)}
        =
        (f(i-2),f(i-1)) \circ \cdots \circ (f(j),f(j+1))
    \end{equation}
    as a composition of transpositions and
    $\text{sgn}( \sigma_{f(0),\dots,\widehat{f(i)},\dots,f(n)}) = (-1)^{i-j-1}$.
    We now have that
    \begin{align*}
        \alpha_i =
        (g \circ (-1)^i d_i^{n-1})(f)
        &=
        g((-1)^{i}(f\circ d_{i}^{n-1}))
        \\&=
        (-1)^{i-j-1}(-1)^{i}\{f(0),\dots,\widehat{f(i)},\dots,f(n)\}
        \\&=
        (-1)^{j+1}\{f(0),\dots,\widehat{f(i)},\dots,f(n)\}
        \\&=
        (-1)^{j}(-\{f(0),\dots,\widehat{f(i)},\dots,f(n)\})
        \\&=
        (-1)^{\sigma_{f(0),\dots,f(n)}(i)}(d_{\sigma_{f(0),\dots,f(n)}(i)}^{n-1} \circ g)(f)
        \\&=
        ((-1)^j d_{j}^{n-1} \circ g)(\{f(0),\dots,f(n)\})
        = \beta_i
    \end{align*}
    as $\sigma_{f(0),\dots,f(n)}$ is a transposition such that $\sigma_{f(0),\dots,f(n)}(i)=j$ by assumption.
    When $j>i$, the argument is the same as above except that equation~\eqref{eq:TranspositionComposition} is replaced with
    \[
        \sigma_{f(0),\dots,\widehat{f(i)},\dots,f(n)}
        =
        (f(i),f(i+1)) \circ \cdots \circ (f(j-2),f(j-1))
    \]
    which has sign $\text{sgn}( \sigma_{f(0),\dots,\widehat{f(i)},\dots,f(n)}) = (-1)^{j-i-1}$.
    Therefore, $g$ is a chain map.
    
    Next we define the chain map $h$. Consider distinct $v_0,\dots,v_n\in V_G$ such that there exists a singular simplex $f\colon \Delta^n \to G$ with $\{v_0,\dots,v_n\}$ the image of $f$ on vertices. 
    Then for each such possible vertex set $\{v_0,\dots,v_n\}$ above, make a choice of singular simplex
    \[
        s_{v_0,\dots,v_n}\colon\Delta^n \to G.
    \]
    Define the function $h\colon C_*(\bar{\mathcal{F}}(G)) \to C^{\Delta,m}_*(G)$ by linearly extending
    \begin{align*}
        h(\{v_0,\dots,v_n\}) = 
        \begin{cases}
            \text{sgn}(\sigma_{s_{v_0,\dots,v_n}(0),\dots,s_{v_0,\dots,v_n}(n)})
            s_{v_0,\dots,v_n}
            &
            \begin{aligned}
            &
            \text{if} \: \exists
            \: f\colon \Delta^n \to G \:
            \text{with}
            \\
            &
            \text{image} \:
            \{ v_0,\dots,v_n\}
            \\
            &
            \text{on vertices}
            \end{aligned}
            \\
            0
            &
            \text{otherwise}.
        \end{cases}
    \end{align*}
    The function $h$ can be shown to be a chain map by a similar argument used to show $g$ was a chain map above. In addition, by construction, $g\circ h = \text{id}_{C_*(\mathcal{F}(G))}$. Therefore, it remains to show that $h\circ g$ is chain homotopic to the identity on $C^{\Delta,m}_*(G)$.

    Given $v_0,\dots,v_n \in V_G$, denote by $G_{v_0,\dots,v_n}$ the unique full subquiver of $G$ containing vertices $v_0,\dots,v_n$. To each singular simplex $f\colon \Delta^n \to G$ associate the sub-chain complex $C^{\Delta,m}_*(G_{f(0),\dots,f(n)})$.
    By Lemma~\ref{lem:ExtenededSimplexStrongContraction} each $G_{f(0),\dots,f(n)}$ is strongly contractible. Hence, using the strong homotopy invariance of $H_*^{\Delta,m}$ from Theorem~\ref{thm:PreviousHomotopyInv}, the chain complexes $C^{\Delta,m}_*(G_{f(0),\dots,f(n)})$ is acyclic and provides us with an acyclic carrier $\varphi$ on the basis of singular simplices in $C^{\Delta,m}_*(G)$.
    Moreover, both $h\circ g$ and $\text{id}_{C^{\Delta,m}_*(G)}$ are carried by $\varphi$.
    Therefore, $h\circ g$ and $\text{id}_{C^{\Delta,m}_*(G)}$ are chain homotopic by the acyclic carrier theorem (Theorem~\ref{thm:AcyclicCarriers}).
    
    It remains to check that naturality of the isomorphism on homologies.
    To this end, let $\phi \colon G_1 \to G_2$ be a digraph map. Using the functoriality of $H^{\Delta,m}_*$ from Proposition~\ref{prop:InitialFuctoriality}, the map $\phi$ induces a chain map $\phi_{\#}\colon C^{\Delta,m}_*(G_1) \to C^{\Delta,m}_*(G_2)$ as in equation~\eqref{eq:InducedMapHomology} by linearly extending $\phi_\#(f)=f\circ \phi$ for each singular simplex $f\colon \Delta^n \to G$.
    Similarly, the simplicial map $F(\phi)$ defined in equation~\eqref{eq:FlagInducedMap} induces the map $F(\phi)_{\#}\colon C_*(F(G_1)) \to C_*(F(G_2))$ by linearly extending $\mathcal{F}(\phi)_\#(\{v_0,\dots,v_n\})=(\{\phi(v_0),\dots,\phi(v_n)\})$ for each simplex $\{v_0,\dots,v_n\}$ in abstract simplicial complex $F(G_1)$.

    We now observe that the following diagram of chain maps commutes
    \[
        \xymatrix{
        C_*^{\Delta,m}(G_1) \ar[d]^g \ar[r]^{\phi_\#} & C_*^{\Delta,m}(G_2) \ar[d]^g \\
        C_*(F(G_1)) \ar[r]^{F(\phi)_\#} & C_*(F(G_2))}
    \]
    as it commutes for each singular $n$-simplex, with both $(g \circ \phi_\#)(f)$ and $(F(\phi)_\# \circ g)(f)$ being 
    equal to 
    \[
        \text{sgn}(\sigma_{(\phi \circ f)(0),\dots,(\phi \circ f)(n)})\{ (\phi \circ f)(0),\dots,(\phi \circ f)(n) \}
    \]
    when $(\phi \circ f)(0),\dots,(\phi \circ f)(n)$ are distinct and $0$ otherwise.
    Therefore, the naturality condition follows as we have already shown that the chain map $g$ induces the isomorphism on homology. 
\end{proof}

\section{Computation of \texorpdfstring{$H^{\Delta,h}_*$}{singular homomorphism homology}}\label{sec:ComputingHomomorphismHomology}

In this section we provide object-wise maps from a quiver to a suitably simpler $\Delta$-set called the partial directed flag complex, whose simplicial homology coincides with $H^{\Delta,h}_*$. In particular, the object level construction lifts to a natural functorial construction on chains, and hence, to a natural functorial construction on homology. The naturality of our result implies persistent $H^{\Delta,h}_*$ can be computed more efficiently using the homology of the partial directed flag complex. Furthermore, using the partial directed flag complex, we are able to complete a proof of the weak local strong homotopy invariance of $H^{\Delta,h}_*$ on quivers. 

Throughout the section assume that $G$ is a quiver and $n\geq 0$ is an integer unless otherwise stated. Note that some results in this section require the axiom of choice if the quiver $G$ contains an infinite number of edges.

\subsection{The partially reduced quiver}\label{sec:PartiallyReducedQuiver}

In this subsection, we provide the construction of the partially reduced quiver, which transitions locally between the reduced digraph described from Section~\ref{sec:ReducedDigraph} and the original quiver depending on the presence of loops. The partially reduced quiver is local strong homotopy equivalent to the quiver itself. Therefore, combined with the local strong homotopy invariance of $H_*^{\Delta,h}$ shown in Section~\ref{sec:HomotopyTheory} we obtain a general simplification for the computation of $H_*^{\Delta,h}$ that we extend in the next subsection.

\begin{definition}
    Let $G$ be a quiver. Then define the \emph{partially reduced quiver} $\tilde{\mathcal{R}}(G)$ to be the quiver with vertices $V_{\tilde{\mathcal{R}}(G)}=V_G$ and edges $E_{\tilde{\mathcal{R}}(G)} = E^1_{\tilde{\mathcal{R}}(G)} \cup E^2_{\tilde{\mathcal{R}}(G)}$ where
    \begin{align}\label{eq:SeperatingEdgesByLoopEnds}
        E^1_{\tilde{\mathcal{R}}(G)} & = \nonumber
        \{ e_G \in E_G \: | \:
        e_G \: \text{does not have a loop}
        \}
        \\
        E^2_{\tilde{\mathcal{R}}(G)} & =
        \{ \tilde{\mathcal{R}}(G,u,v)
        \: | \:
        u,v \in V_G 
        \: \text{satisfying} \:
        \exists \:
        e \in E_G \:
        \text{with a loop such that} \:
        s(e) = u \: \text{and} \: t(e) = v 
        \}
    \end{align}
    with
    \begin{align*}
        s_{\tilde{\mathcal{R}}(G)}(e_G) = s_G(e_G), \;
        t_{\tilde{\mathcal{R}}(G)}(e_G) = t_G(e_G), \;
        s_{\tilde{\mathcal{R}}(G)}(\tilde{\mathcal{R}}(G,u,v)) = u
        \; \text{and} \;
        t_{\tilde{\mathcal{R}}(G)}(\tilde{\mathcal{R}}(G,u,v)) = v.
    \end{align*}
\end{definition}

In brief, the partially reduced quiver collapses all multiple edges with a loop to a single edge. The partially reduced quiver extends to a functor $\tilde{\mathcal{R}}\colon \textbf{Quiv}_h \to \textbf{Quiv}_h$ by setting
\begin{align}\label{eq:PartialyReducedQuiverFunctor}
        & \tilde{\mathcal{R}}(h)(v) = h(v), \nonumber
        \\
        & \tilde{\mathcal{R}}(h)(e_G) = 
        \begin{cases}
            h(e_G) & \text{if} \: h(e_G) \: \text{has no loops}
            \\
            \tilde{\mathcal{R}}(G',s(e_G),t(e_G)) & \text{otherwise,}
        \end{cases}
        \\ \nonumber
        \text{and} & \;
        \tilde{\mathcal{R}}(h)(\tilde{\mathcal{R}}(G,u,v)) = \tilde{\mathcal{R}}(G',h(u),h(v))
\end{align}
for quiver homomorphism $h \colon G \to G'$, $u,v\in V_G$, $e_G \in E^1_{\tilde{\mathcal{R}}(G)}$, and  $\tilde{\mathcal{R}}(G,u,v) \in E^2_{\tilde{\mathcal{R}}(G)}$.
This is well defined as homomorphism $h$ must send loops to loops.

We now consider the local strong homotopy class of the partially reduced quivers.

\begin{theorem}\label{thm:StrongHMultiEdgeContractionBetweenLoops}
    The quivers $G$ and $\tilde{\mathcal{R}}(G)$ are local strong $h$-homotopy equivalent.
\end{theorem}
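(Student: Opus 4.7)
The plan is to construct explicit quiver homomorphisms $f \colon G \to \tilde{\mathcal{R}}(G)$ and $g \colon \tilde{\mathcal{R}}(G) \to G$ mirroring the maps in Proposition~\ref{prop:StrongMultiEdgeContraction}, but carefully tailored so that the homotopy $g \circ f \simeq^{lSh} \text{id}_G$ is witnessed by a homomorphism from a strong mapping cylinder. The guiding observation is that the edges collapsed by $\tilde{\mathcal{R}}$ are precisely those incident to a vertex carrying a loop, and those loops provide the targets in $G$ needed for the transition edges of the cylinder --- exactly the ingredient that was unavailable in the setting of Proposition~\ref{prop:StrongMultiEdgeContraction}.

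First I would define $f$ to be the identity on vertices and on edges without a loop endpoint, and to send each edge $e \in E_G$ with a loop endpoint to $\tilde{\mathcal{R}}(G, s(e), t(e))$. For each pair $u, v \in V_G$ admitting some edge $e \in E_G$ from $u$ to $v$ with a loop endpoint, I would then choose a representative $e_{u,v} \in E_G$ (a loop at $u$ in case $u = v$), and define $g$ to be the identity on vertices, the identity on $E^1_{\tilde{\mathcal{R}}(G)}$, and to send $\tilde{\mathcal{R}}(G, u, v)$ to $e_{u,v}$. A short check verifies that $f$ and $g$ are homomorphisms (in particular, all loops at a loop-carrying vertex $v$ collapse under $f$ to the unique loop $\tilde{\mathcal{R}}(G, v, v)$) and that $f \circ g = \text{id}_{\tilde{\mathcal{R}}(G)}$, so only $g \circ f \simeq^{lSh} \text{id}_G$ remains non-trivial.

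For the remaining homotopy I would let $G'$ be the full subquiver of $G$ on the vertices carrying a loop, so that $g \circ f$ and $\text{id}_G$ already agree on edges with no endpoint in $G'$. I would then build $F \colon M^S_{G' \hookrightarrow G} \to G$ by case analysis on the edge types listed in Definition~\ref{def:MappingCylinder}: put $F((e, 0)) = (g \circ f)(e)$ for $e \in E_G$; set $F((e, 1)) = F((e, 0 \to 1)) = e$ for $e \in E_{G'}$; set $F(e_{e_G}) = e_G$ for each $e_G$ giving rise to an edge of $E(G, G')$; and for each $v \in V_{G'}$ send the transition edge $(v, 0 \to 1)$ to any chosen loop at $v$ in $G$. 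The identities $F \circ i_0 = g \circ f$ and $F \circ i_1 = \text{id}_G$ will then hold by construction.

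The main obstacle is verifying that $F$ is a well-defined homomorphism. The source/target compatibility on each edge type is immediate from the definitions, and the only genuinely delicate point is that the vertex transitions $(v, 0 \to 1)$ are forced to land on loops of $G$ at $v$ --- which is exactly why $G'$ is chosen to consist only of loop-carrying vertices. The homomorphism requirement that loops of $M^S_{G' \hookrightarrow G}$ map to loops of $G$ is then automatic, as every loop in the cylinder is of the form $(l, 0)$ or $(l, 1)$ for some loop $l \in E_G$, and $F$ restricts on these to $g \circ f$ or $\text{id}_G$, respectively. With $F$ in hand, the identity $f \circ g = \text{id}_{\tilde{\mathcal{R}}(G)}$ on the other side completes the local strong $h$-homotopy equivalence.
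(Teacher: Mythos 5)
Your proposal is correct and takes essentially the same route as the paper: define $f$ and $g$ exactly as you do, take $G'$ to be the full subquiver on loop-carrying vertices, and realize the homotopy by a homomorphism $F$ out of the strong mapping cylinder $M^S_{G'\hookrightarrow G}$ that uses the loops to absorb the vertex transition edges $(v,0\to 1)$. The only cosmetic difference is the orientation: the paper constructs $F$ with $F\circ i_0 = \text{id}_G$ and $F\circ i_1 = g\circ f$ (sending $(e,0)\mapsto e$ and $(v',0\to 1)\mapsto e_{v',v'}$), while you swap the two ends; since $\simeq^{lSh}$ is by definition the equivalence relation generated by the one-step relation, this is immaterial.
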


\begin{proof}
    Define homomorphism $f\colon G \to \tilde{\mathcal{R}}(G)$ by
    \[
        f(v) = v
        \;\;\; \text{and} \;\;\;
        f(e) = 
        \begin{cases}
            e & \text{if} \; e \in E^1_{\tilde{\mathcal{R}}(G)}
            \\
            \tilde{\mathcal{R}}(G,s_G(e),t_G(e)) & \text{otherwise} 
        \end{cases}
    \]
    which is well defined by the construction of $E^1_{\tilde{\mathcal{R}}(G)}$, $E^2_{\tilde{\mathcal{R}}(G)}$ and $\tilde{\mathcal{R}}(G,s_G(e),t_G(e))$ in equation~\eqref{eq:SeperatingEdgesByLoopEnds}. 
    Given $u,v\in E_G$ such that there exists $e\in E_G$ with a loop satisfying $s_G(e)=u$ and $t_G(e)=v$, choose a unique element
    \[
        e_{u,v}\in \{ e \in E_G \: | \: s(e) = u \: \text{and} \: t(e) = v \}.
    \]
    Define $g\colon \tilde{\mathcal{R}}(G) \to G$ by
    \[
        g(v) = v, \;\;\;
        g(e_G) = e_G 
        \;\;\; \text{and} \;\;\;
        g(\tilde{\mathcal{R}}(G,u,v)) = e_{u,v}.
    \]
    Homomorphism $g$ is also well defined by the construction of $E^1_{\tilde{\mathcal{R}}(G)}$ and $E^2_{\tilde{\mathcal{R}}(G)}$ in equation~\eqref{eq:SeperatingEdgesByLoopEnds}.
    In particular, $f \circ g = \text{id}_{\tilde{\mathcal{R}}(G)}$.

    Let $G'$ be the full subquiver of $G$ on vertices $V_{G'} = \{ v\in V_G \: | \: v \: \text{has a loop} \}$. Then define $F\colon M^S_{G'\hookrightarrow G} \to G$ given by
    \begin{align*}
        & F((v,0)) = v, \;
        F((v',1)) = v', \;
        F((e,0)) = e, \;
        F((v', 0\to 1)) = e_{v',v'}, \\
        & F((e',1)) = F((e', 0\to 1)) = e_{s(e'),t(e')}, \;
        \text{and} \;
        F(e'') = e_{s(e''),t(e'')}
    \end{align*}
    for $v\in V_G$, $v' \in V_{G'}$, $e\in E_G$, $e'\in E_{G'}$, and $e''\in E(G,G')$ as given in Definition~\ref{def:MappingCylinder} of the strong mapping cylinder $M^S_{G'\hookrightarrow G}$ of $G$ and $G'$.
    By construction, the homomorphism $F$ is a well defend $1$-step local strong $h$-homotopy between $\text{id}_G$ and $g \circ f$. 
   Therefore, $\text{id}_G \simeq^{lSh}_1 g\circ h$ which completes the proof.
\end{proof}

In fact we have proved that $\tilde{\mathcal{R}}(G)$ is a local strong deformation retraction of $G$ and we obtain the following corollary.

\begin{corollary}\label{cor:ReducedHomomorphismQuiver}
    Let $G$ be a quiver, then
    \[
        H_*^{\Delta,h}(G) = H_*^{\Delta,h}(\tilde{\mathcal{R}}(G))
    \]
    naturally with respect to induced maps.
\end{corollary}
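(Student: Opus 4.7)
The plan is to mirror the argument used in the proof of Corollary~\ref{cor:SimplexMapReducedOnly}, substituting local strong $h$-homotopy for strong homotopy and invoking Theorem~\ref{thm:LocalStrongHInvIntial} in place of the strong homotopy invariance of $H^{\Delta,m}_*$. Indeed, this corollary is essentially a packaging of Theorem~\ref{thm:StrongHMultiEdgeContractionBetweenLoops} together with the local strong $h$-homotopy invariance of $H_*^{\Delta,h}$ established earlier.

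First, I would observe that the proof of Theorem~\ref{thm:StrongHMultiEdgeContractionBetweenLoops} explicitly provides quiver homomorphisms $f \colon G \to \tilde{\mathcal{R}}(G)$ and $g \colon \tilde{\mathcal{R}}(G) \to G$ satisfying $f \circ g = \mathrm{id}_{\tilde{\mathcal{R}}(G)}$ and $g \circ f \simeq^{lSh}_1 \mathrm{id}_G$. Applying Theorem~\ref{thm:LocalStrongHInvIntial} to the latter relation yields $g_* \circ f_* = \mathrm{id}$ on $H_*^{\Delta,h}(G)$, while the functoriality of $H_*^{\Delta,h}$ from Proposition~\ref{prop:InitialFuctoriality}(2) applied to $f \circ g = \mathrm{id}_{\tilde{\mathcal{R}}(G)}$ gives $f_* \circ g_* = \mathrm{id}$ on $H_*^{\Delta,h}(\tilde{\mathcal{R}}(G))$. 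Hence $f_*$ and $g_*$ are mutually inverse isomorphisms.

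For the naturality statement, I would verify that the collection $\{f_G\}_G$ constitutes a natural transformation from the identity functor on $\textbf{Quiv}_h$ to the functor $\tilde{\mathcal{R}}$. Given a quiver homomorphism $\phi \colon G_1 \to G_2$, one checks the identity
\[
    \tilde{\mathcal{R}}(\phi) \circ f_{G_1} = f_{G_2} \circ \phi
\]
by case analysis on whether an edge $e \in E_{G_1}$ has a loop, and correspondingly on whether $\phi(e)$ has a loop. The crucial observation is that quiver homomorphisms map loops to loops, hence vertices incident to loops to vertices incident to loops, so the case splits in the definitions of $f_{G_i}$ and of $\tilde{\mathcal{R}}(\phi)$ from equation~\eqref{eq:PartialyReducedQuiverFunctor} align in each of the three scenarios (no loop at either end; no loop on $e$ but loop after applying $\phi$; loop already present on $e$). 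Passing to the induced chain maps and then to homology via Proposition~\ref{prop:InitialFuctoriality}(2) then transports this naturality to the homology level.

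I do not anticipate a serious obstacle, as the homotopy equivalence has already been constructed in Theorem~\ref{thm:StrongHMultiEdgeContractionBetweenLoops} and the invariance established in Theorem~\ref{thm:LocalStrongHInvIntial}. The only point requiring mild care is the naturality check, where one must confirm that the definition of $\tilde{\mathcal{R}}$ on morphisms is precisely the one which makes $f$ a natural transformation; this is essentially by design, and amounts to a brief case-by-case verification.
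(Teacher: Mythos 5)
Your proposal is correct and follows essentially the same route as the paper's proof: deduce the isomorphism from Theorem~\ref{thm:StrongHMultiEdgeContractionBetweenLoops} together with the local strong $h$-homotopy invariance of Theorem~\ref{thm:LocalStrongHInvIntial}, and obtain naturality from the functorial construction of $\tilde{\mathcal{R}}$ in equation~\eqref{eq:PartialyReducedQuiverFunctor}. Your write-up just spells out the details (the pair $f,g$ realising the homotopy equivalence, and the case-by-case verification that $\tilde{\mathcal{R}}(\phi)\circ f_{G_1}=f_{G_2}\circ\phi$) that the paper leaves implicit.
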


\begin{proof}
    The corollary follows from Theorem~\ref{thm:StrongHMultiEdgeContractionBetweenLoops} and the local strong $h$-homotopy invariance of $H_*^{\Delta,h}$ shown in Theorem~\ref{thm:LocalStrongHInvIntial}. The naturality of this isomorphism can be obtained directly from the construction of $\tilde{\mathcal{R}}$ as a functor in equation~\eqref{eq:PartialyReducedQuiverFunctor} and induced maps for $H_*^{\Delta,h}$ given in equation~\eqref{eq:InducedMapHomology}.
\end{proof}

\subsection{The partial directed flag complex}\label{sec:PartialFlagComplex}

Motivated by computation, we now extend the construction of the partially reduced quiver functor $\tilde{\mathcal{R}}$ from the previous subsection and construct for any quiver $G$ a $\Delta$-set called the partial directed flag complex with homology isomorphic to $H^{\Delta,h}_*(G)$. While the partial directed flag complex is not a functor, the composition with the chain functors becomes functorial. When the quiver $G$ has no loops the partial directed flag complex coincides with $\mathcal{F}(G)$. If every vertex of $G$ has a loop and a total order on $V_G$ is selected, the partial directed flag complex coincides with $\bar{\mathcal{F}}(G)$ as a $\Delta$-set. The construction of the partial directed flag complex allows us to complete the section by showing that $H^{\Delta,h}_*$ is invariant under the weak local strong homotopy of quivers given in Definition~\ref{def:QuiverLocalStrongHomotopy}. 

The structure of the subsection is similar to Section~\ref{sec:RducedDirectedFlag}, though the construction of the reduced directed flag complex is considerably more detailed. We first set out the following terminology.

\begin{definition}
    Let $G$ be a quiver. A total order $<$ on $V_G$ is called \emph{loop maximal} if $u,v \in V_G$ such that $u$ has a loop and $v$ has no loop, then $v < u$.
\end{definition}

It is clear that a loop maximal total order exists on the vertices of any quiver. We now set out the central construction of the section.

Let $G$ be a quiver and $<$ a loop maximal total order on the vertices of $G$.
Then define the \emph{partial directed flag complex} $\tilde{\mathcal{F}}_{<}(G)$ to be the following $\Delta$-set. Note that we show later that $H_*(\tilde{\mathcal{F}}_<(G))$ is independent of the choice of loop maximal total order $<$. In dimensions $0$ and $1$ we define
\begin{align*}
    \tilde{\mathcal{F}}_<(G)_0 &= V_G \\
    \tilde{\mathcal{F}}_<(G)_1 &= E^1_{\tilde{\mathcal{F}}_<(G)} \cup E^2_{\tilde{\mathcal{F}}_<(G)}
\end{align*}
where
\begin{align}\label{eq:PartailFlagEdges}
    E^1_{\tilde{\mathcal{F}}_<(G)} &= 
    \{\{e_G\}  \: | \: e_G \in E_G \: \text{has no loop} \} \nonumber \\
    E^2_{\tilde{\mathcal{F}}_<(G)} &= 
    \{ \{ \tilde{\mathcal{F}}_{<}(u,v) \} 
    \: | \:
    u,v\in V_G, \:
    u<v, \: \exists \: e \in E_G 
    \: \text{with} \:
    \\ & \;\;\;\;\;\;\;\;
    s(e) = u, \: t(e) = v 
    \; \text{or} \:
    s(e) = v, \: t(e) = u,
    \: \text{and $e$ has a loop} \}
    \nonumber
\end{align}
with face maps given by
\begin{align*}
    & d^0_0(\{e_G\}) = t(e_G),\: d^0_1(\{e_G\}) = s(e_G), \\
    \text{and} \;
    & d^0_0(\{\tilde{\mathcal{F}}_<(u,v) \}) = v, \:
    d^0_1(\{\tilde{\mathcal{F}}_<(u,v)\}) = u.
\end{align*}
It can be useful to view the construction of the $1$-skeleton above as providing a function $\tilde{\mathcal{F}}_<^1(G) \colon E_G \to \tilde{\mathcal{F}}_<(G)_1$ given by
\[
    \tilde{\mathcal{F}}_<^1(G)(e) =
    \begin{cases}
        \{ \tilde{\mathcal{F}}_{<}(s(e),t(e)) \}
        & \text{if} \: e \: \text{has a loop and} \: s(e) < t(e)
        \\
        \{ \tilde{\mathcal{F}}_{<}(t(e),s(e)) \}
        & \text{if} \: e \: \text{has a loop and} \: t(e) < s(e)
        \\
        \{ e \} & \text{otherwise}
    \end{cases}
\]
for each edge $e \in E_G$.
For simplicity of notation, from now on we denote the image of $\tilde{\mathcal{F}}_<^1(G)(e)$ by $\{\alpha_e\}$ when it is clear from the context in which $\tilde{\mathcal{F}}_<(G)$ it forms an edge. 

\begin{remark}\label{rmk:PartialCollapsTotalOrder}
    It is important to note that the image of the vertices of any singular simplicial inclusion $f\colon \Delta^n \to G$ can be given a total order $<_f$ differing from that on the indices in $\Delta^n$ instead induced by the elements of $\tilde{\mathcal{F}}_<(G)_1$ as follows. 

    For any integers $0\leq i<j \leq n$ we have $i \to j \in E_{\Delta^n}$ and
    $\{\alpha_{f(i \to j)}\} \in \tilde{\mathcal{F}}_<(G)_1$.
    Define the order $<_f$ on vertices $i$ and $j$ by $i <_f j$ if and only if $d^0_1(\{\alpha_{f(i\to j)}\}) = f(i)$ and $d^0_0(\{\alpha_{f(i\to j)}\}) = f(j)$, with $j <_f i$ otherwise.
    The total order $<_f$ is well defined since
    \begin{enumerate}[(1)]
        \item 
        vertices $f(0),\dots,f(n) \in v_G$ are distinct as $f$ is an inclusion;
        \item
        the order $<_f$ on vertices $i=0,\dots,n$ such that $f(i)$ has no loop is determined by the total order on vertices from $\Delta^n$ using the definition of $E^1_{\tilde{\mathcal{F}}_<(G)}$ in equation~\eqref{eq:PartailFlagEdges};
        \item 
        the order $<_f$ on vertices $i=0,\dots,n$ such that $f(i)$ has a loop is determined by the total order $<$ using the definition of $E^2_{\tilde{\mathcal{F}}_<(G)}$ in equation~\eqref{eq:PartailFlagEdges};
        \item
        otherwise $i <_f j$ for $i,j=0,\dots,n$ such that $f(i)$ has no loop and $f(j)$ has a loop, by loop maximality of $<$ and the definition of $E^2_{\tilde{\mathcal{F}}_<(G)}$ in equation~\eqref{eq:PartailFlagEdges}.
    \end{enumerate}
\end{remark}

We now return to the construction of $\tilde{\mathcal{F}}_<(G)_n$ for $n\geq 2$ and define
\begin{equation}\label{eq:HigherDeltaSet}
    \tilde{\mathcal{F}}_<(G)_n =
    \{ \{ \alpha_{f(i\to j)}\}_{0\leq i < j \leq n} 
    \: | \: 
    f\colon \Delta^n \to G 
    \: \text{is an inclusion} \}
\end{equation}
with face maps given by 
\begin{equation}\label{PartialFlagFaceMaps}
    d^{n-1}_k\left(\{\alpha_{e_{i,j}}\}_{0\leq i < j \leq n}\right) =
    \{\alpha_{e_{\phi_k(i),\phi_k(j)}}\}_{0\leq i < j \leq n-1}
\end{equation}
for $k=0,\dots,n$ and where $\phi_k\colon \{0,\dots,n-1\} \to \{0,\dots,n\}$ is the function
\[
    \phi_k(t) =
    \begin{cases}
        t & \text{if} \: t < k \\
        t+1 & \text{otherwise}.
    \end{cases}
\]
The face maps $d^{n}_k$ for $k=0,\dots,n+1$ satisfy the conditions of Equation~\eqref{eq:FaceMapConditions} when the image lies in dimension $n \geq 2$ or $0$ by construction and when the image lies in dimension $1$ using the fact that edges respect the total order $<_f$ from Remark~\ref{eq:HigherDeltaSet}.

\begin{definition}\label{def:InducingSingularSimplex}
    Let $n\geq 1$ and $\alpha =\{\alpha_{e_{i,j}}\}_{0\leq i < j \leq n}\in \tilde{F}_<(G)_n$. Then by its construction in equation~\eqref{eq:PartailFlagEdges}~or~\eqref{eq:HigherDeltaSet}, $\alpha$ depends on the the existence of a singular simplex inclusion $f\colon\Delta^n \to G$. We call such a singular $n$-simplex $f$ a \emph{singular simplex inducing} $\alpha$. When $n=0$ the \emph{singular simplex inducing} $v\in V_G$ is defined to be the singular zero simplex $f\colon \Delta^0 \to G$ such that $f(0) = v$.
\end{definition}

Unlike the flag complex $\mathcal{F}(G)$ and the reduced flag complex $\bar{\mathcal{F}}(G)$, a choice of loop maximal total order $<$ on $V_G$ is required in order to define $\tilde{F}_<(G)$. Consequently, we cannot directly extend $\tilde{F}_<$ to a functor similarly to $\mathcal{F}$ and $\bar{\mathcal{F}}$ in equations~\eqref{eq:FlagFunctor}~and~\eqref{eq:FlagInducedMap}, respectively. The next example demonstrates that in general even in the presence of a choice of total orders on the vertex sets of two quivers, an induced map on the resulting partial directed flag complexes cannot necessarily be defined. However, for computation of persistent homology we required only that the homology of the partial directed flag complex interacts naturally with the homology of a filtered quiver. Therefore, a functorial partial directed flag complex construction $\colon\textbf{Quiv}_h \to \textbf{DSets}$ is not essential.

\begin{example}
    Let $G_1$ and $G_2$ be the following quivers.
    \begin{center}
        \;\;\;\;\;\;\;\;\;
        \tikz {
            \node (a) at (0,0) {$u_1$};
            \node (b) at (2,0) {$u_2$};
            \draw[->] (a) to [out=45,in=135] node[pos=0.5,above] {$e_1^1$} (b);
            \draw[->] (b) to [out=225,in=315] node[pos=0.5,below] {$e^1_2$} (a);
        }
        \;\;\;\;\;\;\;\;\;\;\;\;\;\;\;\;\;\;\;\;\;\;\;\;\;\;\;
        \tikz {
            \node (a) at (0,0) {$v_1$};
            \node (b) at (2,0) {$v_2$};
            \node (d) at (2.75,-0.145) {};
            \node (c) at (2.75,0.145) {};
            \draw[->] (a) to [out=45,in=135] node[pos=0.5,above] {$e^2_1$} (b);
            \draw[->] (b) to [out=225,in=315] node[pos=0.5,below] {$e^2_2$} (a);
            \draw[-] (b) to [out=315,in=270] (c);
            \draw[->] (d) to [out=90,in=45] (b);
        }
        \;\;\;
    \end{center}
    There is an quiver homomorphism $h \colon G_1 \to G_2$ determined by
    \[
        h(u_1) = v_1,
        \;\;\;
        h(u_2) = v_2,
        \;\;\;
        h(e_1^1) = e_1^2,
        \;\;\; \text{and} \;\;\;
        h(e_2^1) = e_2^2.
    \]
    Since $G_1$ has no loops, there are two possible loop maximal total orders on $V_{G_1}$, both of which are the same up to symmetry of the quiver. Hence, without loss of generality we choose to take $u_1 <_1 u_2$ as a total order on $V_{G_1}$. Meanwhile, $G_2$ has a loop at vertex $v_2$. Therefore, the only loop maximal total order on $V_{G_2}$ is $v_1 <_2 v_2$.

    The $1$-dimensional $\Delta$-sets $\tilde{\mathcal{F}}_{<_1}(G_1)$ and $\tilde{\mathcal{F}}_{<_2}(G_2)$ are
    \begin{center}
        \tikz {
            \node (a) at (0,0) {$u_1$};
            \node (b) at (2,0) {$u_2$};
            \draw[->] (a) to [out=45,in=135] node[pos=0.5,above] {$\tilde{e}^1_1$} (b);
            \draw[->] (b) to [out=225,in=315] node[pos=0.5,below] {$\tilde{e}^1_2$} (a);
        }
        \;\;\;\;\;\;\;\;\;
        \;\;\;\;\;\;\;\;\;
        \;\;\;\;\;\;
        \tikz {
            \node (c) at (0,0) {};
            \node (a) at (0,1.05) {$v_1$};
            \node (b) at (2,1.05) {$v_2$};
            \draw[->] (b) -- (a);
        }
        \;
    \end{center}
    where the arrows denote the $1$-simplicies with the image of $d^0_0$ the vertex at the source of the arrow and the image of $d^0_1$ the vertex at the target of the arrow within each $\Delta$-set, respectively.

    An induced map $\tilde{\mathcal{F}}(h)\colon \tilde{\mathcal{F}}_{<_1}(G_1) \to \tilde{\mathcal{F}}_{<_2}(G_2)$ would be required to preserve the image of $h$ on vertices, with $\tilde{\mathcal{F}}(h)(u_1) = v_1$ and $\tilde{\mathcal{F}}(h)(u_2) = v_2$. However, if the image of $\tilde{\mathcal{F}}(h)$ restricted to vertices is of the form above, then $\tilde{\mathcal{F}}(h)$ cannot be a map of $\Delta$-sets as there does not exist a $1$-simplex in $\tilde{\mathcal{F}}_{<_2}(G_2)$ that can be the image of $\tilde{\mathcal{F}}(h)(\tilde{e}^1_1)$.
\end{example}

While the construction of $\tilde{F}_<(G)$ cannot be extended to a functor $\colon \textbf{Quiv}_h \to \textbf{DSets}$, we now show that after composition with the chain functor a functorial construction can be extended to chain maps. 

Recall that $C_*\colon {\bf DSet} \to \textbf{Chain}$ is the chain functor defined at the end of Section~\ref{sec:Spaces}.
Let $\mathcal{C}$ be a subcategory of ${\bf Quiv}_h$ and to every object $G\in \mathcal{C}$ associate a loop maximal total order $<_G$.
We now define $\tilde{f}_{\{<_G\}_{G\in \mathcal{C}}}\colon \mathcal{C} \to \textbf{Chain}$ which we prove is a functor in the next proposition.
On objects $\tilde{f}_{\{<_G\}_{G\in \mathcal{C}}}$ is given by
\begin{align*}
    \tilde{f}_{\{<_G\}_{G\in \mathcal{C}}}(G') = C_*(\tilde{F}_{<_{G'}}(G'))
\end{align*}
for each $G' \in \mathcal{C}$.
On quiver homomorphisms $\phi \colon G_1 \to G_2$ in $\mathcal{C}$, $\tilde{f}_{\{<_G\}_{G\in \mathcal{C}}}$ is defined by linearly extending
\begin{align}\label{eq:PartialFlagOnmorphisms}
    & \tilde{f}_{\{<_G\}_{G\in \mathcal{C}}}(\phi)(v)
    =
    \phi(v)
     \;\;\; \text{and}
     \nonumber
     \\ &
    \tilde{f}_{\{<_G\}_{G\in \mathcal{C}}}(\phi)
    \left(\{\alpha_{e_{i,j}}\}_{0\leq i < j \leq n}\right)
    \begin{cases}
        \text{sgn}(\{\alpha_{e_{i,j}}\}_{0\leq i < j \leq n},\phi)
        \{\alpha_{\phi(e_{i,j})}\}_{0\leq i < j \leq n}
        &
        \begin{aligned}
            & \text{if} \: s(\phi(e_{i,j})) \neq t(\phi(e_{i,j})) \\
            & \text{for each} \: 0 \leq i < j \leq n
        \end{aligned}
        \\
        0 & \text{otherwise}
    \end{cases}
\end{align}
where $\text{sgn}(\{\alpha_{e_{i,j}}\}_{0\leq i < j \leq n},\phi)$ is the sign of a permutation realised as follows.

Using Definition~\ref{def:InducingSingularSimplex}, any choice of singular simplex $f\colon \Delta^n \to G_1$ inducing $\{\alpha_{e_{i,j}}\}_{0\leq i < j \leq n}$ provides the total order $<_f$ on $0,\dots,1$ described in Remark~\ref{rmk:PartialCollapsTotalOrder}. Similarly, any choice of singular simplex inducing $f'\colon \Delta^n \to G_2$ of $\{\alpha_{\phi(e_{i,j})}\}_{0\leq i < j \leq n}$ provides the total order $<_{f'}$ on $0,\dots,1$.
In particular, function $\tilde{f}_{\{<_G\}_{G\in \mathcal{C}}}(\phi)$ is well defined as $f'$ can be chosen to be $\phi \circ f$, hence exits. In the case when $\phi \circ f$ is not an inclusion $\tilde{f}_{\{<_G\}_{G\in \mathcal{C}}}(\phi) = 0$ by construction. Otherwise, we set $\text{sgn}(\{\alpha_{e_{i,j}}\}_{0\leq i < j \leq n},\phi)$ to be the sign of the permutation between the total orders $<_f$ and $<_{f'}$ on $0,\dots,n$.

We now verify that $\tilde{f}_{\{<_G\}_{G\in \mathcal{C}}}$ acts functorialy.

\begin{proposition}\label{prop:PartialFlagFunctor}
    Let $\mathcal{C}$ be a subcategory of ${\bf Quiv}_h$ and to every object $G\in \mathcal{C}$ associate a loop maximal total order $<_G$. Then 
    \[
        \tilde{f}_{\{<_G\}_{G\in \mathcal{C}}}\colon \mathcal{C} \to \textbf{Chain}
    \]
    is a functor.
\end{proposition}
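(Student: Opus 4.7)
The plan is to verify the three axioms of a functor to $\textbf{Chain}$. The object part is immediate since $\tilde{\mathcal{F}}_{<_G}(G)$ is a $\Delta$-set and $C_*\colon\textbf{DSets}\to\textbf{Chain}$ is already a functor, so $\tilde{f}_{\{<_G\}_{G\in\mathcal{C}}}(G)$ is automatically a chain complex. The remaining tasks are to show that each $\tilde{f}_{\{<_G\}_{G\in\mathcal{C}}}(\phi)$ is a well-defined chain map, and that identity morphisms and compositions are preserved.

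First I would verify well-definedness. Two singular simplices $f,f'\colon\Delta^n\to G_1$ inducing the same $\alpha=\{\alpha_{e_{i,j}}\}_{0\leq i<j\leq n}$ agree on vertices and yield the same total order $<_f=<_{f'}$, as observed in Remark~\ref{rmk:PartialCollapsTotalOrder} and in the paragraph after Definition~\ref{def:InducingSingularSimplex}. Hence both the image element $\{\alpha_{\phi(e_{i,j})}\}_{0\leq i<j\leq n}$ and the sign $\text{sgn}(\alpha,\phi)$ depend only on $\alpha$ and $\phi$.

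The main technical step is the chain map identity $\tilde{f}_{\{<_G\}_{G\in\mathcal{C}}}(\phi)\circ\partial=\partial\circ\tilde{f}_{\{<_G\}_{G\in\mathcal{C}}}(\phi)$. I would argue by cases on whether $\phi\circ f$ is an inclusion. If it is, then every $\phi\circ f\circ\delta_k$ is also an inclusion, and the permutation relating $<_{f\circ\delta_k}$ to $<_{\phi\circ f\circ\delta_k}$ is precisely the restriction to $\{0,\ldots,n\}\setminus\{k\}$ of the permutation relating $<_f$ to $<_{\phi\circ f}$. Combined with the reindexing present in the face map formula~\eqref{PartialFlagFaceMaps} and the alternating signs of the boundary, the chain map identity then follows by a careful parity calculation similar in spirit to the transposition analysis in the proof of Theorem~\ref{thm:SimplicalGraphHomologies}. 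If on the other hand $\phi\circ f$ fails to be an inclusion because $\phi f(a)=\phi f(b)$ for some $a<b$, then $\tilde{f}_{\{<_G\}_{G\in\mathcal{C}}}(\phi)(\alpha)=0$, and only the boundary terms indexed by $k=a$ and $k=b$ can contribute nonzero values to $\tilde{f}_{\{<_G\}_{G\in\mathcal{C}}}(\phi)(\partial\alpha)$; these two contributions yield the same $(n-1)$-simplex in $\tilde{\mathcal{F}}_{<_{G_2}}(G_2)$, and the combination $(-1)^a\text{sgn}(d^{n-1}_a\alpha,\phi)+(-1)^b\text{sgn}(d^{n-1}_b\alpha,\phi)$ vanishes because the two restricted permutations differ by a cyclic shift moving $\phi f(b)$ across the intermediate indices $a+1,\ldots,b-1$, whose sign exactly offsets the alternating boundary sign. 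This sign bookkeeping in both cases is the main obstacle and is where I expect most of the work to lie.

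Finally, preservation of the identity follows from $<_{\text{id}_G\circ f}=<_f$ which gives $\text{sgn}(\alpha,\text{id}_G)=+1$ together with $\{\alpha_{\text{id}_G(e_{i,j})}\}=\alpha$. For composition of $\phi\colon G_1\to G_2$ and $\psi\colon G_2\to G_3$, the permutation from $<_f$ to $<_{\psi\circ\phi\circ f}$ factors as the composition of the permutation from $<_f$ to $<_{\phi\circ f}$ with the permutation from $<_{\phi\circ f}$ to $<_{\psi\circ\phi\circ f}$, so multiplicativity of signs under composition yields $\text{sgn}(\alpha,\psi\circ\phi)=\text{sgn}(\alpha,\phi)\cdot\text{sgn}(\{\alpha_{\phi(e_{i,j})}\}_{0\leq i<j\leq n},\psi)$. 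Together with the obvious equality $\{\alpha_{(\psi\circ\phi)(e_{i,j})}\}=\{\alpha_{\psi(\phi(e_{i,j}))}\}$, this gives the required $\tilde{f}_{\{<_G\}_{G\in\mathcal{C}}}(\psi\circ\phi)=\tilde{f}_{\{<_G\}_{G\in\mathcal{C}}}(\psi)\circ\tilde{f}_{\{<_G\}_{G\in\mathcal{C}}}(\phi)$, completing the verification.
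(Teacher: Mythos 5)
Your proof attacks all four ingredients (object part, chain-map property, identity, composition), whereas the paper's proof of this proposition only verifies preservation of composition, doing so by an exhaustive case analysis in dimension $1$ on the form of $\alpha_e$ (loop-collapsed versus not) and on whether $\phi(e)$ or $(\varphi\circ\phi)(e)$ acquires a loop, then reducing $n\geq 2$ to the edge case. Your composition argument, factoring the permutation from $<_f$ to $<_{\psi\circ\phi\circ f}$ through $<_{\phi\circ f}$ and invoking multiplicativity of permutation signs, is a cleaner and more conceptual route to the same conclusion; the explicit formulas in the paper's case analysis are essentially a by-hand verification of exactly that factorization.

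There is, however, a concrete error in your well-definedness step. It is not true that two inclusions $f,f'\colon\Delta^n\to G_1$ inducing the same $\alpha$ must satisfy $<_f=<_{f'}$. Take $u,v\in V_{G_1}$ each carrying a loop with $u<_{G_1}v$, and suppose both directed edges between them exist; then $f(0)=u,f(1)=v$ and $f'(0)=v,f'(1)=u$ both induce $\alpha=\{\tilde{\mathcal{F}}_{<_{G_1}}(u,v)\}$, yet $0<_f 1$ while $1<_{f'}0$. Well-definedness of the sign in fact follows because $f'=f\circ\tau$ for a permutation $\tau$ of $\{0,\dots,n\}$, the permutation relating $<_{f'}$ to $<_{\phi\circ f'}$ is the $\tau$-conjugate of the one relating $<_f$ to $<_{\phi\circ f}$, and conjugation preserves sign; alternatively one can fix the paper's canonical choice of inducing simplex $\phi\circ f$ for the image and then invoke this conjugation argument to remove dependence on $f$. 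As a smaller point, in your chain-map verification the non-inclusion case should explicitly note that if $\phi\circ f$ has two or more coincident vertex pairs then every face $\phi\circ f\circ\delta_k$ also fails to be an inclusion, so both sides vanish trivially; your two-term cancellation argument tacitly assumes a unique coincidence.
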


\begin{proof}
    Let $\phi \colon G_1 \to G_2$ and $\varphi \colon G_2 \to G_3$ be quiver homomorphisms. It is sufficient to check on each each singular simplicial generator of $C_*(\tilde{\mathcal{F}}_{<_{G_1}}(G_1))$ that $\tilde{f}_{\{<_G\}_{G\in \mathcal{C}}}$ acts functorialy.
    
    For $v \in C_0(\tilde{\mathcal{F}}_{<_{G_1}}(G_1))$, we have
    \[
        \tilde{f}_{\{<_G\}_{G\in \mathcal{C}}}(\varphi \circ \phi)(v)
        =
        (\varphi \circ \phi)(v)
        =
        \left(\tilde{f}_{\{<_G\}_{G\in \mathcal{C}}}(\varphi) \circ \tilde{f}_{\{<_G\}_{G\in \mathcal{C}}}(\phi) \right)(v).
    \]
    Consider now $\{\alpha_{e}\} \in C_1(\tilde{\mathcal{F}}_{<_{G_1}}(G_1))$. 
    If $\phi(s(e))=\phi(t(e))$ or $(\varphi\circ\phi)(s(e))=(\varphi\circ\phi)(t(e))$, then $(\tilde{f}_{\{<_G\}_{G\in \mathcal{C}}}(\varphi) \circ \tilde{f}_{\{<_G\}_{G\in \mathcal{C}}}(\phi))(\{\alpha_{e}\}) = 0$ and $\tilde{f}_{\{<_G\}_{G\in \mathcal{C}}}(\varphi \circ \phi)(\{\alpha_{e}\}) = 0$ by construction.
    Otherwise, suppose first that $\alpha_e = \tilde{\mathcal{F}}_{<_{G_1}}(u,v)$ for some $u,v\in V_{G_1}$ with $u <_{G_1} v$. By loop maximality of $<_{G_1}$ and that $\phi$, $\varphi$ are homomorphisms, we have that $v$, $\phi(v)$, and $(\varphi \circ \phi)(v)$ have a loop, implying that
    \begin{equation}\label{eq:EdgeTyep1}
        \tilde{f}_{\{<_G\}_{G\in \mathcal{C}}}(\phi)\left(\{\alpha_e\}\right)
        =
        \begin{cases}
            \;\:\{\tilde{\mathcal{F}}_{<_{G_2}}(\phi(u),\phi(v))\} & \text{if} \: \phi(u) <_{G_2} \phi(v)
            \\
            -\{\tilde{\mathcal{F}}_{<_{G_2}}(\phi(v),\phi(u))\} & \text{otherwise}
        \end{cases}
    \end{equation}
    with an analogous expressions for
    $\tilde{f}_{\{<_G\}_{G\in \mathcal{C}}}(\varphi)(\{\alpha_{\phi(e)}\})$.
    From these possibilities, we can conclude that
    \begin{align*}
        \tilde{f}_{\{<_G\}_{G\in \mathcal{C}}}(\varphi \circ \phi)(\{\alpha_e\})
        & =
        \begin{cases}
            \;\;\:\{\tilde{\mathcal{F}}_{<_{G_2}}((\varphi \circ \phi)(u),(\varphi \circ \phi)(v))\} &
                \text{if}
                \: (\varphi \circ \phi)(u) <_{G_3} (\varphi \circ \phi)(v)
            \\
            -\{\tilde{\mathcal{F}}_{<_{G_2}}((\varphi \circ \phi)(v),(\varphi \circ \phi)(u))\} & \text{otherwise}
        \end{cases}
        \\
        & =
        \left(\tilde{f}_{\{<_G\}_{G\in \mathcal{C}}}(\varphi ) \circ \tilde{f}_{\{<_G\}_{G\in \mathcal{C}}}( \phi) \right)(\{\alpha_e\}).
    \end{align*}
    where in the first case above either both $\text{sgn}(\{\alpha_e\},\phi)=\text{sgn}(\tilde{f}_{\{<_G\}_{G\in \mathcal{C}}}(\phi)(\{\alpha_e\}),\varphi)=\pm 1$
    and in the second case $\text{sgn}(\{\alpha_e\},\phi)$ and $\text{sgn}(\tilde{f}_{\{<_G\}_{G\in \mathcal{C}}}( \phi)(\{\alpha_e\}),\varphi)$ have opposite signs.
    
    The remaining possibility is that $e$ has no loop and $\{\alpha_e\} = \{e\}$.
    In this situation, we have
    \begin{equation}\label{eq:EdgeTyep2}
        \tilde{f}_{\{<_G\}_{G\in \mathcal{C}}}(\phi)(\{\alpha_e\}) = 
        \begin{cases}
            \;\;\: \{\tilde{\mathcal{F}}_{<_{G_2}}(\phi(s(e_{G_1})),\phi(t(e_{G_1})))\}
            &
            \begin{aligned}
            & \text{if} \: \phi(e) \: \text{has a loop and}
            \\ &  \{\phi(s(e)) <_{G_2} \phi(t(e))\}
            \end{aligned}
            \\
            -\{\tilde{\mathcal{F}}_{<_{G_2}}(\phi(t(e)),\phi(s(e)))\}
            &
            \begin{aligned}
            & \text{if} \: \phi(e) \: \text{has a loop and}
            \\ & \: \phi(t(e)) <_{G_2} \phi(s(e))
            \end{aligned}
            \\
            \;\;\: \{\phi(e)\} & \text{otherwise} 
        \end{cases}
    \end{equation}
    with an analogous expression for $\tilde{f}_{\{<_G\}_{G\in \mathcal{C}}}(\varphi)(\{e_{G_2}\})$ where $e_{G_2} \in E_{G_2}$ is an edge with no loop.
    It is necessary to check each of the possible combinations of cases arising for $\phi$ and $\varphi$ arising from equation~\eqref{eq:EdgeTyep2}, and the $\varphi$ case of equation~\eqref{eq:EdgeTyep1}.
    
    Firstly, when $\phi(e)$ has a loop, then as $\varphi$ is a homomorphism $(\varphi\circ\phi)(e)$ must have a loop. Therefore, using the $\phi$ version of equation~\eqref{eq:EdgeTyep2} and the $\varphi$ version of equation~\eqref{eq:EdgeTyep1}, both
    $\left(\tilde{f}_{\{<_G\}_{G\in \mathcal{C}}}(\varphi ) \circ \tilde{f}_{\{<_G\}_{G\in \mathcal{C}}}( \phi) \right)(\{\alpha_e\})$ and $\tilde{f}_{\{<_G\}_{G\in \mathcal{C}}}(\varphi \circ \phi)(\{\alpha_e\})$ are equal to 
    \begin{align}\label{eq:MiexLoopCase}
        \left\{\tilde{\mathcal{F}}_{<_{G_2}}((\varphi \circ \phi)(s(e)),(\varphi \circ \phi)(t(e)))\right\}
        \; \text{or} \;-
        \left\{\tilde{\mathcal{F}}_{<_{G_2}}((\varphi \circ \phi)(t(e)),(\varphi \circ \phi)(s(e)))\right\}
    \end{align}
    depending on whether $s(\phi(e))<_{G_3}t((\varphi \circ \phi)(e))$ or $t((\varphi \circ \phi)(e))<_{G_3}s(\phi(e))$, respectively.

    Similarly, we now consider the case when $\phi(e)$ does not have a loop but $(\varphi \circ\phi)(e)$ does have a loop. In this case, using the $\phi$ and $\varphi$ versions of equation~\eqref{eq:EdgeTyep2}, the resulting expressions for both $\tilde{f}_{\{<_G\}_{G\in \mathcal{C}}}(\varphi \circ \phi)(\alpha_e)$ and $\left(\tilde{f}_{\{<_G\}_{G\in \mathcal{C}}}(\varphi ) \circ \tilde{f}_{\{<_G\}_{G\in \mathcal{C}}}( \phi) \right)(\alpha_e)$ are the same as in equation~\eqref{eq:MiexLoopCase}, again depending on whether $s((\varphi\circ\phi)(e))<_{G_3}t((\varphi\circ\phi)(e))$ or $t((\varphi\circ\phi)(e))<_{G_3}s((\varphi\circ\phi)(e))$, respectively.

    The last case to consider occurs when $(\varphi \circ\phi)(e)$ does not have a loop, which by the property that homomorphisms send loops to loops implies that $\phi(e)$ cannot have a loop. Therefore, by applying the $\phi$ and $\varphi$ versions of equation~\eqref{eq:EdgeTyep2}, we obtain
    \[
        \tilde{f}_{\{<_G\}_{G\in \mathcal{C}}}(\varphi \circ \phi)(\alpha_e)
        =
        \{ (\varphi \circ \phi)(e_{G_1}) \}
        =
        \left(\tilde{f}_{\{<_G\}_{G\in \mathcal{C}}}(\varphi ) \circ \tilde{f}_{\{<_G\}_{G\in \mathcal{C}}}( \phi) \right)(\alpha_e)
    \]
    as required.

    Finally, using the the second line of equation~\eqref{eq:PartialFlagOnmorphisms}, fuctoriality for $n\geq 2$ on basis elements $\alpha = \{\alpha_{e_{i,j}}\}_{0 \leq i < j \leq n} \in C_n(\tilde{\mathcal{F}}_{<_{G_1}}(G_1))$ reduces to the $C_1(\tilde{\mathcal{F}}_{<_{G_1}}(G_1))$ situation on each element, being the same as the combination of the $n=1$ cases of $\{\alpha_{i,j}\}$ for $0 \leq i < j \leq n$ above up to sign.
    The signs also agree in all situations
    as $\text{sgn}(\{\alpha_{e_{i,j}}\}_{0 \leq i < j \leq n},\varphi) \circ \text{sgn}(\{\alpha_{e_{i,j}}\}_{0 \leq i < j \leq n},\phi) = \text{sgn}(\{\alpha_{e_{i,j}}\}_{0 \leq i < j \leq n},\varphi \circ \phi)$ by construction.
\end{proof}

From now on we refer to the functors provided by the pervious proposition as \emph{partial directed flag functors}. We are now ready to state and prove the main result of the section.

\begin{theorem}\label{thm:PartialSimplicalGraphHomologies}
    Let $G$ be a quiver and $<$, a loop maximal total order on the vertices of $G$. Then there is an isomorphism of graded modules
    \[
        H_*^{\Delta,h}(G) = H_*(\tilde{\mathcal{F}}_{<}(G))
    \]
    induced by a chain map $g$.
    Moreover, let $\phi\colon G_1\to G_2$ be a quiver homomorphism, and $<_{1}$, $<_2$ loop maximal total orders on the vertices of $G_1$, and $G_2$, respectively. Then the following diagram commutes.
    \[
        \xymatrixcolsep{5pc}\xymatrix{
        H_*^{\Delta,h}(G_1) \ar[d]^{g_*} \ar[r]^{\phi_*} & H_*^{\Delta,h}(G_2) \ar[d]^{g_*} \\
        H_*(\tilde{F}_{<_1}(G_1)) \ar[r]^{\tilde{f}_{\{<_1,<_2\}}(\phi)_*} & H_*(\tilde{F}_{<_2}(G_2))}
    \]
    In particular, using the notation given prior to the statement, the composition of functors $H_* \circ \tilde{f}_{\{<_G\}_{G\in \mathcal{C}}}$ is independent of the choice of loop maximal total orders $\{<_G\}_{G\in \mathcal{C}}$ and the isomorphism $g_*$ is a natural transformation between functors $H_*^{\Delta,h}$ and $H_* \circ \tilde{f}_{\{<_G\}_{G\in \mathcal{C}}}$.
\end{theorem}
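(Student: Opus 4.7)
I would follow the strategy of the proof of Theorem~\ref{thm:SimplicalGraphHomologies}, constructing mutually quasi-inverse chain maps $g\colon C_*^{\Delta,h}(G)\to C_*(\tilde{\mathcal{F}}_<(G))$ and $h\colon C_*(\tilde{\mathcal{F}}_<(G))\to C_*^{\Delta,h}(G)$. Define $g$ on a singular $n$-simplex homomorphism $f\colon\Delta^n\to G$ by
\[
    g(f) = \begin{cases}
        \mathrm{sgn}(\sigma_f)\,\{\alpha_{f(i\to j)}\}_{0\leq i<j\leq n}, & \text{if } f \text{ is an inclusion,}\\
        0, & \text{otherwise,}
    \end{cases}
\]
where $\sigma_f$ is the permutation relating the natural order on $\{0,\ldots,n\}$ to the order $<_f$ of Remark~\ref{rmk:PartialCollapsTotalOrder}. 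For each $\alpha\in\tilde{\mathcal{F}}_<(G)_n$ choose an inducing inclusion $f_\alpha$ (Definition~\ref{def:InducingSingularSimplex}) and set $h(\alpha) = \mathrm{sgn}(\sigma_{f_\alpha})f_\alpha$. Verifying that both $g$ and $h$ are chain maps amounts to the same transposition-based sign analysis used in Theorem~\ref{thm:SimplicalGraphHomologies}, together with the additional check that $g(\partial f) = 0$ when $f$ is non-injective; the latter uses that faces of a non-injective homomorphism are typically non-injective themselves, and that any pair of surviving inclusion faces cancels because $\tilde{\mathcal{F}}_<(G)$ identifies multiple edges incident to a loop-bearing vertex. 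By construction $g\circ h = \mathrm{id}_{C_*(\tilde{\mathcal{F}}_<(G))}$.

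For the opposite composite $h\circ g\simeq\mathrm{id}_{C_*^{\Delta,h}(G)}$, I would apply the acyclic carrier theorem (Theorem~\ref{thm:AcyclicCarriers}) with the carrier $\varphi(f) = C_*^{\Delta,h}(G_f)$, where $G_f$ is the full subquiver of $G$ on the image vertex set $\{f(0),\ldots,f(n)\}$. Then $f$, $(h\circ g)(f)$, and $\mathrm{id}(f)$ all lie in $\varphi(f)$, and $\varphi$ is compatible with face maps since $G_{f\circ\delta_k}\subseteq G_f$. The crux, and the main technical obstacle, is establishing $H^{\Delta,h}$-acyclicity of each $\varphi(f)$. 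I would first reduce to $G = \tilde{\mathcal{R}}(G)$ via Corollary~\ref{cor:ReducedHomomorphismQuiver} to eliminate multiple edges adjacent to loops, then split on whether $f(n)$ carries a loop in $G_f$: in the affirmative case Lemma~\ref{lem:MaxLoopSimplexAcyclic} applies directly to the restriction $f\colon\Delta^n\to G_f$, while otherwise the hypotheses of Lemma~\ref{lem:ExtenededSimplexStrongLocalHContraction} can be verified using the $\tilde{\mathcal{R}}$-reduction together with the surjective homomorphism $f$ itself.

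Naturality is checked at the chain level: given $\phi\colon G_1\to G_2$ and loop maximal orders $<_1,<_2$, both $g\circ\phi_\#$ and $\tilde{f}_{\{<_1,<_2\}}(\phi)\circ g$ send an inclusion $f\colon\Delta^n\to G_1$ to zero when $\phi\circ f$ is not an inclusion, and otherwise to $\pm\{\alpha_{(\phi\circ f)(i\to j)}\}_{0\leq i<j\leq n}$ with signs produced by the same composed permutation involving $\sigma_{\phi\circ f}$, so the square commutes on chains. Since $g_*$ is an isomorphism regardless of which loop maximal order is chosen on $G$, any two families $\{<_G\},\{<'_G\}$ yield naturally isomorphic composite functors $H_*\circ\tilde{f}_{\{<_G\}}$ via $g'_{*}\circ g_*^{-1}$, proving independence of the ordering choice; $g_*$ is then itself the claimed natural transformation between $H^{\Delta,h}_*$ and $H_*\circ\tilde{f}_{\{<_G\}_{G\in\mathcal{C}}}$.
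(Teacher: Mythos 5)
Your overall strategy mirrors the paper's: construct $g$, $h$ with $g\circ h=\mathrm{id}$, then show $h\circ g\simeq\mathrm{id}$ via the acyclic carrier theorem. The sign bookkeeping for $g$ and $h$ and the naturality checks are essentially as the paper does them.

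However, there is a genuine gap in the carrier you propose. You take $\varphi(f)=C_*^{\Delta,h}(G_f)$ with $G_f$ the \emph{full} subquiver on the vertex set $\{f(0),\dots,f(n)\}$. This is not acyclic in general, even after passing to $\tilde{\mathcal{R}}(G)$. Concretely, suppose $G$ (loop-free, hence $G=\tilde{\mathcal{R}}(G)$) contains vertices $v_0,v_1,v_2$ with edges $v_0\to v_1$, $v_1\to v_2$, $v_0\to v_2$, and also $v_2\to v_0$, and let $f\colon\Delta^2\to G$ be the inclusion $f(i)=v_i$. The full subquiver on $\{v_0,v_1,v_2\}$ includes the stray edge $v_2\to v_0$. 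The directed flag complex of this full subquiver has one $2$-simplex (from $v_0\to v_1\to v_2$) and the filled triangle leaves the double edge $v_0\leftrightarrow v_2$ as a circle, so $H_1^{\Delta,h}(G_f)=H_1^{\Delta,i}(G_f)\cong R\neq 0$. Furthermore, the hypotheses of Lemma~\ref{lem:ExtenededSimplexStrongLocalHContraction} fail for the full subquiver on two counts: the double edge $v_0\leftrightarrow v_2$ has no loop, and the stray edge $v_2\to v_0$ lies in the image of no homomorphism $\Delta^2\to G_f$ that is surjective on vertices. The paper's proof avoids this by defining $G_f$ (for $f$ an inclusion) to have only the edges $f'(i\to j)$ ranging over inclusions $f'$ inducing $s_<^f$, which excludes such stray edges and is exactly what makes Lemma~\ref{lem:ExtenededSimplexStrongLocalHContraction} applicable. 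You should replace the full subquiver by this restricted edge set; without that modification the acyclic carrier theorem is not available and the argument does not go through.

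A secondary remark: your case split on whether $f(n)$ has a loop (invoking Lemma~\ref{lem:MaxLoopSimplexAcyclic} versus Lemma~\ref{lem:ExtenededSimplexStrongLocalHContraction}) is unnecessary once $G_f$ is defined correctly, since Lemma~\ref{lem:ExtenededSimplexStrongLocalHContraction} internally handles both possibilities. The remainder of your naturality argument and the deduction of order-independence are sound and match the paper's reasoning.
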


\begin{proof}
    Corollary~\ref{cor:ReducedHomomorphismQuiver} states that $H_*^{\Delta,h}(G) = H_*^{\Delta,h}(\tilde{\mathcal{R}}(G))$ naturally with respect to induced maps.
    Therefore, we assume that $G=\tilde{\mathcal{R}}(G)$ from now on.
    This implies that all directed edges between vertices $u,v\in V_G$ that have a loop are unique. 
    
    To prove the first part of the statement, we will construct chain maps
    \[
        g\colon C^{\Delta,h}_*(G) \to C_*(\tilde{\mathcal{F}}_{<}(G))
        \;\;\; \text{and} \;\;\;
        h \colon C_*(\tilde{\mathcal{F}}_{<}(G)) \to C^{\Delta,h}_*(G)
    \]
    that induce a chain homotopy equivalence between chain complexes $C^{\Delta,h}_*(G)$ and $C_*(\tilde{\mathcal{F}}_{<}(G))$.

    Let $f \colon \Delta^n \to G$ be a singular $n$-simplex inclusion.
    Then define $\sigma_{<}^f\in \Sigma(n+1)$ to be the permutation that sends the usual total order on vertices $0,\dots,n$ to the total order $<_f$ given in Remark~\ref{rmk:PartialCollapsTotalOrder}.
    More precisely, we construct the permutation $\sigma_{<}^f$ as follows.
    
    Take the longest sequence $0 \leq a^f_0 < \cdots < a^f_m \leq n$ such that for each $j=0,\dots,m$ vertex $f(a^f_j)$ has a loop.
    Similarly, we also obtain $0 \leq b^f_1 < \cdots < b^f_{n-m} \leq n$ such that for each $k=1,\dots,n-m$ vertex $f(b_k)$ does not have a loop, satisfying $\{ b^f_1,\dots,b^f_{n-m} \} \cup \{ a^f_0,\dots,a^f_n \} = \{ 0,\dots,n \}$.
    Let $\{i_0,\dots,i_m\}=\{a^f_1,\dots,a^f_m\}$ be such that $f(a^f_{i_1}) < \cdots < f(a^f_{i_m})$.
    Then,
    \begin{equation}\label{eq:VertexSequence}
        \sigma_{<}^f(t) = 
        \begin{cases}
            a^f_{i_{t-n+m}} &
            \text{if} \:
            t \leq n-m \\
            b^f_{t+1} & \text{otherwise}.
        \end{cases}
    \end{equation}
    for $t=0,\dots,n$.
    
    We also define
    $s_{<}^f \in \tilde{\mathcal{F}}_{<}(G)$ given by
    \[
        s_{<}^f = f(0) \in \tilde{\mathcal{F}}_{<}(G)_0
        \;\;\; \text{when} \;\;\; n=0 \;\;\; \text{and} \;\;\;
         s_{<}^f = \{ \alpha_{f(i \to j)} \}_{0 \leq i < j \leq n}
         \;\;\ \text{when} \;\;\; n \geq 1.
    \]
    
    Now we may define $g\colon C^{\Delta,h}_*(G) \to C_*(\tilde{\mathcal{F}}_<(G))$ by linearly extending
    \begin{align}\label{eq:PartialQuotientChainMap}
        g(f) = 
        \begin{cases}
            \text{sgn}(\sigma_{<}^f) s_{<}^f
            &
            \text{if} \: f \: \text{is an inclusion}
            \\
            0
            &
            \text{otherwise}
        \end{cases}
    \end{align}
    and show that it is a chain map a follows.

    Suppose first that $f\colon \Delta^n \to G$ is a singular simplex homomorphism that is not an inclusion, the exists $0\leq i < j \leq n$ such that $f(i)$ = $f(j)$.
    In which case, for each $k=0,\dots,n$ and $k \neq i,j$, we have
    \[
        g(f) = 0 \;\;\; \text{and} \;\;\; g(f \circ \delta_k) = 0
    \]
    as $f \circ \delta_k$ is not a singular simplex inclusion.
    Meanwhile, if $f \circ \delta_i$ is not an inclusion, then $f \circ \delta_j$ is also not an inclusion and 
    $g(f \circ \delta_i) = g(f \circ \delta_j) = 0$.
    In any other case, we have
    \[
        g(f \circ \delta_i) = (-1)^i\text{sgn}(\sigma_<^{f\circ \delta_i}) s_<^{f\circ \delta_i}
        \;\;\; \text{and} \;\;\;
        g(f \circ \delta_j) = (-1)^j\text{sgn}(\sigma_<^{f\circ \delta_j}) s_<^{f\circ \delta_j}.
    \]
    We now show that the two expressions above always differ by a sign.
    
    As $f(i) = f(j)$ and $f$ is homomorphism $f(i \to j)$ is a loop at $f(i)$. The only edges in the image of $f \circ \delta_i$ that may not be in the image of $f \circ \delta_j$ are those of the form $f(k \to i)$ of $f(i \to k')$ for integers $0 \leq k < i < k' \leq n$ and $k,k' \neq j$. Similarly, the only edges in the image of $f \circ \delta_j$  that may not be in the image of $f \circ \delta_i$ are those of the form $f(k \to j)$ of $f(j \to k')$ for integers $0 \leq k < j < k' \leq n$ and $k,k' \neq i$. However, as $f(i)=f(j)$ has a loop $\alpha_{f(k \to i)} = \alpha_{f(k \to j)}$, $\alpha_{f(i \to k')} = \alpha_{f(k' \to j)}$, and $\alpha_{f(i \to k'')} = \alpha_{f(j \to k'')}$ for any integers $0 \leq k < i < k' < j \leq k'' \leq n$. Therefore, as $s_<^{f\circ \delta_i}$ and $s_<^{f\circ \delta_i}$ have the same edges, $s_<^{f\circ \delta_i} = s_<^{f\circ \delta_j}$ by the construction of simplices in $\tilde{\mathcal{F}}_<(G)$.
    
    The permutation $\sigma_{i,j}$ sending $0,\dots,n$ to $0,\dots,i-1,i+1,\dots,j,i,j+1\dots,n$ has sign $(-1)^{j-i-1}$. Since $\sigma_<^{f\circ \delta_i} = \sigma_<^{f\circ \delta_j} \circ \sigma_{i,j}$, we obtain that 
    \[
        \text{sgn}(\sigma_<^{f\circ \delta_i}) = (-1)^{j-i-1}\text{sgn}(\sigma_<^{f\circ \delta_j}).
    \]
    Therefore, $g(f \circ \delta_i) = - g(f \circ \delta_j)$.
    
    Putting together everything above, if $f\colon \Delta^n \to G$ is a singular simplex homomorphism that is not an inclusion, then
    \[
        (g \circ \partial_n)(f) = (\partial_n \circ g)(f) = 0.
    \]
    Otherwise, $f$ is an inclusion and as each face $f \circ \delta_k$ is a distinct inclusion for $k=0,\dots,n$. Hence using the notation of equation~\eqref{PartialFlagFaceMaps}, we obtain that
    \[
        (g \circ \partial_n)(f) = \sum_{k=0}^n \alpha_i \{\alpha_{f({\phi_k(i) \to \phi_k(j)})}\}_{0\leq i < j \leq n-1}
        \;\;\; \text{and} \;\;\;
        (\partial_n \circ g)(f) = \sum_{k=0}^n \beta_i \{\alpha_{f({\phi_k(i) \to \phi_k(j)})}\}_{0\leq i < j \leq n-1}
    \]
    for some $\alpha_0,\dots,\alpha_n,\beta_0,\dots,\beta_n \in \{1,-1\}$.
    Therefore, to show that $g$ is a chain map it sufficient to check that $\alpha_i = \beta_i$ for $i=0,\dots,n$.
    To achieve this
    the same argument used for the chain map $g$ in the proof of Theorem~\ref{thm:SimplicalGraphHomologies} can be applied.
    Where in fact it is sufficient to only check transpositions on a pair of elements from $a^f_{i_1},\dots,a^f_{i_m}$, as these are the only vertices whose order changes with respect to $<_f$.

    For each $s \in \tilde{\mathcal{F}}_<(G)_n$, let $f_s \colon \Delta^n \to G$ be a choice of singular simplex inclusion inducing $s$ in the sense of Definition~\ref{def:InducingSingularSimplex}. Then define $h \colon C_*(\mathcal{F}(G)) \to C^{\Delta,m}_*(G)$ by linearly extending
    \begin{align*}
        h(s) = \text{sgn}(\sigma_{<}^f) f_s
    \end{align*}
    which is a chain map for the same reasons as $g$ in the case of singular simplex inclusions.
    
    By construction, $g \circ h = \text{id}_{C_*(\tilde{\mathcal{F}(}G))}$.
    Therefore, it remains to show that $h\circ g$ is chain homotopic to the identity on $C^{\Delta,h}_*(G)$.

    Given singular simplex homomorphism $f\colon \Delta^n \to G$, define the subquiver $G_f$ of $G$ be have vertices
    \[
        V_{G_f} =
        \{ f(i) \: | \: i = 0,\dots,n \},
    \]
    edges
    \[
        E_{G_f} = \{ f'( i \to j) \: | \: f' \colon \Delta^n \to G \; \text{is a singular simplex inclusion inducing} \: s^f_< \: \text{and} \: 0 \leq i < j \leq n \}
    \]
    when $f$ is an inclusion, and
    \[
        E_{G_f} = \{ f( i \to j) \: | \: 0 \leq i < j \leq n \}
    \]
    otherwise.
    Since the total order $<_f$ restricted to the vertices $b_1,\dots,b_{n-m}$ in the image of $f$ that have no loop remains fixed regardless of the choice of singular simplex inclusion $f'$ inducing $s^f_<$, we obtain the following two facts.
    \begin{enumerate}[(1)]
        \item 
        As we assume $G = \tilde{\mathcal{R}}(G)$ has no multiple edges with a loop, the quiver $G_f$ contains no multiple edges.
        \item
        Any double edges in $G_f$ have a loop.
    \end{enumerate}
    Therefore, $G_f$ satisfies the conditions of Lemma~\ref{lem:ExtenededSimplexStrongLocalHContraction}, implying that $C^{\Delta,h}_*(G_{f})$ is acyclic.
    
    To each singular simplex homomorphism $f\colon \Delta^n \to G$ associate the sub-chain complex $C^{\Delta,h}_*(G_{f})$, to provide us with an acyclic carrier $\varphi$ on the basis of singular simplices in $C^{\Delta,h}_*(G)$.
    Moreover, both $h\circ g$ and $\text{id}_{C^{\Delta,h}_*(G)}$ are carried by $\varphi$.
    Therefore, $h\circ g$ and $\text{id}_{C^{\Delta,h}_*(G)}$ are chain homotopic by the acyclic carrier theorem (Theorem~\ref{thm:AcyclicCarriers}).
    
    It remains to check that naturality conditions in the statement of the theorem.
    To this end, let $f\colon \Delta^n \to G$ be a singular simplex homomorphism and let $\phi \colon G_1 \to G_2$ be a digraph homomorphism. Using the functionality of $H^{\Delta,h}_*(G)$ from proposition~\ref{prop:InitialFuctoriality}, the map $\phi$ induces chain map $\phi_{\#}\colon C^{\Delta,h}_*(G_1) \to C^{\Delta,h}_*(G_2)$ by linearly extending $\phi_\#(f)=f\circ \phi$. Similarly, we also have the chain map $\tilde{f}_{\{<_1,<_2\}}(\phi)$ defined in equation~\eqref{eq:PartialFlagOnmorphisms}.

    Consider the diagram of chain maps
    \begin{equation}\label{eq:ChainSquare}
        \xymatrixcolsep{5pc}\xymatrix{
        C_*^{\Delta,h}(G_1) \ar[d]^{g} \ar[r]^{\phi_\#} & C_*^{\Delta,h}(G_2) \ar[d]^{g} \\
        C_*(\tilde{F}_{<_1}(G_1)) \ar[r]^{\tilde{f}_{\{<_1,<_2\}}(\phi)} & C_*(\tilde{F}_{<_2}(G_2))}
    \end{equation}
    where $g$ is the chain map defined in equation~\eqref{eq:PartialQuotientChainMap}.
    
    Similarly to the proof of fuctoriality of the partial directed flag functor in Proposition~\ref{prop:PartialFlagFunctor}, the commutativity of the diagram above must be checked on all possible forms of the image of singular simplex homomorphism $f\colon \Delta^n \to G$ under the chain map $g\colon C_*^{\Delta,h}(G_1) \to C_*(\tilde{F}_{<_1}(G_1))$.

    When $f$ is not an inclusion, then $\phi \circ f$ is also not an inclusion and 
    \[
        (\tilde{f}_{\{<_1,<_2\}}(\phi) \circ g)(f)
        =
        (g \circ \phi_\#)(f) = 0.
    \]
    Hence, assume now that $f$ is an inclusion. Then when $n=0$, using the first part of equation~\eqref{eq:PartialFlagOnmorphisms}, we have that 
    \[
        (\tilde{f}_{\{<_1,<_2\}}(\phi) \circ g)(f)
        =
        \phi(f(0))
        =
        (g \circ \phi_\#)(f).
    \]
    We now consider all possibilities when $n = 1$. In this case $f(0)$ and $f(1)$ are distinct as $f$ is an inclusion.
    
    Firstly, when either $f(0)$ or $f(1)$ has a loop, then either $(\phi \circ f)(0)$ or $(\phi \circ f)(1)$ has a loop. Hence,
    \[
        g(f) =
        \begin{cases}
            \;\;\:\{ \tilde{\mathcal{F}}_{<_1}(f(0),f(1)) \} & \text{if} \: f(0) <_1 f(1)
            \\
            - \{\tilde{\mathcal{F}}_{<_1}(f(1),f(0))\} & \text{otherwise}
        \end{cases}
    \]
    and
    \[
        (g\circ \phi_\#)(f) =
        \begin{cases}
            \;\;\:\{ \tilde{\mathcal{F}}_{<_2}((\phi \circ f)(0),(\phi \circ f)(1)) \} & \text{if} \: (\phi \circ f)(0) <_2 (\phi \circ f)(1)
            \\
            -\{\tilde{\mathcal{F}}_{<_2}((\phi \circ f)(1),(\phi \circ f)(0))\} & \text{otherwise}.
        \end{cases}
    \]
    It now follows directly from the construction of $\tilde{f}_{\{<_1,<_2\}}(\phi)$, that $(\tilde{f}_{\{<_1,<_2\}}(\phi \circ g))(f)$ agrees with the second equation above
    by equations~\eqref{eq:PartialFlagOnmorphisms}

    Secondly, when neither $f(0)$ or $f(1)$ have a loop and neither $(\phi \circ f)(0)$ or $(\phi \circ f)(1)$ have a loop, then
    \[
        (g \circ \phi_\#)(f) = \{(e_{(\phi \circ f)(0\to 1)}\} = (\tilde{f}_{\{<_1,<_2\}}(\phi \circ g))(f).
    \]
    Finally, when neither $f(0)$ or $f(1)$ have a loop and either $(\phi \circ f)(0)$ or $(\phi \circ f)(1)$ has a loop, then
    \begin{align*}
        (g \circ \phi_\#)(f)
        & =
        \begin{cases}
            \;\;\: \{ \tilde{\mathcal{F}}_{<_2}(\phi(f(0)),\phi(f(1))) \}
            &  \phi(f(0)) <_2 \phi(f(1))
            \\
            - \{ \tilde{\mathcal{F}}_{<_2}(\phi(f(1)),\phi(f(0))) \}
            & \: \text{otherwise}.
        \end{cases}
        \\ & =
        (\tilde{f}_{\{<_1,<_2\}}(\phi \circ g))(f)
    \end{align*}
    
    For $n\geq 1$,
    using the same reasoning as the final paragraph of the proof of Proposition~\ref{prop:PartialFlagFunctor}, commutativity of diagram~\eqref{eq:ChainSquare} on generators follows from the $n=1$ case.
    
    Applying the homology functor to diagram~\eqref{eq:ChainSquare}, we induce the commutative diagram on homology in the statement of the theorem. Using the commutativity of the diagram in homology and Proposition~\ref{prop:PartialFlagFunctor}, the fuctoriality of the partial flag functors extends $g_*$ to a natural transformation between functors $H_*^{\Delta,h}$ and $H_* \circ \tilde{f}_{\{<_G\}_{G\in \mathcal{C}}}$. In addition, invariance of partial flag functors with respect to the choices of loop maximal total orders follows also from the commutative diagram in homology, as any choice of such an order commutes up to isomorphism $g_*$ with homomorphism $\phi_*$. Therefore, any choices of loop maximal total order must result in the same homomorphisms of graded modules between any pair of quivers, as required.
\end{proof}

To end the section, we apply Theorem~\ref{thm:PartialSimplicalGraphHomologies} to demonstrate that the $H_*^{\Delta,h}$ is invariant with respect to the weak local strong $h$-homotopy of quivers given in Definition~\ref{def:QuiverLocalStrongHomotopy}, differing from local strong $h$-homotopy by additionally allowing loop contractions in the sense of Definition~\ref{def:WeakLocalStrongHHomotopy}.

\begin{theorem}\label{thm:LocalStrongHInv}
    The functor $H^{h,\Delta}_*$ is invariant under weak local strong $h$-homotopy equivalence of quivers.
\end{theorem}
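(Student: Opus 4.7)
The weak local strong $h$-homotopy relation is by Definition~\ref{def:QuiverLocalStrongHomotopy} the equivalence relation generated by local strong $h$-homotopy equivalences and loop contractions. Since isomorphism of graded modules is itself an equivalence relation, the plan is to verify invariance of $H_*^{\Delta,h}$ under each generator separately. For local strong $h$-homotopy equivalences this is Theorem~\ref{thm:LocalStrongHInvIntial}, so the task reduces to proving that if $G'$ is obtained from $G$ by contracting a single degenerate loop $l$ at a vertex $v$, then $H_*^{\Delta,h}(G) \cong H_*^{\Delta,h}(G')$; the general loop-contraction case follows by iterating over the finitely many removed loops.

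Using Corollary~\ref{cor:ReducedHomomorphismQuiver}, I would first reduce to the partially reduced setting, which is harmless for the edges at $v$ because the degeneracy hypothesis on $l$ forbids multiple edges at $v$ in the first place. Then, by Theorem~\ref{thm:PartialSimplicalGraphHomologies}, the goal becomes to exhibit an isomorphism $H_*(\tilde{\mathcal{F}}_<(G)) \cong H_*(\tilde{\mathcal{F}}_{<'}(G'))$ for compatible loop-maximal total orders $<$ on $V_G$ and $<'$ on $V_{G'}$. I would fix these orders to agree on $V_G \setminus \{v\}$, placing $v$ as the minimum among loop-vertices in $<$ and as the maximum among non-loop vertices in $<'$, so that the two orders differ only in where $v$ is inserted between the non-loop and loop portions of the ambient vertex set.

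The key structural observation is that every singular simplex inclusion $f \colon \Delta^n \to G$ is automatically an inclusion $\Delta^n \to G'$, since injectivity on vertices prevents $f$ from using $l$; hence the sets indexing the simplices of $\tilde{\mathcal{F}}_<(G)$ and $\tilde{\mathcal{F}}_{<'}(G')$ are in natural bijection via $f \leftrightarrow f$. Corresponding simplices differ only in pairs involving $v$ together with a non-loop neighbor $u$: in $G$ such a pair contributes a type-$E^2$ edge oriented by $<$, while in $G'$ it contributes a type-$E^1$ edge oriented by the unique underlying edge between $u$ and $v$, uniqueness following from the degeneracy condition. I would define a chain map $\Psi \colon C_*(\tilde{\mathcal{F}}_<(G)) \to C_*(\tilde{\mathcal{F}}_{<'}(G'))$ on each generator $\sigma$ by the assignment $\sigma \mapsto \mathrm{sgn}(\tau_\sigma)\, \sigma'$, where $\tau_\sigma$ is the permutation relating the induced total orders on indices from Remark~\ref{rmk:PartialCollapsTotalOrder} computed with respect to $<$ and $<'$ on a chosen inducing inclusion, and $\sigma'$ is the simplex associated with the same inducing inclusion in $\tilde{\mathcal{F}}_{<'}(G')$.

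The main technical step, and the expected obstacle, is verifying that $\Psi$ is a chain map. The interplay between the sign $\mathrm{sgn}(\tau_\sigma)$ and the face operators must be controlled: a careful case analysis over whether a removed index corresponds to $v$, to a loop-neighbor of $v$, or to a non-loop-neighbor of $v$ is required, and the degeneracy hypothesis on $l$ (no multiple edges at $v$, and each non-loop neighbor forming at most a single edge with $v$) is used at each step to reconcile the $E^2$ and $E^1$ face-map conventions with the sign contributions, in a manner analogous to the sign-tracking appearing in the construction of $g$ in the proof of Theorem~\ref{thm:PartialSimplicalGraphHomologies}. Once $\Psi$ is established as a chain isomorphism, combining with Theorem~\ref{thm:PartialSimplicalGraphHomologies} yields $H_*^{\Delta,h}(G) \cong H_*^{\Delta,h}(G')$ and completes the argument.
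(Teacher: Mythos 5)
Your proposal follows the same overall route as the paper's proof: reduce to the removal of a single degenerate loop, invoke Theorem~\ref{thm:PartialSimplicalGraphHomologies} to pass to the partial directed flag chain complexes, and exhibit an isomorphism of those chain complexes built from the natural bijection $f \leftrightarrow f$ on singular inclusions (which exists because an inclusion $\Delta^n \to G$ never uses the loop $l$). So at the level of strategy you are reproducing the paper's argument.

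The substantive difference is the sign $\mathrm{sgn}(\tau_\sigma)$ and the care you take with the total orders, and it is worth flagging that this is not cosmetic: the paper's proof defines $g(s_1)=s_2$ with no sign and simply asserts that $g$ and $h$ are chain maps. As you anticipate, this is not quite right. When $l$ is removed, $v=v_l$ switches from being ordered among the loop vertices (via $<$) to being ordered among the non-loop vertices (via the natural $\Delta^n$ order), so the intrinsic order $<_f$ from Remark~\ref{rmk:PartialCollapsTotalOrder} used to index a simplex and define its face maps can change. Concretely, if $f(0),f(1),f(2),f(3)=a,v,b,c$ with $a,b$ loopless and $c$ looped, then $<_f$ gives the vertex sequence $a,b,v,c$ in $G_1$ but $a,v,b,c$ in $G_2$, so $d_1$ removes $b$ on one side and $v$ on the other, and the unsigned $g$ fails $g\circ d_k=d_k\circ g$ and hence $g\circ\partial=\partial\circ g$. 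Twisting by the permutation sign relating $<_f$ and $<'_f$ (and choosing $<$ so that $v$ is minimal among loop vertices, so that $<$ remains loop maximal on $G_2$ — the paper asserts this restriction without making the choice explicit) fixes this, exactly as in the proof of Theorem~\ref{thm:SimplicalGraphHomologies} and in the definition of $g$ in the proof of Theorem~\ref{thm:PartialSimplicalGraphHomologies}. You do not carry out the resulting face-by-face verification, but you identify it as the crux and point at the correct analogue; that check is routine and your outline would complete correctly. In short: same approach as the paper, but your account repairs a real gap in the paper's version of the argument.
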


\begin{proof}
    Suppose that quivers $G_1$ and $G_2$ are weak local strong $h$-homotopy equivalent. By construction of weak local strong $h$-homotopy, it is sufficient to prove the statement of the theorem in the case $G_1 \simeq^{wlSh}_1 G_2$.
    
    Using Theorem~\ref{thm:LocalStrongHInvIntial}, we have that $H^{h,\Delta}_*(G_1) = H^{h,\Delta}_*(G_2)$ when the $1$-step homotopy arises from  $\phi \colon G_1 \to G_2$, $\varphi \colon G_2 \to G_1$ such that $\varphi \circ \phi \simeq_1^{lSh} \text{id}_{G_1}$, $\phi \circ \varphi \simeq_1^{lSh} \text{id}_{G_2}$.
    Therefore, without loss of generality it is sufficient to show that $H^{h,\Delta}_*(G_1) = H^{h,\Delta}_*(G_2)$ when $G_2$ is a loop contraction of $G_1$.
    Moreover, we need only consider the case when a single degenerate loop $l$ at some $v_l \in V_G$ is removed, since the same conditions apply equally to all degenerate loops simultaneously.

    Let $<$ be loop maximal total order on $G_1$. Then $<$ is also a loop maximal total order on $G_2$.
    We will construct chain maps $g \colon C_*(\tilde{\mathcal{F}}_{<}(G_1))\to C_*(\tilde{\mathcal{F}}_{<}(G_2))$ and $h \colon C_*(\tilde{\mathcal{F}}_{<}(G_2))\to C_*(\tilde{\mathcal{F}}_{<}(G_1))$ satisfying $h \circ g = \text{id}_{C_*(\tilde{\mathcal{F}}_{<}(G_1))}$ and $g\circ h = \text{id}_{C_*(\tilde{\mathcal{F}}_{<}(G_2))}$. This implies that $C_*(\tilde{\mathcal{F}}_{<}(G_2))$ and $C_*(\tilde{\mathcal{F}}_{<}(G_2))$ are chain homotopic.
    Therefore, by Theorem~\ref{thm:PartialSimplicalGraphHomologies}, we obtain that
    \[
        H_*^{\Delta,h}(G_1)
        =
        H_*(\tilde{\mathcal{F}}_{<}(G_1))
        =
        H_*(\tilde{\mathcal{F}}_{<}(G_2))
        =
        H_*^{\Delta,h}(G_2)
    \]
    as desired.

    Define $g \colon C_*(\tilde{\mathcal{F}}_{<}(G_1))\to C_*(\tilde{\mathcal{F}}_{<}(G_2))$ and $h \colon C_*(\tilde{\mathcal{F}}_{<}(G_))\to C_*(\tilde{\mathcal{F}}_{<}(G_1))$ on vertices by linearly extending the identity map on vertices and on $n$-simplices for $n\geq 1$ as follows.
    
    Given $s_1 \in \tilde{F}(G_1)_n$ denote by $f_{s_1}$ a choice singular $n$-simplex inclusion in $G_1$ inducing $s_1$. Since $f_{s_1}$ is an inclusion it contains no loops in is image. Hence, $f_{s_1}$ is also a singular simplex inclusion with image in $G_2$.
    As any singular simplex inclusion induces some $n$-simplex in $\tilde{F}(G_1)_2$, $f_{s_1}$ induces simplex  $s_2 \in \tilde{\mathcal{F}}(G_2)_n$. 
    Crucially, since $l$ is a degenerate loop, being adjacent to no multiple edges and any double edges it is adjacent to having a loop in $G_2$, any choice of $f_{s_1}$ provides the same simplex $s_2$. Therefore, the simplex $s_2$ is uniquely determined and we construct $g$ by linearly extending $g(s_1) = s_2$. The construction of $h$ on $n$-simplices for $n\geq 1$ is achieved similarly with the roles of $\tilde{F}(G_1)_n$ and $ \tilde{F}(G_2)_n$ reversed.
    
    It follows from the uniqueness of the construction of $n$-simplices above that $(g\circ h)(s_1) = s_1$ for any $s_1 \in  \tilde{F}(G_1)_1$ and $(g\circ h)(s_2) = s_2$ for any  $s_2 \in  \tilde{F}(G_1)_2$.
    Therefore, $g$ and $h$ are chain maps satisfying $h \circ g = \text{id}_{C_*(\tilde{\mathcal{F}}_{<}(G_1))}$ and $g\circ h = \text{id}_{C_*(\tilde{\mathcal{F}}_{<}(G_2))}$, as required.
\end{proof}

\bibliographystyle{amsplain}
\bibliography{References}

\appendix

\section{Algorithmic implementations}\label{sec:Algorithms}

In this appendix we detail algorithms that obtain from a filtered quiver a filtered $\Delta$-set corresponding to the images of the directed flag, reduced directed flag, and partial directed flag complexes. To investigate the efficiency of the algorithms we consider the computational complexity of each procedure. Through this analysis, we demonstrate that dependence on the number of $n$-simplices in any give dimension makes computation of the reduced directed flag and partial reduced directed flag complexes significantly more efficient than direct calculation of generators of the quasi-isomorphic chain complexes $C^{\Delta,m}_*(G)$ and $C^{\Delta,h}_*(G)$, respectively. A working python demonstration of all algorithms detailed is available at \cite{Burfitt2025}.

Each proposed algorithm is structured with a main loops over the vertices $v\in V_G$. Therefore, all procedures can be parallel processed by assigning computations over each vertex $v\in V_G$ to separate processing units. Moreover, the algorithms are designed to remain memory efficient, requiring only information on certain $n$-simplices and vertices of $G$ at any given step. Once the filtered $\Delta$-sets are obtained, the results can be combined with \cite{Smith2022} or \cite{Lutgehetmann2021} to compute persistent homology. Providing efficient computation of persistent $H^{\Delta,i}_n$, $H^{\Delta,m}_n$, and $H^{\Delta,h}_n$ on filtered quivers.

For the computational purposes of this appendix, an abstract simplicial complexes comes equipped with a total order on its vertices. In addition, quivers, simplicial complexes, and $\Delta$-sets are assumed to be finite. Throughout the appendix all indices begin at $0$, and as throughout this work $G$ is a quiver and $n$ a non-negative integer, unless stated otherwise.

\subsection{Filtered objects}

For any given quiver homology, additional information about the structure of the quiver with respect to an associated filtration can be extracted using persistent homology, the construction of which we detail in Appendix~\ref{PersistentHomology}.

\begin{definition}\label{def:FilteredQuiver}
    A \emph{filtered quiver} is a quiver $G$ together with real valued functions $f_V\colon V_G \to \mathbb{R} \cup \{ -\infty,\infty \}$ and $f_E\colon E_G \to \mathbb{R} \cup \{ -\infty,\infty \}$ such that $f_E(e)\geq f_V(s(e))$ and $f_E(e)\geq f_V(t(e))$ for each $e \in E_G$. 
\end{definition}

Given $t \in \mathbb{R} \cup \{\infty \}$, the \emph{sublevel quivers} $G^t$ of a filtered quiver $G$ are given by $V_{G^t}=f_V^{-1}([-\infty,t))$ and $E_{G^t} = f_E^{-1}([-\infty,t))$. It is the changes in the homology of the increasing family of sublevel quivers that is measured by persistent homology.

\begin{definition}\label{def:FilteredSpace}
    A \emph{filtered simplicial complex} is an abstract simplicial complex $(S,V)$ together with a real valued function $f\colon S \to \mathbb{R} \cup \{ -\infty,\infty \}$ such that for any $s,s' \in S$ with $s \subseteq s'$ we have $f(s) \leq f(s')$.
    
    A \emph{filtered $\Delta$-set} is a $\Delta$-set $X$ together with real valued functions $f_n\colon X_n \to \mathbb{R} \cup \{ -\infty,\infty \}$ such that $f_{n+1}(x_{n+1})\geq f_{n}(d_i^n(x_{n+1}))$ for each integer $n \geq 0$, $i=0,\dots,n+1$, and $x_{n+1} \in X_{n+1}$. 
\end{definition}

The \emph{sublevel simplicial complexes} $(V^t,S^t)$ of a filtered simplicial complex are given by $S^t=f^{-1}([-\infty,t))$ and $V^t = \{ v \in V \: | \: \{ v \} \in S^t \}$ for each $t \in \mathbb{R} \cup \{\infty \}$. The \emph{sublevel $\Delta$-sets} $X^t$ of a filtered $\Delta$-set $X$ are given by $X_n^t=f_n^{-1}([-\infty,t))$ for each $t \in \mathbb{R} \cup \{\infty \}$ and integer $n\geq 0$.

As all quivers, simplicial complexes, and $\Delta$-sets considered in this section are finite, there are only finitely many values of $t$ for which the sublevel quivers, simplicial complexes, or $\Delta$-sets of any such filtered object are distinct.

\subsection{Data structures}\label{sec:DataStructures}

Before detailing the quiver homology algorithms, we first set out the data structures used to store quivers, filtered quivers, simplices, filtered simplices, simplicial complexes, filtered simplicial complexes, $\Delta$-sets, and filtered $\Delta$-sets. Quivers were defined in Section~\ref{sec:Quivers}, and simplicial complexes and $\Delta$-sets in Section~\ref{sec:Spaces} earlier in this work.

For the purposes of setting out the flag complex $\mathcal{F}$ and partially reduced flag complex $\tilde{\mathcal{F}}$ procedures, we additionally require a notion of $k$-partial $(n+1)$-dimensional simplices within a $\Delta$-set for $k=0,\dots,n+1$. In particular, a $0$-partial $(n+1)$-simplex in $X$ is an $n$-simplex of $X$ and the boundary of an $(n+1)$-simplex provides an $(n+1)$-partial $(n+1)$-simplex in $X$. Formally, a $k$-partial $n$-simplex in $X$ consists of an sequential collection of $n$-simplices subject to a subset of the face conditions from equation~\eqref{eq:FaceMapConditions} and is defined precisely as follows.

\begin{definition}\label{def:PartialSimplex}
Let $X$ be a $\Delta$-set, $n\geq 0$ and $k=0,\dots,n+1$. Then a \emph{$k$-partial $(n+1)$-simplex} in $X$ consists of $n$-simplices $s_{n+1-k},\dots,s_{n+1} \in X_{n}$ satisfying
    \[
        d_{i}^{n-1} ( s_j ) = d_{j-1}^{n-1} ( s_i )
    \]
    for each $i,j=n+1-k,\dots,n+1$ with $i < j$.
\end{definition}

During the of computation of the persistent homology of $\tilde{\mathcal{F}}(G)$, we additionally require cell complexes. A \emph{cell complex} generalises the construction of a $\Delta$-set, consisting of a sequence of sets of $n$-cells $C_n$ whose boundary consists of a $\mathbb{Z}$-linear combination of lower dimensional cells.

The data structures assigned to each object appearing in this appendix are as follows.
In particular, the computational structure for simplicial complexes, filtered simplicial complexes, $\Delta$-sets, and filtered $\Delta$-sets coincide with the inputs used in \cite{Smith2022} and \cite{Lutgehetmann2021}.

\begin{enumerate}[(1)]
    \item
    Let $(V,S)$ be an abstract simplicial complex. Then for algorithmic purposes, $(V,S)$ consists of a finite totally ordered set of vertices and a finite totally ordered set $S$ of simplices consisting of subsets of vertices $V$ closed under taking subsets.
    \begin{itemize}
        \item[--]
        In practice, $S$ is stored as a list of lists, where each list at any given index $n$ contains lists of length $n+1$. Each length $n+1$ list being a list of the ordered vertices of an $n$-simplex in $(V,S)$. Therefore, $V$ is stored as the index $0$ components of $S$. However, we note that it will only be strictly necessary to hold consecutive sets of $n$ and $(n+1)$-simplices in memory for each computational step of the algorithms set out in this section.  
    \end{itemize}
    \item
    A filtered simplicial complex consists of the structure of a simplicial complex provided above with each simplex additionally being assigned a filtration value in $\mathbb{R} \cup \{ -\infty,\infty\}$ satisfying the conditions of Definition~\ref{def:FilteredSpace}. In practice, the filtration values of the filtered simplicial complex are stored as an additional final entry in each simplex list.
    \item
    Let $X$ be a $\Delta$-set. Then for algorithmic purposes, $X$ consists of a finite totally ordered set of finite totally ordered sets $X_n$ of $n$-simplices in each dimension $n$, up to the maximal dimension.
    \begin{itemize}
        \item[--]
        In practice, a $\Delta$-set $X$ is a list of lists, with each inner list corresponding to $X_n$ in each index $n$. Each $X_n$ list consist of a list of $n$-simplices. The $n$-simplices being a list of position indexes in the $X_{n-1}$ list ordered by their image under face maps $d^{n-1}_i$ for $i=0,\dots,n$. However, we note that it will only be strictly necessary to hold consecutive $X_n$ and $X_{n+1}$ in memory for each computational step of the algorithms set out in this section.
    \end{itemize}
    \item 
    A filtered $\Delta$-set consists of the structure of a $\Delta$-set provided above with each $n$-simplex additionally being assigned a filtration value in $\mathbb{R} \cup \{ -\infty,\infty\}$ satisfying the condition in Definition~\ref{def:FilteredSpace}. In practice, the filtration values of the filtered $\Delta$-set are stored as an additional final entry in each simplex list.
    \item
    We view a $k$-partial $(n+1)$-simplex of a $\Delta$-set $X$ similarly to an $(n+1)$-simplex, with the exception that we require only information on $d^n_{i}$ for $i=n+1-k,\dots,n+1$.
    \begin{itemize}
        \item[--]
        In practice, a $k$-partial $(n+1)$-simplex in $\Delta$-set $X$ consists of a list of $k$ indexes of $n$-simplices in the list corresponding to $X_n$. When $X$ is filtered, there is also an additional final list entry containing the filtration value coinciding with the maximal filtration value among the elements of $X_n$ indexed in the $k$-partial $(n+1)$-simplex.
    \end{itemize}
    \item 
    Let $C$ be a cell complex. For algorithmic purposes, a cell complex $C$ is a generalisation of a $\Delta$-set consisting of a finite totally ordered set of finite totally ordered sets $C_n$ of $n$-cells in each dimension $n$, up to the maximal dimension. Since we work with $\mathbb{Z}_2$ coefficients no coefficient information for each face will be required.
    \begin{itemize}
        \item[--]
        In practice, a cell complex $C$ consists in each dimension $n$ of a list $C_{n}$ of arbitrary length lists of indices of cells in list $C_{n}$ forming the boundary of the cell.
        However, we note that it will only be  necessary to hold consecutive $C_n$ and $C_{n+1}$ in memory for each computational step of the algorithms set out in this section. 
    \end{itemize}
    \item
    A filtered cell complex consists of the structure of a cell complex provided above with each $n$-cell additionally being assigned a filtration value in $\mathbb{R} \cup \{ -\infty,\infty\}$ greater than or equal to any of cell in its boundary. In practice, the filtration values of each filtered cell is stored as an additional final entry in the cell list.
\end{enumerate}

For filtered quivers, simplicial complexes, and $\Delta$-sets, we may identify the non-filtered object with a filtered objects in which all filtration value are set to be equal to $-\infty$. For simplicity, from now on we always assume that the unfiltered objects specified above correspond to filtered objects of such type.

\subsection{Computing persistent homology}\label{PersistentHomology}

Persistent homology can be obtained with recept to coefficient in any field. Let $(C_*,\partial_*)$ be a chain complex. Then the \emph{boundary matrix} of the differential $\partial_n \colon C_{n} \to C_{n-1}$ with recept to chosen totally ordered bases of $C_{n}$ and $C_{n-1}$ is the usual matrix representing the linear maps $\partial_n$. Each column being the image of each $C_{n}$ basis element expressed as a linear combination of the $C_{n-1}$ basis.

For a $\Delta$-set $X$ or similarly a cell complex more generally, the simplices $X_n$ in each dimension provide a canonical basis of each $C_n(X)$. Moreover, as made precise in Section~\ref{sec:Spaces}, abstract simplicial complexes can also be considered as $\Delta$-sets once a total order is chosen on their vertices.

To compute persistent homology up some dimension $n$, we form a \emph{filtered boundary matrix} with a row for every $k$-simplex and whose columns are in bijection with all the columns of boundary matrices $C_k(X)$ for $k=1,\dots,n+1$ and ordered by the filtration values of the corresponding simplices. Alongside the filtered boundary matrix we additionally record a vector containing the filtration values of each column. Once the filtered boundary matrix is obtained, standard procedures can be applied to efficiently obtain the persistent homology \cite{Bauer2021, Lutgehetmann2020}.

In practice, for speed of computation, persistent homology is almost always computed with respect to $\mathbb{Z}_2$ coefficients. Specifically, there are no signs to consider and operations can be made efficiently using binary arithmetic. Moreover, again for reasons of computation seed, the cohomology is usually computed rather then the homology \cite{Bauer2021}. However, the coboundary matrix can easily be obtained from the boundary matrix by taking the transpose. Therefore, for simplicity we discuses only bounder matrices in the reminder of the appendix and assume that all coefficients lie in $\mathbb{Z}_2$.

\subsection{Directed flag complex}\label{sec:AlgDirectedFlagComplex}

We now present an algorithm for computing the directed flag complex $\mathcal{F}(G)$ of a filtered quiver $G$. The directed flag complex was originally presented in Definition~\ref{def:DirectedFlag}. In the case of digraphs, an efficient algorithm for computing $\mathcal{F}(G)$ was provided in \cite{Lutgehetmann2020}. However, for the purposes of quivers more generally, we need to construct a more sophisticated procedure utilizing the structure of $\Delta$-sets not restricting ourselves to only abstract simplicial complexes. We note that during the computation of $(n+1)$-simplices from $n$-simplices, the algorithm presented keeps the simplices of the directed flag complex separated into lists of those obtained by extending an $n$-simplex by the same maximal vertex with respect to the total order induced by the corresponding singular simplex inclusion $\colon \Delta^{n} \to G$.

Given a filtered quiver $G$, the steps of the algorithm for acquiring the filtered $\Delta$-set $\mathcal{F}(G)$ are as follows. As there is no inclusion of quivers from $\Delta^1$ onto a loop, we may additionally assume that $G$ has no loops throughout the procedure.
\begin{enumerate}[(1)]
    \item 
    Remove all loops from $G$, set the vertices of $\mathcal{F}(G)$ to be the vertices of $G$ and the edges of $\mathcal{F}(G)$ to be the edges of $G$.
    \item 
    Obtain the set of $1$-simplices $\mathcal{F}(G)_1^v$ with maximal vertex $v$ for each $v \in V_G$ as the sets of edges $e\in E_G$ such that $t(e) = v$.
    \item 
    For each $n \geq 1$ and $v\in V_G$, obtain the $(n+1)$-simplices $\mathcal{F}(G)_{n+1}^v$ of ${\mathcal{F}}(G)$ with maximal vertex $v\in V_G$ by considering in turn each $n$-simplex $s_{n+1}^{n} \in {\mathcal{F}}(G)_n$. Given an $n$-simplex $s_{n+1}^{n}$, an element of $\mathcal{F}(G)_{n+1}^v$ is obtained from any sequence of $n$-simplices $s_0^{n},\dots,s_{n+1}^{n} \in \mathcal{F}(G)_{n}^v$ such that $s_0^{n},s_1^{n},\dots,s_{n+1}^{n}$ provides a well defined boundary for an $(n+1)$-simplex satisfying equation~\eqref{eq:FaceMapConditions}. The filtration value of each new simplex is the maximum of the filtration values of $s_0^{n},\dots,s_{n+1}^{n}$. Further details on this step of the procedure are provide in Algorithm~\ref{alg:DirectedFlagComplex}.
    \item 
    The procedure terminates when, either each $S^v_{n+1}$ is empty or the maximal desired $(n+1)$-skeleton of $\mathcal{F}(G)$ for computation of persistent homology up to dimension $n$ has been obtained.
\end{enumerate}
To obtain each $\mathcal{F}(G)_n$ in full, set
\[
    \mathcal{F}(G)_n = \cup_{v \in V_G} \mathcal{F}(G)_n^v.
\] 

The precise procedure for the central step (step (3) above) is detailed below in Algorithm~\ref{alg:DirectedFlagComplex}. In particular, the extensions of $s_{n+1}^{n}$ by a vertex $v$ to an $(n+1)$-simplex is further broken down into an inductive procedure detailed separately in Algorithm~\ref{alg:ExtendSimplex}. This procedure, begins with $s_{n+1}^{n}$ as a $0$-partial $(n+1)$-simplex. Then at each inductive step, all possibilities to extend to $k$-partial $(n+1)$-simplices for $k=1,\dots,n+1$ are considered. More precisely, the $(k+1)$-partial $(n+1)$-simplices are constructed by identifying each possible subsequent $s^n_{n-k} \in \mathcal{F}(G)_n^v$ satisfying
\begin{equation}\label{eq:PartailSimplexRelations}
    d_{n-k}^{n-1}(s_{j}^n) = d_{j-1}^{n-1}(s_{n-k}^n)
\end{equation}
for $j=n-k+1,\dots,n+1$. The entire sequence of faces satisfies equation~\eqref{eq:FaceMapConditions} through compatibility with the previously constructed simplices of the $k$-partial $(n+1)$-simplex. The procedure terminates after the the $k=n$ induction step. 

\begin{algorithm}
    \caption{Algorithm to be recursive applied begging with an $n$-simplex $s_{n+1}^n \in \mathcal{F}(G)_n$ and vertex $v \in V_G$ of a quiver $G$ in order to obtain all $(n+1)$-simplices of the flag complex $\mathcal{F}(G)$ with $s_{n+1}^n$ as a face and maximal vertex $v$ when induced by the corresponding singular simplex inclusion $\colon \Delta^{n+1} \to G$. This is achieved by sequentially constructing $(k+1)$-partial $(n+1)$-simplices from each $k$-partial $(n+1)$-simplex with image $s_{n+1}^n$ under $d^n_0$ and maximal vertex $v$ for $k=1,\dots,n+1$.
    All data structures used are set out in Section~\ref{sec:DataStructures} and simplices are assumed to be assigned a filtration value of $-\infty$ by default if not otherwise specified.}
    \label{alg:ExtendSimplex}
    \begin{algorithmic}
        \Input
            \Desc{$n$}{\;\;\;\;\;\;\;\;\;\;\;\;\; integer dimension of input simplices greater than $0$}
            \Desc{$k$}{\;\;\;\;\;\;\;\;\;\;\;\;\; integer greater than or equal to zero and less than $n+1$}
            \Desc{$\mathcal{F}(G)_n^v$}{\;\;\;\;\;\;\;\;\;\;\;\;\; set of $n$-simplices in $\mathcal{F}(G)$ with maximal vertex $v$}
            \Desc{$\mathcal{F}(G)_{n+1,k+1}^v$}{\;\;\;\;\;\;\;\;\;\;\;\;\; set of $k$-partial $(n+1)$-simplices in $\mathcal{F}(G)$ with maximal vertex $v$}
        \EndInput
        \Output
            \Desc{$\mathcal{F}(G)_{n+1,k+1}^v$}{\;\;\;\;\;\;\;\;\;\;\;\;\;\;\;\; set of $(k+1)$-partial $(n+1)$-simplices in $\mathcal{F}(G)$ with maximal vertex $v$}
        \EndOutput
        \Procedure{Extend to simplex}{$\mathcal{F}(G)_n^v$, $\mathcal{F}(G)_{n+1,k+1}^v$, $n$, $k$}
            \State $\mathcal{F}(G)_{n+1,k+1}^v \gets \emptyset$
            \For{each $S = \{ s^n_{n-k+1},\dots,s^n_{n+1} \} \in \mathcal{F}(G)_{n+1,k+1}^v$}
                \For{each $s^n_{n-k}\in \mathcal{F}(G)_n^v$}
                    \If{$d_{n-k}^{n-1}(s^n_j)=d^{n-1}_{j-1}(s^n_{n-k})$ for $j=n-k+1,\dots,n+1$}
                        \State $\mathcal{F}(G)_{n+1,k+1}^v \gets \mathcal{F}(G)_{n+1,k+1}^v \cup \{S \cup \{ s^n_{n-k} \}\}$
                        \State(where $\{S \cup \{ s^n_{n-k} \}\}$ is assigned the maximum filtration value of $S$ and $s^n_{n-k}$)
                    \EndIf
                \EndFor
            \EndFor
            \Return $\mathcal{F}(G)_{n+1,k+1}^v$
        \EndProcedure
    \end{algorithmic}
\end{algorithm}

\begin{algorithm}
    \caption{Algorithm for obtaining the simplices of the directed flag complex $\mathcal{F}(G)$ of a quiver $G$ in dimension $n+1$ from those in dimension $n$. In particular, the procedure makes use of the \emph{Extend simplex} function provided in Algorithm~\ref{alg:ExtendSimplex}. For the purposes of computation of the next dimensional simplices, simplices are partitioned by their maximal vertex with respect to the total order on the their vertices induced by the corresponding singular simplex inclusion $\colon \Delta^{n} \to G$. All data structures used are set out in Section~\ref{sec:DataStructures} and simplices are assumed to be assigned a filtration value $-\infty$ by default if not otherwise specified.}
    \label{alg:DirectedFlagComplex}
    \begin{algorithmic} 
        \Input
            \Desc{$n$}{\;\;\;\;\;\;\;\;\;\;\;\;\;\;\;\;\;\; integer dimension of input simplices greater than $1$}
            \Desc{$V_G$}{\;\;\;\;\;\;\;\;\;\;\;\;\;\;\;\;\;\; vertex set of quiver $G$}
            \Desc{$\{\mathcal{F}(G)_n^v\}_{v\in V_G}$}{\;\;\;\;\;\;\;\;\;\;\;\;\;\;\;\;\;\; set of sets of $n$-simplices in $\mathcal{F}(G)$ with greatest vertex $v\in V_G$}
        \EndInput
        \Output
            \Desc{$\{\mathcal{F}(G)_{n+1}^v\}_{v\in V_G}$}{\;\;\;\;\;\;\;\;\;\;\;\;\;\;\;\;\;\;\;\;\; sets of sets of $(n+1)$-simplices in $\mathcal{F}(G)$ with greatest vertex $v\in V_G$}
        \EndOutput
        \Procedure{Directed flag complex}{$V_G$, $\{\mathcal{F}(G)_n^v\}_{v\in V_G}$, $n$}
            \For{each $v \in V_G$}
                \State $\mathcal{F}(G)_{n+1}^v \gets \emptyset$
                \For{each $u \in V_G \setminus \{v\}$}
                    \For{each $s_{n+1}^n \in \mathcal{F}(G)_{n}^u$}
                        \State $S^{v}_{n+1} \gets \{ \{s^n_{n+1}\} \}$ (with the filtration value of $s_{n+1}^n$)
                        \For{$k=0,\dots,n$}
                            \State $S^{v}_{n+1} \gets$ \Call{Extend to simplex}{$\mathcal{F}(G)_n^v$, $S^{v}_{n+1}$, $n$, $k$}
                        \EndFor
                    \EndFor
                    $\mathcal{F}(G)_{n+1}^v \gets \mathcal{F}(G)_{n+1}^v \cup S^{v}_{n+1}$
                \EndFor
            \EndFor
            \Return $\{\mathcal{F}(G)_{n+1}^v\}_{v\in V_G}$
        \EndProcedure
    \end{algorithmic}
\end{algorithm}

We remark that as each simplex is stored as the indices of the simplices in one dimension lower, the filtered boundary matrix can be obtained immediately from the data structure of the $\Delta$-set. This means that filtered boundary matrix could be computed directly during the construction of $\mathcal{F}(G)$ without the need for an additional computational step.

It should be also noted that in the case of digraphs in dimension $1$, the procedure proposed above coincides with that of \cite{Lutgehetmann2020}. However, in higher dimensions under conditions when there are considerably less simplices than edge of $G$, it would be more efficient to use the procedure above as opposed to the one from \cite{Lutgehetmann2020}.

\subsubsection{Computational complexity}\label{sec:ComplexityDirectedFlag}

The time complexity of step (1) of the computation of the directed flag complex $\mathcal{F}(G)$ detailed above is $\mathcal{O}(|V_G|)$ and step (2) is $\mathcal{O}(|E_G|)$. The part of the procedure with the greatest time complexity is step (3), which can be described as follows.

Recall that we denote by $\mathcal{F}(G)_n$ the set of $n$-dimensional simplices in $\mathcal{F}(G)$. We note that $|\mathcal{F}(G)_1| \leq |E_G|$, as $E_G$ can contain loops. Moreover, for each $v\in V_G$ we denote by $\mathcal{F}(G)_n^v$ the set of $n$-dimensional simplices in $\mathcal{F}(G)$, whose greatest vertex with respect to the total order on the their vertices induced by the corresponding singular simplex inclusion $\colon \Delta^{n} \to G$, is $v$. In particular, $|\mathcal{F}(G)_n|=\sum_{v\in V_G}|\mathcal{F}(G)_n^v|$.

The initial spitting of the edges of $G$ by terminal vertices has time complexity $\mathcal{O}(|V_G||\mathcal{F}(G)_1|)$. For the computation of $(n+1)$-simplices of $\mathcal{F}(G)$ for $n\geq 1$ at each iteration of step (3), we initially consider each pair of a vertex $v\in V_G$ and $x\in \mathcal{F}(G)_{n}$. For each such pair, we then check the compatibility of simplices in $\mathcal{F}(G)_n^v$ in turn to obtain $k$-partial $(n+1)$-simplices in sequence. In particular, there are at most 
$\frac{(n+1)(n+2)}{2}$
steps in total to verify the conditions in equation~\eqref{eq:PartailSimplexRelations} for each $(n+1)$-simplex obtained. In addition, the number of $k$-partial $(n+1)$-simplices is bounded above by $|\mathcal{F}(G)_n^v|\cdots \left(|\mathcal{F}(G)_n^v|-k+1 \right)$. Therefore, the total time complexity is bounded above by
\begin{equation}\label{eq:FlagComplexComplexity}
    \mathcal{O}\left(
    \frac{(n+1)(n+2)}{2}
    |\mathcal{F}(G)_{n}| \sum_{v\in V_G}
    \max(1,|\mathcal{F}(G)_n^v|)
    \sum_{k=0}^{n} \prod_{i=1}^k \max(1,|\mathcal{F}(G)_n^v|-i+1)  \right).
\end{equation}
However, we note that the $|\mathcal{F}(G)_n^v|\cdots(|\mathcal{F}(G)_n^v|-k+1)$ bound on the number of $k$-partial $(n+1)$-simplices is not sharp and could be greatly improved. For example, if $s_1\neq s_2 \in \mathcal{F}(G)_{n}^v$ and $d^{n-1}_0(s_1)=d^{n-1}_0(s_2)$, then $s_1$ and $s_2$ cannot both be contained in the same $k$-partial $(n+1)$-simplex for any $k=0,\dots,n+1$. Nevertheless, the term demonstrates the heavy dependence of the time complexity on the number of simplices in each dimension, particularly as the dimension grows.

\subsection{Reduced directed flag complex}\label{sec:AlgReducedDirectedFlagComplex}

In this subsection we detail an algorithm for computing the reduced directed flag complex $\bar{\mathcal{F}}(G)$ of a  filtered quiver $G$. The reduced directed flag complex was originally presented in Definition~\ref{def:ReducedDirectedFlagComplex}. By Proposition~\ref{prop:StrongMultiEdgeContraction}, to obtain the abstract simplicial complex $\bar{\mathcal{F}}(G)$ we need only consider the reduced digraph $\bar{\mathcal{R}}(G)$ of a quiver $G$. That is we may first obtain a digraph from $G$ by removing all loops and duplicate multiple edges. The algorithm presented is similar to that from \cite{Lutgehetmann2020} for the directed flag complex $\mathcal{F}(G)$ of a digraph, with the additional step of remove duplicate simplices on the same set of vertices after computing each set of $n$-simplices.

Given a filtered quiver $G$, the steps of the algorithm for acquiring the filtered simplicial complex $\bar{\mathcal{F}}(G)$ are as follows.
\begin{enumerate}[(1)]
    \item 
    Obtain $\bar{\mathcal{R}}(G)$ from $G$ by removing loops and reducing all multiple edges to a single edge, keeping the minimal filtration value among multiple edges on the same ordered pair of vertices.
    \item
    Set the $0$-simplices of $\bar{\mathcal{F}}(G)$ to be the vertices $V_G$ of $G$. Add a $1$-simplex to $\bar{\mathcal{F}}(G)$ between vertices $u,v \in V_G$ whenever there is an edge $u \to v \in E_G$ or $v \to u \in E_G$. With the filtration value of the edge being the minimal filtration value among edges $u\to v$ or $v \to u$.
    \item 
    For each $n \geq 1$, obtain the $(n+1)$-simplices of ${\mathcal{F}}(G)$ by considering in turn each vertex $v\in V_G$ and each $n$-simplex $s_n$ from ${\mathcal{F}}(G)$ as follows. If there is a directed edge in $\bar{\mathcal{R}}(G)$ from each vertex of $s_n$ to $v$, then add the simplex extending $s_n$ by $v$ to the $(n+1)$-simplices of ${\mathcal{F}}(G)$. The filtration value of the new simplex is the maximum of the filtration values among $s_n$ and the edges from vertices of $s_n$ to $v$.
    \item
    Remove any duplicate $n$-simplices from the filtered simplicial complex $\bar{\mathcal{F}}(G)$ retaining the lowest filtration values.
    \item 
    The inductive step of the procedure terminates when, either there are no new $(n+1)$-simplices added to $\bar{\mathcal{F}}(G)$ or the maximal desired $(n+1)$-skeleton of $\bar{\mathcal{F}}(G)$ for computation of persistent homology up to dimension $n$ has been obtained.
    \item
    Remove any duplicate $(n+1)$-simplices from the filtered simplicial complex $\bar{\mathcal{F}}(G)$ retaining the lowest filtration values.
\end{enumerate}
We note that due to the fact that an $(n+1)$-simplex of $\bar{\mathcal{F}}(G)$ on the same set of vertices might not be obtainable from any particular $n$-simplex and vertex $v\in V_G$, the duplicate $n$-simplices cannot be removed until all $(n+1)$-simplices have been obtained. The precise algorithm for the central step (step (3) above) of the produce is detailed in \cite{Lutgehetmann2020}.

\subsubsection{Computational complexity}\label{sec:ComplexityReducedFlag}

Recall that we denote by $\mathcal{F}(G)_n$ the set of $n$-dimensional simplices in $\mathcal{F}(G)$. Moreover, for each $v\in V_G$ we denote by $\mathcal{F}(G)_n^v$ the set of $n$-dimensional simplices in $\mathcal{F}(G)_n$, whose greatest vertex with respect to the total order on the their vertices induced by the corresponding singular simplex inclusion $\colon \Delta^{n} \to G$, is $v$. In particular, $|\mathcal{F}(G)_n| = \sum_{v\in V_G}|\mathcal{F}(G)_n^v|$.

The time complexity of steps (1) and (2) in the computation of the reduced directed flag complex $\bar{\mathcal{F}}(G)$ detailed above is $\mathcal{O}(|E_G|^2)$. Steps (4) and (6) have time complexity $\mathcal{O}(|\mathcal{F}(\bar{\mathcal{R}}(G))^2_n|)$ in any given dimension $n$. The part of the procedure with typically the greatest time complexity is step (3), which can be described as follows.

Given $v\in V_G$, denote by $E_v^t$ the set of edge $e\in \bar{\mathcal{R}}(G)$ such that $t(e) = v$. Initially spitting edges into sets $E_v^t$ has time complexity $\mathcal{O}(|V_G||\mathcal{F}(G)_1|)$. For the computation of $n$-simplices of $\mathcal{F}(\bar{\mathcal{R}}(G))$ at each iteration of step (3), we initially consider each pair of a vertex $v\in V_G$ and $n$-simplex $x\in \mathcal{F}(\bar{\mathcal{R}}(G))_{n-1}$. For each such pair, we then check the compatibility of the simplices with edges $E_v^t$ in turn, for which there are at most $n$ steps in total to check the existence of an edges from a vertex of the simplex to $v$. Therefore, the total time complexity is bounded above by
\begin{equation}\label{eq:ReducedFlagComplexity}
    \mathcal{O}\left(n |\mathcal{F}(\bar{\mathcal{R}}(G))_{n}| \sum_{v\in V_G}\max(1, |E^t_v|)  \right).
\end{equation}
Equation~\eqref{eq:ReducedFlagComplexity} can be applied to demonstrate the significant improvement realised when obtaining $H_*^{\Delta,m}(G)$ as the homology of $\bar{\mathcal{F}}(G)$. An algorithm for providing directly the generators of $C_*^{\Delta,m}(G)$ can be obtained from the procedure above by modifying step (3) to check if there is a directed edge in $G$ rather than $\bar{\mathcal{R}}(G)$ from each vertex of $s_n$ to $v$ or that the vertex of $s_n$ is equal to $v$. That is, as opposed to just checking for the existence of the edge.

In this case, due to additionally allowing $s_n$ vertices that are not distinct, within each $n$-simplex of $\mathcal{F}(G)_i$ there are an additional $\binom{n-1}{n-i}$ $i$-simplices lying within the $n$-simplex for $i=0,\dots,n$. Given $v \in V_G$, denote by $C_n^{\Delta,m}(G)_v$ the submodule of $C_n^{\Delta,m}(G)$ whose basis of singular $n$-simplices have greatest vertex $v$ with respect to the total order on the their vertices induced by the corresponding singular simplex $\colon \Delta^{n} \to G$. Then for each $v\in V_G$, we have
\begin{equation}\label{eq:MappingFlagComplexity}
    \text{rank}(C_n^{\Delta,m}(G)_v) = 1 + \sum^{n}_{i=1} \binom{n-1}{n-i} |\mathcal{F}(G)^v_i|
\end{equation}
where the additional first term $1$ corresponds to the singular $n$-simplex all whose vertices are $v$. When performing the direct $C_*^{\Delta,m}(G)$ computation we replace the number of $n$-dimensional simplices $\mathcal{F}(G)_{n}$ by $\text{rank}(C_n^{\Delta,m}(G)) = \sum_{v \in V_G} \text{rank}(C_n^{\Delta,m}(G)_v)$.
Therefore, as the computational complexity in equation~\eqref{eq:ReducedFlagComplexity} depends on a multiple of $|\mathcal{F}(\bar{R}(G))_{n}|$ the direct algorithm for generators of $C_*^{\Delta,m}(G)$ is prohibitively slower for any digraph or quivers that contains many cliques.

\subsection{Partial directed flag complex}

In this subsection we present as a combination of the procedures for the directed flag complex $\mathcal{F}(G)$ and reduced directed flag complex $\bar{\mathcal{F}}(G)$ provided in Sections~\ref{sec:AlgDirectedFlagComplex}~and~\ref{sec:AlgReducedDirectedFlagComplex}, an algorithm for computing the partial directed flag complex $\tilde{\mathcal{F}}(G)$ of a filtered quiver $G$. The construction of the partial directed flag complex was originally provided in Section~\ref{sec:PartialFlagComplex}. 

This is achieved by treating simplices in the flag complex $\mathcal{F}(G)$ as certain joins of a simplex in the full subquiver $G_{l}$ of $G$ on the vertices with loops and a simplex in the full subquiver $G_{nl}$ of $G$ on the vertices that do not have loops. The advantage of this strategy is that it enables the computation of $\tilde{\mathcal{F}}(G)$ to be reduced to the computation of the two sub-$\Delta$-sets corresponding to each full subquiver.

\begin{remark}\label{rmk:JoinSimplexStructure}
    More precisely, for the inductive stage of the algorithm the simplices are recorded as a triple $(S,S_{nl},S_{l})$ containing
    \begin{enumerate}[(i)]
        \item 
        $S$ the simplex as if it lay in $\mathcal{F}(G)$,
        \item
        $S_l$ the index of the sub-simplex of $S$ in $\mathcal{F}(G_{nl})$,
        \item
        and $S_{l}$ the vertices of the sub-simplex of $S$ on vertices with loops as if it were a simplex in $\bar{\mathcal{F}}(G_l)$.
    \end{enumerate}
\end{remark}

When computing the boundary matrix, the structure above allows us to detect duplicate simplices of $\tilde{\mathcal{F}}(G)$ in $\mathcal{F}(G)$ by checking when the non-loop simplex and loop simplex vertices agree.

Similarly to the use of $\bar{\mathcal{R}}(G)$ in the previous section, we may apply Theorem~\ref{thm:StrongHMultiEdgeContractionBetweenLoops} and reduce complexity by performing the procedure on the smaller quiver $\tilde{\mathcal{R}}(G)$ rather than $G$ directly. That is, we can first remove all duplicate multiple edges form $G$ that have a loop prior to applying the main part of the procedure.
 
The available efficient procedures for computing persistent homology, take a cell complex as input. Therefore, in order to be compatible we must ensure the output of the present algorithm takes this form. As the maximal subquiver of $G$ on vertices that have a loop changes with the sublevel sets of the filtration, we are required in certain situations to add addition filtered cells extending a subcomplex of $\mathcal{F}(G^t)$ to a cell complex with $\tilde{\mathcal{F}}(G^t)$ as a deformation retraction for each $t\in \mathbb{R}$ and sublevel set $G^t$.

Given a filtered quiver $G$, the steps of the algorithm for obtaining the filtered $\Delta$-set $\tilde{\mathcal{F}}(G)$ are as follows. As with the algorithms for $\mathcal{F}(G)$ and $\bar{\mathcal{F}}(G)$, the procedure terminates when, either no new $(n+1)$-simplices are obtained or the maximal desired $(n+1)$-skeleton of $\mathcal{F}(G)$ for computation of persistent homology up to dimension $n$ has been acquired.

\begin{enumerate}[(1)]
    \item 
    Identify all vertices of sub-quivers $G_l$ and $G_{nl}$ and their filtration values. 
    \item 
    Remove all multiple edges that have a loop with a filtration value smaller then the edge filtration value itself, leaving at least one edge. When only one edge remains, it is assigned the smallest filtration value among the multiple edges between the same ordered pair of vertices. Then remove all loops from $G$.
    \item
    Set the $0$ simplices of $\tilde{\mathcal{F}}(G)$ and $\mathcal{F}(G)$ to be the vertices of $G$ and the $1$-simplices of $\mathcal{F}(G)$ to be the remaining edges of $G$.
    \item
    For each $n \geq 1$, obtain the $(n+1)$-simplices of $\mathcal{F}(G)$ in the same way as step (3) of the procedure in Section~\ref{sec:AlgDirectedFlagComplex} using Algorithm~\ref{alg:DirectedFlagComplex}.
    In addition, during the procedure record for each simplex $S$ the set of vertices $S_l$ of each simplex that has a loop and the index (if any) of the last vertex $S_{nl}$ without a loop from which $S$ is obtained as an extension. In the case when $S_l$ is empty, store in $S_{nl}$ the index of the simplex itself.
    \item 
    For each $n \geq 2$ and simplex triple $(S,S_{nl},S_{l})$ in dimension $n$, recover inductively all indices (if any) of the maximal non-loop face. When $S_l$ is empty this is already the case. Otherwise, this is achieved by replacing $S_{nl}$ with the corresponding value in the simplex in the image of the face map of index presently recorded in $S_{nl}$. 
    \item 
    For each $n \geq 1$, repeat the following inductive steps.
    \item
    For each $\{v_0,\dots,v_n\} \subseteq V_G$, compute any required additional cells between $n$-simplices by applying the following steps.
    \begin{enumerate}[(i)]
        \item 
        For each set $\{S^1,\dots,S^m\}$ of $n$-simplices such that $S^1,\dots,S^m$ have vertices $v_0,\dots,v_n$ and
        \[
            S^i_{l} = S^j_{l} \;\;\; \text{and} \;\;\; S^i_{nl} = S^j_{nl}
        \]
        for $i,j = 1,\dots,m$, consider the following labeled complete graph on $m$ vertices. The vertices of the graph are $S^1,\dots,S^m$. An edge between $S^i$ and $S^j$ for $i \neq j$ is assigned the maximum value among 
        \begin{enumerate}[(I)]
            \item 
            the filtration value of $S^i$, 
            \item
            the filtration value of $S^j$,
            \item
            when $n=1$ the minimum filtration value any loop at the vertices of $S^i$ and $S^j$, or 
            \item
            when $n \geq 2$ the minimum filtration value of a sequences of extra cell obtained in dimension-$(n-1)$ consecutively sharing a face such that the first cell in the sequence shares a face with $S^i$ and the last with $S^j$.
        \end{enumerate}
        We denote the possibly empty sequence from part (IV) above by $E^{n-1}_{i,j}$.
        \item 
        Find a minimally edge weighted spanning tree of the labeled graph decried above. This can be achieved by applying a standard procedure such as Prim’s Algorithm.
        \item 
        For each edge between a pair $S^i$ and $S^j$in the spanning tree, form an extra cell with boundary $S^i$,  $S^j$, and all elements of $E^{n-1}_{i,j}$, with filtration value identical to the edge weight.
    \end{enumerate}
    \item
    Add $n$-simplices and any additional cells obtained as part of the $(n-1)$-dimensional step to $\tilde{\mathcal{F}}(G)_n$.
\end{enumerate}

Once $\tilde{\mathcal{F}}(G)$ has been obtained by the procedure above, its filtered boundary matrix can be constructed
directly from the indices and filtration values stored in each simplex or cell of $\tilde{\mathcal{F}}(G)$.

\subsubsection{Computational complexity}

The time complexity of steps (1) and (2) of the computation of the partial directed flag complex $\tilde{\mathcal{F}}(G)$ detailed above are bounded above by $\mathcal{O}(|E_G|^2)$. The time complexity of step (4) in each dimension $n$ is identical to that of the directed flag complex $\mathcal{F}(G)$ (after the reduction of $G$ made in step (2)) provided in Appendix~\ref{sec:ComplexityDirectedFlag} equation~\eqref{eq:FlagComplexComplexity}. Steps (5) and (8) have time complexity $\mathcal{O}(|\tilde{\mathcal{F}}(G)_n|)$ in each dimension $n$. The part of the procedure that might typically have greater time complexity is step (7), which for each $n\geq 1$ can be described as follows.

We first note that the main loop in step (7) is over subsets of vertices $\{v_0,\dots,v_n\} \subseteq V_G$, for which there are $\binom{n+1}{|V_G|}$ choices. Let $E_n^{\{ v_0,\dots,v_n \}}$ denote the set of $n$-simplices in $\tilde{F}(G)_n$ on the vertex set $\{ v_0,\dots,v_n \}$ and let $E_{n}^C$ denote the set of extra cell obtained at the $(n-1)$-dimensional step. For part (i) of step (7), the worst case time complexity is achieved when $m = |E_n^{\{ v_0,\dots,v_n \}}|$. In this case for part (i) of step (7), there are $\binom{2}{|E_n^{\{v_0,\dots,v_n\}}|}$ edges in the compete graph, and to label each edge we must consider the each element of $E_{n}^C$. Finally, part (ii) of step (7) is applied to a complete graph and has know complexity of order the number of its edge, which in our case is $\binom{2}{|E_n^{\{ v_0,\dots,v_n \}}|}$ and less complex than the previous step. Therefore, the total time complexity is bounded above by
\[
    \mathcal{O}\left( \left|E_n^C\right| \sum_{\{v_0,\dots,v_n\} \subseteq V_G} \max\left(1,  \binom{2}{\left|E_n^{\{ v_0,\dots,v_n \}}\right|} \right)  \right).
\]

Similarly to the end of Section~\ref{sec:ComplexityReducedFlag}, we can apply our time complexity to quantify the improvement realised when obtaining $H_*^{\Delta,h}(G)$ directly as opposed to the homology of $\tilde{\mathcal{F}}(G)$. An algorithm for computing directly the generators of $C_*^{\Delta,h}(G)$ can be obtained from the procedure set out in this section by retaining all multiple edges and loops in step (2) and skipping step (7). This is because the extra relations represented by the extra cells are incorporated by the additional simplices formed by singular simplex homomorphisms using loop edges. We demonstrate now that in the $C_*^{\Delta,h}(G)$ case, the advantage of skipping step (7) is typically considerably outweighed by the increase in complexity of step (4).

More precisely, let $n \geq 0$ be an integer and suppose there is a singular $i$-simplex inclusion on vertices vertices $v_0,\dots,v_i \in V_G$, $m$ of which have loops for some $i=0,\dots,n$. Then there are $\binom{m+n-i-1}{n-i}$ $n$-simplices with vertices $v_0,\dots,v_i$ appearing as generators of $C_n^{\Delta,h}(G)$ that are not singular simplicial generators of $C_n^{\Delta,i}(G)$.
An expression similar to equation~\eqref{eq:MappingFlagComplexity} can now be derived for the total number of additional simplices required when computing the generators of $C_n^{\Delta,h}(G)$ as apposed to simplices of $\mathcal{F}(G)_n$.

Recall that $\mathcal{F}(G)_n^v$ denotes the set of $n$-dimensional simplices in $\mathcal{F}(G)$, whose greatest vertex with respect to the total order on vertices induced by the corresponding singular simplex inclusion $\colon \Delta^{n} \to G$, is $v\in V_G$.
The computational complexity for step (4) in computing $C_n^{\Delta,h}(G)$ given in equation~\eqref{eq:FlagComplexComplexity} depends heavily on a multiple of $|\mathcal{F}(G)_{n}^v|$.
Therefore, when the values of each $|\mathcal{F}(G)_n^v|$ is replaced by the corresponding multiple
detailed above, the direct algorithm for generators of $C_*^{\Delta,h}(G)$ becomes prohibitively slower at step (4) for quivers containing many loops. 

\end{document}